\documentclass[review]{elsarticle}
\biboptions{sort&compress}

\usepackage{lineno,hyperref}

\usepackage{mathtools,amsfonts,amsthm,amssymb}
\usepackage{subcaption,float,xcolor}

\newtheorem{thm}{Theorem}[section]

\newtheorem{lem}[thm]{Lemma}

\newtheorem{cor}[thm]{Corollary}

\allowdisplaybreaks

\newtheoremstyle{hdef}{1em}{0em}{}{}{\bfseries}{.}{.5em}{\thmname{#1}\thmnumber{ #2}\thmnote{ (\hspace{-.01pt}{#3})}}
\theoremstyle{hdef}

\newtheorem{dfn}[thm]{Definition}

\newtheorem{rem}[thm]{Remark}

\makeatletter
\newtheoremstyle{premark}{1em}{0em}{
\addtolength{\@totalleftmargin}{1.5em}
\addtolength{\linewidth}{-1.5em}
\parshape 1 1.5em \linewidth}{}{\scshape}{.}{.5em}{}
\makeatother

\theoremstyle{premark}



\DeclareMathOperator{\dif}{d}

\DeclareMathOperator{\Var}{Var}


\newcommand{\bN}{{\mathbb N}}

\newcommand{\bR}{{\mathbb R}}

\newcommand{\fC}{{\mathfrak C}}

\newcommand{\bs}{{\backslash}}
\renewcommand{\ss}{\subset}
\renewcommand{\a}{\alpha}
\renewcommand{\b}{\beta}
\renewcommand{\c}{\gamma}







\modulolinenumbers[5]

\journal{Journal of \LaTeX\ Templates}









\bibliographystyle{elsarticle-num}

\begin{document}

\begin{frontmatter}

\title{Approximation of Stieltjes ordinary differential equations\tnoteref{mytitlenote}}
\tnotetext[mytitlenote]{The authors were partially supported by Xunta de Galicia, project ED431C 2019/02, 
and by project MTM2016-75140-P of MINECO/FEDER (Spain).}

\author{
 Francisco J. Fern\'andez\\
 \normalsize\emph{ e-mail:} fjavier.fernandez@usc.es \\
F. Adri\'an F. Tojo\\
\normalsize\emph{ e-mail:} fernandoadrian.fernandez@usc.es\\
\normalsize \textit{Instituto de Ma\-te\-m\'a\-ti\-cas, Facultade de Matem\'aticas,} \\ \normalsize\textit{Universidade de Santiago de Com\-pos\-te\-la, Spain.}
}





\begin{abstract}

This work is devoted to the obtaining of a new numerical scheme based on quadrature formulae for the 
Lebesgue-Stieltjes integral for the approximation of Stieltjes ordinary differential equations. This 
novel method allows us to numerically approximate models based on Stieltjes ordinary 
differential equations for which no explicit solution is known. We prove several theoretical results 
related to the consistency, convergence and stability of the numerical method. We also obtain 
the explicit solution of the Stieltjes linear ordinary differential equation, and use it to 
validate the numerical method. Finally, we present some numerical results that we have 
obtained for a realistic population model based on a Stieltjes differential equation.
\end{abstract}

\begin{keyword}
Stieltjes ordinary differential equation \sep Lebesgue-Stieltjes quadrature formulae 
\sep Predictor-corrector method
\MSC[2010] 34A36 \sep 28A25 \sep 65L20 \sep 65L70 
\end{keyword}

\end{frontmatter}

\linenumbers

\section{Introduction}

In this work we present a numerical method in order to approximate the solution of a Stieltjes differential equation of the type
\begin{equation}\label{eq:ec1}
\left\{\begin{array}{l}
x_g'(t)=f(t,x(t)),\; \text{for $g$-almost every $t\in [0,T)$}, \\
x(0)=x_0,
\end{array}\right.
\end{equation}
where, $x_g'$ is the Stieltjes derivative with respect to a left-continuous non decreasing function $g$. That is, given
$x:[0,T]\rightarrow \mathbb{R}$, we define, for each $t \in [0,T]\setminus C_g$, 
$x_g'(t)$ as the following limit in case it exists
\begin{equation}
u_g'(t):=\begin{dcases}
 \lim_{s\to t} \frac{x(s)-x(t)}{g(s)-g(t)}, & \text{if}\; t \notin D_g, 
\\ 
 \frac{x(t^+)-x(t)}{g(t^+)-g(t)}, & \text{if}\; t \in D_g,
\end{dcases}
\end{equation}
where $D_g$
denotes the set of discontinuities of $g$. In this particular case,
\begin{equation}
D_g=\{s \in \mathbb{R}:\; g(s^+)-g(s)>0\},
\end{equation} 
and
\begin{equation}
C_g=\{s \in \mathbb{R}:\; 
g \text{ is constant on } (s-\varepsilon,s+\varepsilon) \text{ for some } \varepsilon\in{\mathbb R}^+\}.
\end{equation}
While defining equation~\eqref{eq:ec1} for `$g$-almost every $t\in [0,T)$' we are implicitly considering the Lebesgue-Stieltjes measure space $([0,T],\mathcal{M}_g,\mu_g)$, where $\mathcal{M}_g$ is the $\sigma$-algebra and $\mu_g$ the measure constructed in an analogous fashion to the classical Lebesgue measure, where the length of $[a,b)$ is given by $\mu_g([a,b))=
g(b)-g(a)$. The interested reader may refer to \cite{POUSO2015} for details concerning this measure space. The theoretical study of this kind of derivatives and their applications appear, for instance, in
 \cite{POUSO2015,POUSO2017,POUSO2018,FRIGON2019,MaTo}. 

 \label{sh} As stated in \cite[Theorem 7.3]{POUSO2017}, in the case $g:[0,T]\rightarrow [0,\infty)$ is increasing, left-continuous and continuous at $0$, and $f:[0,T]\times \mathbb{R} \rightarrow \mathbb{R}$ satisfies
\begin{itemize}
\item[(H1)] $f(\cdot,x)$ is $g$-measurable for every $x \in \mathbb{R}$;
\item[(H2)] $f(\cdot,x_0)\in \mathcal{L}^1_g([0,T))$;
\item[(H3)] there exists $L\in \mathcal{L}^1_g([0,T),[0,\infty))$ such that for $g$-almost every $t \in [0,T)$ 
and every $x,y\in \mathbb{R}$ we have that
\begin{equation}
|f(t,x)-f(t,y)|\leq L(t) |x-y|;
\end{equation}
\end{itemize}
then, problem (\ref{eq:ec1}) has a unique solution in the space $\mathcal{BC}_g([0,T])$ of bounded $g$-continuous functions $u:[0,T]\rightarrow \mathbb{R}$, that is, the solution $u$ satisfies, for every $t_0\in[0,T]$,
\begin{equation}
\forall \epsilon>0,\; \exists \delta>0\,:\; [t \in [0,T],\; 
|g(t)-g(t_0)|<\delta]\Rightarrow |u(t)-u(t_0)|<\epsilon.
\end{equation} 
 $\mathcal{BC}_g([0,T])$ is a Banach space with the supremum norm --see \cite[Theorem~3.4]{POUSO2017}. Furthermore, the solution of problem~\eqref{eq:ec1} is the unique fixed point of the operator
\begin{equation}
\begin{array}{rcl}
F:\mathcal{BC}_g([0,T]) & \rightarrow & \mathcal{BC}_g([0,T]), \\ 
x & \rightarrow & F(x),
\end{array}
\end{equation}
where, given $t \in [0,T]$, 
\begin{equation}
F(x)(t)=x_0 + \int_{[0,t)} f(s,x(s))\, \dif\mu_g;
\end{equation}
that is, the solution of problem (\ref{eq:ec1}) is such that
\begin{equation} \label{eq:ec2}
x(t)=x_0+ \int_{[0,t)} f(s,x(s))\, \dif\mu_g,\; \forall t \in [0,T].
\end{equation}

Furthermore, from \cite[Lemma 7.2]{POUSO2017} and
\cite[Theorem 5.4]{POUSO2015}, the solution will belong to the space $\mathcal{AC}_g([0,T])$ of $g$-absolutely continuous functions, that is, of those functions $u:[0,T]\rightarrow \mathbb{R}$ such that, for every
$\epsilon>0$, there exists $\delta>0$ satisfying that, if $\{(a_n,b_n)\}_{n \in \mathbb{N}}$ is a collection of pairwise-disjoint open intervals such that
\begin{equation}
\sum_{n=1}^N |g(b_n)-g(a_n)|<\delta,
\end{equation}
then, 
\begin{equation}
\sum_{n=1}^N |f(b_n)-f(a_n)|<\epsilon.
\end{equation}
It is precisely expression (\ref{eq:ec2}) what motivates the approximation based on quadrature formulae for the Lebesgue-Stieltjes integral which we introduce in Section~\ref{FC}. We will see that, in order to obtain error bounds, it will be necessary to impose additional conditions on the regularity of the function $f$ and the solution of problem~\eqref{eq:ec1}.

In order to conveniently organize this work, in Section~\ref{FC} we obtain some 
numerical quadrature formulae for approximating the Lebesgue-Stieltjes integral, 
in Section~\ref{DNM} we present a predictor-corrector method based on the 
quadrature formulae obtained in Section~\ref{FC}. In Section~\ref{EA} we analyze mathematically
the consistency, convergence and stability of the numerical method derived in 
Section~\ref{DNM}. In order to validate the numerical method, in Section~\ref{GLE} we obtain 
the explicit solution of the general linear equation of Stieltjes type. Finally, in 
Section~\ref{NS}, we present some numerical results that we have obtained for 
the general linear equation and for a realistic silkworm population model 
based on a Stieltjes differential equation.

\section{Quadrature formulae for the Lebesgue-Stieltjes integral}\label{FC}

We now introduce some convenient notation. Given an increasing left-continuous function $g:[a,b]\rightarrow \bR$, 
we define $\Delta^+ g:[a,b)\rightarrow \bR$ as $\Delta^+ g(t)=g(t^+)-g(t)$. In the same way we define 
$\Delta^-h(t)=h(t^-)-h(t)$ whenever the left limit of $h$ exists at $t$. Clearly, $g$ is continuous at $t_0\in[a,b)$ 
if and only if $\Delta^+ g(t_0)=0$.
We have that
\begin{equation}0\le\sum_{t\in[a,b)}\Delta^+ g(t)\le g(b)-g(a),\end{equation}
so $g$ has a countable number of discontinuities, say those in $D_g=\{d_k\}_{k \in \Lambda}$, where $\Lambda\subset\bN$. If we define the bounded increasing function $g^B:[a,b)\to[0,+\infty)$ as
\begin{equation}
g^B(t)=\sum_{s\in[a,t)}\Delta^+ g(s)=\Delta^+ g(a) \chi_{(a,b]}(t)+
\sum_{k\in\Lambda}\Delta^+ g(d_k) \chi_{(d_k,b]}(t), 
\end{equation}
it is clear that $g^C:[a,b)\to\bR$, given by $g^C(t):=g(t)-g^B(t)$, is bounded, increasing and continuous. We say $g^C$ is the \emph{continuous part} of $g$ and $g^B$ is the \emph{jump part} of $g$.

As we foretold in the previous section, the numerical method we propose to approximate the solution of the differential problem 
\eqref{eq:ec1} in its integral form (\ref{eq:ec2}) will be based on the approximation of the Lebesgue-Stieltjes integral. We start this section by proving a result that will allow us to interpret the integral in (\ref{eq:ec2}) in terms of a Kurzweil-Stieltjes integral for which it will be possible to establish quadrature formulae.

\begin{lem}\label{lemfi} Let $g:[a,b]\rightarrow [0,\infty)$ be an increasing left-continuous function and $f \in \mathcal{L}^1_g([a,b))$. Then,
	\begin{equation}
	\int_{[a,b)} f\, \dif\mu_g = \int_a^b f(s) \dif g(s)= \int_a^b f(s) \dif g^C(s)+\sum_{s\in[a,b)}f(s)\Delta^+ g(s),
	\end{equation}
	where, in the right hand side, we consider a Kurzweil-Stieltjes integral. Furthermore, if $\{d_k\}_{k \in \Lambda}$ is the set of discontinuities of $g$ in $(a,b)$, we have that
	\begin{equation}
	\int_{[a,b)} f\, \dif\mu_g=\int_a^b f(s) \dif g^C(s)+ f(a) \Delta^+ g(a)+
	\sum_{k\in \Lambda}
	f(d_k)\Delta^+ g(d_k).
	\end{equation}

\end{lem}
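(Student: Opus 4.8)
The plan is to reduce the statement to a known correspondence between the Lebesgue-Stieltjes and the Kurzweil-Stieltjes integral, and then to evaluate the Kurzweil-Stieltjes integral against the jump part $g^B$ explicitly. Since $g$ is increasing it is of bounded variation, and by hypothesis $f \in \mathcal{L}^1_g([a,b))$; under these assumptions the two integrals $\int_{[a,b)} f \, \dif\mu_g$ and $\int_a^b f(s)\,\dif g(s)$ agree. I would invoke the standard correspondence theorem between these integrals (available in the Kurzweil-Stieltjes integration literature) to obtain the first equality directly. The half-open interval $[a,b)$ on the left and the endpoint conventions on the right are compatible precisely because $g$ is left-continuous, so no extra mass is picked up at $b$.

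For the second equality I would use the decomposition $g = g^C + g^B$ introduced immediately before the lemma, with $g^C$ increasing and continuous and $g^B$ the pure-jump part. By additivity of the Kurzweil-Stieltjes integral in the integrator (legitimate once each of the three integrals is known to exist), $\int_a^b f \, \dif g = \int_a^b f \, \dif g^C + \int_a^b f \, \dif g^B$, and it then remains to show $\int_a^b f \, \dif g^B = \sum_{s \in [a,b)} f(s)\,\Delta^+ g(s)$.

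The heart of the proof, and the step I expect to be the main obstacle, is this evaluation against the jump function $g^B$. Because $g$ is left-continuous we have $\Delta^- g \equiv 0$, so $g^B$ is a left-continuous step function whose only jumps are the right-jumps $\Delta^+ g(a)$ at $a$ and $\Delta^+ g(d_k)$ at each $d_k$. For finitely many jumps the identity $\int_a^b f \, \dif g^B = \sum f(\tau)\,\Delta^+ g(\tau)$ follows elementarily from the definition of the Kurzweil-Stieltjes integral by taking tagged partitions with sufficiently fine gauges. To pass to the countable case I would truncate, keeping only the $N$ largest jumps: this yields a step function $g^B_N$ with $g^B - g^B_N$ increasing and of total variation $\sum_{k>N}\Delta^+ g(d_k) \to 0$. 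The key estimate is $\sum_{s}|f(s)|\,\Delta^+ g(s) \le \int_{[a,b)}|f|\,\dif\mu_g < \infty$, which holds since $\mu_g(\{s\}) = \Delta^+ g(s)$ and the discontinuity points are atoms of $\mu_g$; this guarantees absolute convergence of the series and bounds both its tail and the integral $\int_a^b f \, \dif(g^B - g^B_N)$, so letting $N \to \infty$ gives the claim.

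Finally, the third displayed identity is a rewriting of the second: one isolates the contribution of the left endpoint $a$, producing the term $f(a)\,\Delta^+ g(a)$, and indexes the remaining interior discontinuities by $\{d_k\}_{k \in \Lambda}$, noting that every continuity point contributes $\Delta^+ g(s) = 0$ and so drops out, collapsing the sum over $[a,b)$ to the sum over $D_g \cap [a,b)$. An alternative route that avoids the Kurzweil-Stieltjes bookkeeping in the middle steps is to work entirely in the Lebesgue-Stieltjes picture, splitting $\mu_g = \mu_{g^C} + \mu_{g^B}$ into its non-atomic and purely atomic parts, evaluating the atomic integral directly as $\sum_s f(s)\,\mu_g(\{s\})$, and invoking the correspondence theorem only at the end for the continuous integrator $g^C$; this sidesteps the existence and additivity subtleties of the Kurzweil-Stieltjes integral.
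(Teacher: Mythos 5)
Your strategy lines up with the paper's proof only in its first half. For the first equality the paper does exactly what you propose: it invokes the correspondence theorem between the Lebesgue--Stieltjes and Kurzweil--Stieltjes integrals (\cite[Theorem~6.12.3]{ANTUNES2019}), splitting $\int_{[a,b)}=\int_{\{a\}}+\int_{(a,b)}$ and handling the endpoints just as you indicate: $\mu_g(\{a\})=\Delta^+ g(a)$ absorbs the correction at $a$, and left-continuity kills $\Delta^- g(b)$. For the second equality, however, the paper does not decompose anything by hand; it simply cites the known jump-decomposition theorem for Kurzweil--Stieltjes integrals (\cite[Theorem~6.3.13]{ANTUNES2019}), which already states $\int_a^b f\,\dif g=\int_a^b f\,\dif g^C+f(a)\Delta^+g(a)+f(b)\Delta^-g(b)+\sum_k f(d_k)\Delta g(d_k)$, with the $\Delta^-$ contributions vanishing by left-continuity. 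You instead try to re-derive that decomposition from scratch via $g=g^C+g^B$, finite truncation of the jumps, and a limit --- a genuinely different and more self-contained route.

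The one step in your main route that does not hold up as written is the passage to infinitely many jumps. You claim that the estimate $\sum_s|f(s)|\,\Delta^+g(s)\le\int_{[a,b)}|f|\,\dif\mu_g$ ``bounds the integral $\int_a^b f\,\dif(g^B-g^B_N)$''. Within the Kurzweil--Stieltjes framework the generally available bound against an integrator of bounded variation is $\|f\|_\infty\,\Var_a^b(g^B-g^B_N)$, and $f\in\mathcal{L}^1_g([a,b))$ need not be bounded; the assertion that the integral against the remainder break function equals (or is dominated by) the tail of the jump series is precisely the infinite-jump identity you are trying to prove, so as stated the step is circular --- and even the existence of $\int_a^b f\,\dif(g^B-g^B_N)$ requires justification before you may invoke additivity in the integrator. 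The clean repair is the alternative you yourself sketch in your final paragraph: $\mu_{g^B-g^B_N}$ is purely atomic with atoms $\{d_k\}_{k>N}$ and is dominated by $\mu_g$, so countable additivity gives $\int_{[a,b)}f\,\dif\mu_{g^B-g^B_N}=\sum_{k>N}f(d_k)\Delta^+g(d_k)$ outright, with absolute convergence supplied by your key estimate; applying the correspondence theorem separately to the integrators $g^B$ and $g^C$ (legitimate, since $f$ is integrable against each dominated measure) then transfers everything to the Kurzweil--Stieltjes side and licenses the additivity you need. Run that way, your argument is complete, and it buys something the paper's two-citation proof does not: it depends on the correspondence theorem alone rather than also on the book's decomposition theorem.
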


\begin{proof}
	This Lemma is an immediate consequence of \cite[Theorems 6.12.3 and 6.3.13]{ANTUNES2019}. That is, since $f$ is Lebesgue-Stieltjes integrable, by \cite[Theorem 6.12.3]{ANTUNES2019}, it is Kurzweil-Stieltjes integrable as well and, furthermore,
	\begin{equation}
	\begin{array}{rcl}
	\displaystyle
	\int_{[a,b)} f \, \dif\mu_g &=&
	\displaystyle
	\int_{\{a\}} f\, \dif\mu_g + \int_{(a,b)} f \, \dif\mu_g, \\ 
\displaystyle
\int_{(a,b)} f\,\dif\mu_g
	&=&\displaystyle \int_a^b f(s) \, \dif g(s)- f(a)\Delta^+ g(a)- f(b)\Delta^- g(b).
	\end{array}
	\end{equation}
	Now, due to the fact that $g$ is left-continuous, $\Delta^- g(b)=0$ and, since $\mu_g(\{a\})=\Delta^+ g(a)$, we obtain
	\begin{equation}
	\int_{[a,b)} f \, \dif\mu_g = \int_a^b f(s) \, \dif g(s).
	\end{equation}
	Finally, by \cite[Theorem 6.3.13]{ANTUNES2019},
	\begin{equation}
	\begin{aligned}
	\int_a^b f(s) \, \dif g(s)=& \int_a^b f(s) \, \dif g^C(s)+
	f(a)\Delta^+ g(a)+f(b) \Delta g^-(b)\\ &+\sum_{k\in\Lambda}f(d_k) \Delta g(d_k).
	\end{aligned}
	\end{equation}
	Since $g$ is left-continuous we have, in particular, that
 $\Delta^- g(b)=\Delta^- g(d_k)=0$ for every $k \in \mathbb{N}$, 
and the desired result follows.
\end{proof}

In Lemma~\ref{lem1} we will see that, under certain regularity hypotheses on $f$ and $g$, we can obtain error estimates for the quadrature formula for a point and the trapeze formula.

\begin{lem} \label{lem1} Let us assume $f \in BV([a,b])\cap \mathcal{L}^1_g([a,b))$ and 
	$g:[a,b]\rightarrow[0,\infty)$ is increasing and left-continuous. Furthermore, assume $g^C$ is
	\emph{$p$-$H$-Hölder} on $[a,b]$, that is,
	\begin{equation}
	|g^C(x)-g^C(y)| \leq H |x-y|^p,\; \forall x, \, y \in [a,b],
	\end{equation}
	where $H>0$ and $p \in (0,1]$. Then,
	\begin{equation}
	\begin{array}{c}
	\displaystyle
	\left| f(a)(g^C(b)-g^C(a)) +\sum_{s\in[a,b)}f(s)\Delta^+ g(s) -\int_{[a,b)} f\, \dif\mu_g 
	\right| \\
	\displaystyle
	\leq H (b-a)^p \Var_a^b f
	\end{array}
	\end{equation}
	and
	\begin{equation}
	\begin{array}{c}
	\displaystyle
	\left| \frac{f(a)+f(b)}{2}(g^C(b)-g^C(a))+\sum_{s\in[a,b)}f(s)\Delta^+ g(s) -\int_{[a,b)} f\, \dif\mu_g 
	\right| \\
	\displaystyle
	\leq H \left(\frac{b-a}{2}\right)^p \Var_a^b f.
	\end{array}
	\end{equation}
\end{lem}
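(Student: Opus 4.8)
The plan is to reduce both estimates to a pure ``continuous part'' error by invoking Lemma~\ref{lemfi}, and then to bound that error by the total variation of $f$ against the increasing, Hölder integrator $g^C$.

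First I would apply Lemma~\ref{lemfi} to rewrite
\[
\int_{[a,b)} f\, \dif\mu_g = \int_a^b f(s)\, \dif g^C(s) + \sum_{s\in[a,b)} f(s)\Delta^+ g(s).
\]
The crucial observation is that the jump sum $\sum_{s\in[a,b)} f(s)\Delta^+ g(s)$ occurs identically in each of the two quadrature expressions and in the integral, so it cancels. Consequently the one-point estimate reduces to bounding
\[
\left| f(a)\bigl(g^C(b)-g^C(a)\bigr) - \int_a^b f(s)\, \dif g^C(s)\right|,
\]
and the trapezoidal estimate reduces to bounding
\[
\left| \frac{f(a)+f(b)}{2}\bigl(g^C(b)-g^C(a)\bigr) - \int_a^b f(s)\, \dif g^C(s)\right|.
\]

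Next, since $g^C$ is continuous and increasing, $g^C(b)-g^C(a)=\int_a^b \dif g^C(s)$, so each leading term is itself a Kurzweil-Stieltjes integral of a constant. Writing each difference as a single integral, the one-point error becomes $\int_a^b \bigl(f(a)-f(s)\bigr)\,\dif g^C(s)$ and the trapezoidal error becomes $\int_a^b \bigl(\tfrac{f(a)+f(b)}{2}-f(s)\bigr)\,\dif g^C(s)$. I would then bound the integrands pointwise by the variation of $f$: since $f\in BV([a,b])$ one has $|f(a)-f(s)|\le \Var_a^s f\le \Var_a^b f$, and, using additivity of the variation over adjacent intervals,
\[
\left|\frac{f(a)+f(b)}{2}-f(s)\right| \le \tfrac{1}{2}\bigl(|f(a)-f(s)|+|f(b)-f(s)|\bigr) \le \tfrac{1}{2}\bigl(\Var_a^s f + \Var_s^b f\bigr) = \tfrac{1}{2}\Var_a^b f.
\]

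Finally, I would apply the elementary estimate $\bigl|\int_a^b \theta\,\dif g^C\bigr|\le \bigl(\sup_{[a,b]}|\theta|\bigr)\bigl(g^C(b)-g^C(a)\bigr)$, valid because $g^C$ is increasing and each integrand $\theta$ is of bounded variation, together with the Hölder bound $g^C(b)-g^C(a)\le H(b-a)^p$. This yields $H(b-a)^p\,\Var_a^b f$ for the one-point formula directly. For the trapezoidal formula it yields $\tfrac{1}{2}H(b-a)^p\,\Var_a^b f$, and since $p\in(0,1]$ gives $2^{-1}\le 2^{-p}$, this is bounded by $H\bigl(\tfrac{b-a}{2}\bigr)^p\,\Var_a^b f$, as required. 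The only points needing care are the monotonicity and additivity properties of the variation and the validity of the elementary Stieltjes bound for the Kurzweil-Stieltjes integral; the genuinely essential step, rather than an obstacle, is the cancellation of the jump sum afforded by Lemma~\ref{lemfi}, which is precisely what makes the Hölder regularity of $g^C$ alone suffice to control the error.
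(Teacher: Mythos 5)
Your proposal is correct, and the reduction via Lemma~\ref{lemfi} (cancellation of the jump sum) is exactly the paper's first step; but the core estimate is handled by a genuinely different route. The paper follows the Dragomir-style argument: it integrates by parts (transferring everything to an integral $\int_a^b h(s)\,\dif f(s)$ with $h$ built from $g^C$, e.g.\ $h(t)=g^C(t)-\tfrac{g^C(a)+g^C(b)}{2}$), applies the bound $\left|\int_a^b h\,\dif f\right|\le \|h\|\Var_a^b f$, and uses the Hölder property \emph{pointwise on all of} $[a,b]$, with the trapezoid constant arising from the concavity estimate $\tfrac{1}{2}\left[(t-a)^p+(b-t)^p\right]\le \left(\tfrac{b-a}{2}\right)^p$. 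You instead keep $g^C$ as the integrator, write each error as $\int_a^b\bigl(\text{const.}-f(s)\bigr)\,\dif g^C(s)$, bound the integrand by $\Var_a^b f$ (respectively $\tfrac12\Var_a^b f$ via additivity of the variation), and invoke the monotone-integrator bound together with the single increment $g^C(b)-g^C(a)\le H(b-a)^p$, recovering the stated trapezoid constant from $\tfrac12\le 2^{-p}$. Your version is more elementary: it avoids integration by parts and its jump-term bookkeeping entirely, it uses only the endpoint Hölder increment (a strictly weaker hypothesis than Hölder continuity on $[a,b]$), and for $p<1$ it actually yields the slightly sharper trapezoid bound $\tfrac12 H(b-a)^p\Var_a^b f$. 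What the paper's route buys is the standard template in which the regularity of $g^C$ enters through $\sup_t|g^C(t)-c|$ for a chosen constant $c$; that formulation adapts directly to other evaluation points (midpoint-type rules) where interior Hölder control, not just the endpoint increment, is what produces the constant. The only steps you should make explicit are the two you flagged: that $f\in BV$ is Kurzweil--Stieltjes integrable against the continuous increasing $g^C$, and that the sup-bound $\left|\int_a^b\theta\,\dif g^C\right|\le\bigl(\sup_{[a,b]}|\theta|\bigr)\bigl(g^C(b)-g^C(a)\bigr)$ passes from tagged-partition sums to the limit; both are routine.
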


\begin{rem} The previous quadrature formulae are most interesting in those cases where $D_g$ is finite, for it is under those circumstances that the sums involved become finite.
\end{rem}

\begin{proof} Thanks to Lemma~\ref{lemfi} it is enough to show that, if $g^C$ is
	$p$-$H$-Hölder on $[a,b]$ and $f\in BV([a,b])$, then
	\begin{equation} \label{eq:ec4a}
		\displaystyle
	\left| f(a)(g^C(b)-g^C(a)) -\int_a^b f(s) \, \dif g^C(s)
	\right| \leq \displaystyle H (b-a)^p \Var_a^b f, \end{equation}
	and
	\begin{equation}\label{eq:ec4b}
	\displaystyle 
	\left|
	\frac{f(a)+f(b)}{2}(g^C(b)-g^C(a))-\int_a^b f(s) \, \dif g^C(s)
	\right|\leq\displaystyle H \left(\frac{b-a}{2}\right)^p \Var_a^b f.
	\end{equation}
	Indeed, we can adapt the techniques in \cite{DRAGOMIR2011} for the Riemann-Stieltjes integral to the case of the Kurzweil-Stieltjes'. On one hand, by \cite[Theorem 6.3.6]{ANTUNES2019}, given $h:[a,b]\rightarrow \mathbb{R}$ continuous and 
	$f \in BV([a,b])$, it holds that
	\begin{equation} \label{eq:ec3}
	\left|\int_a^b h(s) \, \dif f(s)\right| \leq \|h\| \Var_a^b f.
	\end{equation}
	On the other, thanks to \cite[Theorem 6.4.2]{ANTUNES2019} (integration by parts),
	\begin{equation}
	\begin{array}{c}
	\displaystyle 
	\int_a^b \left[ g^C(s)-\frac{g^C(a)+g^C(b)}{2}\right] \, \dif f(s) =
	-\int_a^b f(s) \, \dif g^C(s) \\ 
	\displaystyle
	+\left. \left[g^C(x) -\frac{g^C(a)+g^C(b)}{2}\right] f(x) \right|_a^b 
	\\ 
	\displaystyle
	+\sum_{a\leq x \leq b} 
	\left(\Delta f^-(x) \Delta^- {g^C}(x)-\Delta^+ f(x) \Delta^+ {g^C}(x)\right),
	\end{array}
	\end{equation}
from where, given that $g^C$ is continuous,
	\begin{equation}
	\begin{array}{c}
	\displaystyle 
	\int_a^b \left[ g^C(s)-\frac{g^C(a)+g^C(b)}{2}\right] \, \dif f(s) =
	-\int_a^b f(s) \, \dif g^C(s) \\ 
	\displaystyle
	+\frac{f(a)+f(b)}{2}(g^C(b)-g^C(a)).
	\end{array}
	\end{equation}
In particular,
	\begin{equation}
	\begin{aligned}
	& \left|
	\frac{f(a)+f(b)}{2}(g^C(b)-g^C(a))-
	\int_a^b f(s) \, \dif g^C(s)\right| \\
	\leq & \left|
	\int_a^b \left[ g^C(s)-\frac{g^C(a)+g^C(b)}{2}\right] \, \dif f(s)
	\right|
	\end{aligned}
	\end{equation}
Let us define $h:t\in [a,b]\rightarrow h(t)$ as
	\begin{equation}
	h(t)=g^C(t)-\frac{g^C(a)+g^C(b)}{2}.
	\end{equation}
We have that
	\begin{equation}
	\begin{array}{rcl}
	\displaystyle |h(t)|&\leq&\displaystyle 
	\left| \frac{g^C(t)-g^C(a)+g^C(t)-g^C(b)}{2}
	\right| \\ 
	&\leq & \displaystyle 
	\frac{1}{2}|g^C(t)-g^C(a)| +\frac{1}{2}|g^C(b)-g^C(t)|
	 \\ 
	&\leq & \displaystyle 
	\frac{1}{2} H\left[(t-a)^p+(b-t)^p \right] \leq H \left(\frac{b-a}{2}\right)^p,
	\end{array}
	\end{equation}
for every $t \in [a,b]$. Thence, thanks to the bound (\ref{eq:ec3}), 
we obtain the bound (\ref{eq:ec4b}). In order to prove (\ref{eq:ec4a}) we can proceed in an analogous fashion integrating by parts:
	\begin{equation}
	\begin{array}{c}
	\displaystyle \int_a^b \left[g^C(s)-g^C(b)\right] \dif f(s)=-\int_a^b f(s) \, \dif g^C(s)
	\\ 
	\displaystyle
	+\left. \left[ g^C(x)-g^C(b) \right] f(x) \right|_a^b,
	\end{array}
	\end{equation}
	where we have already canceled out the terms concerning the sum. From the previous expression we obtain
	\begin{equation}
	\begin{aligned}
&
	\left| f(a)(g^C(b)-g^C(a))-\int_a^b f(s) \, \dif g^C(s)\right| \\
 \leq &\left| \int_a^b [g^C(s)-g^c(b)] \, \dif f(s) \right| \leq 
	H (b-a)^p \Var_a^b f.
\end{aligned}
	\end{equation}
\end{proof}

As we will see later on, it will be of special interest to consider the case when $D_f\subset D_g$ and $f^C$ behaves in a similar way to $g^C$. In such a case we can sharpen the previous quadrature formulae to obtain Lemma~\ref{lem4}.

\begin{dfn}Let $f:[a,b]\to\bR$ and $g:[a,b]\to\bR$ be left-continuous and increasing. We say $f$ is \emph{$g$-Lipschitz continuous} with \emph{Lipschitz constant} $H$ if $|f(t)-f(s)|\le H|g(t)-g(s)|$ for every $t,s\in[a,b]$.
	\end{dfn}
\begin{lem}\label{lempro}Let $g:[a,b]\to\bR$ be left-continuous and increasing $f:[a,b]\to\bR$ $g$-Lipschitz continuous. Then $f$ is $g$-continuous, bounded, $g$-integrable and of bounded variation.
	\end{lem}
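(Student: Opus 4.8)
The plan is to read off all four conclusions more or less directly from the defining inequality $|f(t)-f(s)|\le H|g(t)-g(s)|$, exploiting that $g$ is increasing (so $g(t)-g(s)\ge 0$ for $t\ge s$, and $g$ is bounded on $[a,b]$). I would first dispose of the degenerate case $H=0$, in which $f$ is constant and every assertion is trivial, and assume $H>0$ thereafter. The natural order is to establish $g$-continuity and boundedness, then bounded variation, and finally to deduce $g$-integrability from the first three, since integrability is the only statement that is not a one-line consequence of the Lipschitz bound.

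For $g$-continuity, fix $t_0\in[a,b]$ and $\epsilon>0$ and take $\delta=\epsilon/H$: whenever $|g(t)-g(t_0)|<\delta$ the inequality gives $|f(t)-f(t_0)|\le H|g(t)-g(t_0)|<\epsilon$, which is exactly $g$-continuity at $t_0$. Boundedness is equally immediate: since $g$ is increasing we have $g(a)\le g(t)\le g(b)$ for all $t$, whence $|f(t)|\le|f(a)|+H|g(t)-g(a)|\le|f(a)|+H(g(b)-g(a))$, a uniform bound. For bounded variation I would take an arbitrary partition $a=t_0<t_1<\dots<t_n=b$ and apply the Lipschitz bound term by term; since $g$ is increasing the absolute values disappear and the sum telescopes,
\begin{equation}
\sum_{i=1}^n|f(t_i)-f(t_{i-1})|\le H\sum_{i=1}^n\bigl(g(t_i)-g(t_{i-1})\bigr)=H\bigl(g(b)-g(a)\bigr).
\end{equation}
Taking the supremum over all partitions yields $\Var_a^b f\le H(g(b)-g(a))<\infty$, so $f\in BV([a,b])$.

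Finally, for $g$-integrability I would argue that $f$ is $\mu_g$-measurable and bounded on a finite-measure space. Measurability follows from the bounded-variation property just obtained: a function of bounded variation is a difference of two monotone functions, each of which is Borel measurable (preimages of rays are intervals), so $f$ is Borel and therefore $\mathcal{M}_g$-measurable, since $\mathcal{M}_g$ contains the Borel $\sigma$-algebra. Combining this with boundedness and $\mu_g([a,b))=g(b)-g(a)<\infty$ gives $\int_{[a,b)}|f|\,\dif\mu_g\le\|f\|_\infty\,(g(b)-g(a))<\infty$, so $f\in\mathcal{L}^1_g([a,b))$. I expect this last step to be the only real obstacle: the three remaining properties are direct, whereas integrability requires the separate measurability argument (alternatively one may invoke that $g$-continuity already entails $g$-measurability), after which the finiteness of $\mu_g$ makes the integral bound routine.
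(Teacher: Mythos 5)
Your proof is correct, and for three of the four claims it is essentially the paper's own argument: the paper declares $g$-continuity and boundedness to be clear from the Lipschitz bound and the boundedness of $g$, and gets bounded variation from $\Var_a^b f\le H\Var_a^b g=H(g(b)-g(a))$, which is exactly your telescoping computation spelled out. Where you genuinely diverge is the $g$-integrability step: the paper dispatches it by appealing to ``the definition of the Riemann--Stieltjes integral and the fact that $f$ is $g$-continuous'', whereas you prove Lebesgue--Stieltjes integrability directly --- $f\in BV([a,b])$ gives a Jordan decomposition into monotone, hence Borel, functions, so $f$ is $\mathcal{M}_g$-measurable, and a bounded measurable function on the finite-measure space $([a,b),\mathcal{M}_g,\mu_g)$ lies in $\mathcal{L}^1_g([a,b))$. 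Your route is arguably the more robust of the two: since $f$ is $g$-Lipschitz, its discontinuities sit inside $D_g$, and common same-sided discontinuities of integrand and integrator are precisely the classical obstruction to Riemann--Stieltjes integrability in the mesh sense --- which is why the paper itself works with Kurzweil--Stieltjes integrals elsewhere --- so the paper's one-line appeal is loose where your measurability argument lands directly on the $\mathcal{L}^1_g$ membership the lemma asserts. The separate treatment of $H=0$ and the explicit $\delta=\epsilon/H$ are fine, if more detail than the paper bothers with.
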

\begin{proof}
	It is clear that $f$ is $g$-continuous. Since $g$ is bounded and $|f(t)-f(s)|\le H|g(t)-g(s)|$ for every $t,s\in[a,b]$, $f$ is bounded as well.

	 The $g$-integrability is an straightforward consequence of the definition of the Riemann-Stieltjes integral and the fact that $f$ is $g$-continuous. Finally, $\Var_a^b f\le H\Var_a^b g=H(g(b)-g(a))$, so $f$ is of bounded variation.
	\end{proof}

\begin{cor} Let $g:[a,b]\to\bR$ be left-continuous and 
increasing with $g^C$ being $p$-$H$-Hölder on $[a,b]$. Let $f:[a,b]\to\bR$ be 
$g$-Lipschitz continuous with Lipschitz constant $H$. Then,
	\begin{equation} \label{eq:ec4b1}
	\begin{array}{c}
	\displaystyle \left| f(a)(g^C(b)-g^C(a)) -\int_a^b f(s) \, \dif g^C(s)
	\right|\\ 
	\leq \displaystyle H^2 (b-a)^p(g(b)-g(a)) , 
	\end{array}
	\end{equation}
	\begin{equation}\label{eq:ec4b2}
	\begin{array}{c}
	\displaystyle 
	\left|
	\frac{f(a)+f(b)}{2}(g^C(b)-g^C(a))-\int_a^b f(s) \, \dif g^C(s)
	\right|\\ \displaystyle 
	\leq\displaystyle H^2 \left(\frac{b-a}{2}\right)^p (g(b)-g(a)).
	\end{array}
	\end{equation}
\end{cor}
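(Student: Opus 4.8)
The plan is to reduce both inequalities to the intermediate estimates \eqref{eq:ec4a} and \eqref{eq:ec4b} already established inside the proof of Lemma~\ref{lem1}, and then to control $\Var_a^b f$ by means of Lemma~\ref{lempro}. The corollary is essentially the composition of the quadrature estimates of Lemma~\ref{lem1} with the variation bound of Lemma~\ref{lempro}.

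First I would verify that the hypotheses of Lemma~\ref{lem1} are satisfied. Since $f$ is $g$-Lipschitz continuous, Lemma~\ref{lempro} guarantees that $f\in BV([a,b])$ and that $f$ is $g$-integrable, so that $f\in BV([a,b])\cap\mathcal{L}^1_g([a,b))$; combined with the standing hypothesis that $g^C$ is $p$-$H$-Hölder, this places us exactly in the setting of Lemma~\ref{lem1}. Consequently the bounds \eqref{eq:ec4a} and \eqref{eq:ec4b} apply verbatim, the first giving
\begin{equation}
\left| f(a)(g^C(b)-g^C(a)) -\int_a^b f(s)\,\dif g^C(s)\right|\leq H(b-a)^p\,\Var_a^b f,
\end{equation}
and the second giving the analogous trapeze estimate with $H\left(\frac{b-a}{2}\right)^p\Var_a^b f$ on the right-hand side.

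The second ingredient is the variation control from Lemma~\ref{lempro}: because $f$ is $g$-Lipschitz with constant $H$ and $g$ is increasing, one has $\Var_a^b f\le H\,\Var_a^b g = H\,(g(b)-g(a))$, where the last equality uses monotonicity of $g$ to identify $\Var_a^b g$ with $g(b)-g(a)$. Substituting this bound into the two estimates above, the factor $H\cdot\Var_a^b f$ becomes $H\cdot H(g(b)-g(a))=H^2(g(b)-g(a))$, which yields \eqref{eq:ec4b1} and \eqref{eq:ec4b2} respectively.

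There is no genuine obstacle here; the only point meriting care is the bookkeeping of the constant $H$, which is deliberately reused to denote both the Hölder constant of $g^C$ and the Lipschitz constant of $f$, so that the two occurrences of $H$ combine into a single $H^2$. One should also confirm that the Kurzweil–Stieltjes integral $\int_a^b f\,\dif g^C$ appearing in the corollary is the same object as in the estimates of Lemma~\ref{lem1}, which is immediate since $g^C$ is the continuous part of $g$ in both statements.
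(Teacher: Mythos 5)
Your proposal is correct and is exactly the argument the paper intends: the corollary is stated without proof precisely because it follows by combining the intermediate estimates \eqref{eq:ec4a}--\eqref{eq:ec4b} from the proof of Lemma~\ref{lem1} with the bound $\Var_a^b f\le H\,\Var_a^b g=H(g(b)-g(a))$ from Lemma~\ref{lempro}. Your verification of the $BV$ hypothesis and the bookkeeping of the two roles of $H$ into the single factor $H^2$ match the paper's setup, so there is nothing to correct.
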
	

Error estimates obtained in the previous formula are not enough for our 
proposes, that is, proving the convergence of the numerical approximation to the solution 
of problem (\ref{eq:ec1}). In order to improve the previous estimations 
we must add some extra requirements to the continuous part of 
functions $g$ and $f$. 

In the next lemma and corollary we will prove that if $f$ is a g-Lipschitz 
continuous function, some properties of $g^C$ and $g^B$ are transferred to 
$f^C$ and $f^B$ respectively. In particular, we will see that is $f$ is a $g$-Lipschitz 
continuous function and $g^C$ is Lipschitz continuous then $f^C$ is also 
Lipschitz continuous. This property will be fundamental in order to improve the 
previous quadrature formula. 

For the next lemma we denote by $\fC(X)$ the set of connected components of $X\ss\bR$.
\begin{lem}
 Let $g:[a,b]\to\bR$ be  left-continuous in 
 $(a,b)$ and increasing and $f:[a,b]\to\bR$ be $g$-Lipschitz continuous with 
 Lipschitz constant $H$. Then $f^B$ is $g^B$-Lipschitz continuous with 
 Lipschitz constant $H$.
 Furthermore, if $f$ is increasing, then $f^C$ is $g^C$-Lipschitz continuous with  Lipschitz constant $H$.
\end{lem}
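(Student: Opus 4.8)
The plan is to prove the two assertions separately: the jump part by a direct summation estimate, and the continuous part by a monotonicity trick. First I would record the pointwise bound on the jumps of $f$. Fixing $u\in[a,b)$ and letting $w\to u^+$ in the $g$-Lipschitz inequality $|f(w)-f(u)|\le H|g(w)-g(u)|$, while using that $g$ is increasing, yields
\[
|\Delta^+ f(u)|=|f(u^+)-f(u)|\le H\,(g(u^+)-g(u))=H\,\Delta^+ g(u).
\]
In particular $D_f\ss D_g$, and $\sum_{u\in[a,b)}|\Delta^+ f(u)|\le H\sum_{u\in[a,b)}\Delta^+ g(u)\le H(g(b)-g(a))<\infty$, so the series defining $f^B(t)=\sum_{s\in[a,t)}\Delta^+ f(s)$ converges absolutely and $f^B$ is well defined (consistently with $f\in BV([a,b])$ from Lemma~\ref{lempro}).

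For the first assertion, take $a\le s\le t\le b$, so that $f^B(t)-f^B(s)=\sum_{u\in[s,t)}\Delta^+ f(u)$ and likewise for $g^B$. By the triangle inequality and the jump bound above,
\[
|f^B(t)-f^B(s)|\le\sum_{u\in[s,t)}|\Delta^+ f(u)|\le H\sum_{u\in[s,t)}\Delta^+ g(u)=H\,(g^B(t)-g^B(s))=H\,|g^B(t)-g^B(s)|,
\]
where the last equality uses that $g^B$ is increasing. This is precisely the $g^B$-Lipschitz continuity of $f^B$ with constant $H$, and it requires no monotonicity of $f$.

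For the second assertion, the naive route---estimating $f^C(t)-f^C(s)=[f(t)-f(s)]-[f^B(t)-f^B(s)]$ directly---fails, because the available bounds on both increments point in the same direction and do not combine. Instead I would introduce the auxiliary function $\psi:=Hg-f$. For $s<t$, the $g$-Lipschitz inequality together with the monotonicity of $g$ gives $f(t)-f(s)\le H(g(t)-g(s))$, hence $\psi(t)-\psi(s)\ge 0$; that is, $\psi$ is increasing. Since the jump part is linear in the underlying function, $\psi^B=Hg^B-f^B$, and therefore $\psi^C=\psi-\psi^B=Hg^C-f^C$. As the continuous part of an increasing function is itself increasing, $\psi^C=Hg^C-f^C$ is increasing, so for $s<t$ we get $f^C(t)-f^C(s)\le H(g^C(t)-g^C(s))$. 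Finally, because $f$ is increasing its continuous part $f^C$ is increasing, and $g^C$ is increasing as well, so both sides are nonnegative and the inequality reads $|f^C(t)-f^C(s)|\le H|g^C(t)-g^C(s)|$, which is the claimed $g^C$-Lipschitz continuity with constant $H$.

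The main obstacle is the second part: once one realizes that the direct estimate cannot succeed, the decisive idea is to repackage the hypothesis as the monotonicity of $\psi=Hg-f$ and then exploit the linearity of the jump/continuous decomposition together with the elementary fact that the continuous part of a monotone function remains monotone.
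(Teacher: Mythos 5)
Your proof is correct, and while your treatment of the jump part coincides with the paper's (the pointwise bound $|\Delta^+ f(u)|\le H\Delta^+ g(u)$ obtained by a one-sided limit, followed by summation over $[s,t)$), your argument for the continuous part takes a genuinely different route. The paper proceeds by truncation: it removes the first $n$ jumps to form auxiliary functions $f_n,g_n$, proves $F_n(t,s):=H(g_n(s)-g_n(t))-(f_n(s)-f_n(t))\ge 0$ first on the closures of the connected components of $[a,b]\setminus\{t_1,\dots,t_n\}$, then globally by chaining through the additivity $F_n(t,x)+F_n(x,s)=F_n(t,s)$, and finally passes to the uniform limit $F_n\to F$. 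You instead repackage the hypothesis as the monotonicity of $\psi:=Hg-f$ and invoke two structural facts: the map $h\mapsto h^B$ is linear (legitimate here because $\sum_u|\Delta^+ f(u)|\le H\sum_u\Delta^+ g(u)<\infty$, which your first part supplies, so the series split by absolute convergence), and the continuous part of an increasing function is increasing --- the very fact the paper itself declares ``clear'' for $g$ in Section~\ref{FC}; note that $\psi$ lies in the same class as $g$, being increasing and left-continuous wherever $g$ is, since $f$ inherits left-continuity from $g$ via the $g$-Lipschitz bound. This yields $f^C(t)-f^C(s)\le H(g^C(t)-g^C(s))$ with no limit process, and the monotonicity of $f$ (hence of $f^C$) supplies the lower bound $0$, giving the two-sided estimate. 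Your route is shorter and more elementary, trading the paper's approximation scheme for an algebraic identity. It also buys more than the lemma asks: applying the same argument to $\phi:=Hg+f$, which is increasing for the same reason, gives $f^C(t)-f^C(s)\ge -H(g^C(t)-g^C(s))$, so the two-sided bound holds \emph{without} assuming $f$ increasing at all; that is, your method recovers Corollary~\ref{corld} directly, whereas the paper derives it from this lemma by splitting $f=f_1-f_2$ through the fundamental theorem of calculus for the $g$-derivative.
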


\begin{proof}
 Let $t\in[a,b)$, $s\in(x,b)$. Then
 \begin{equation}
 \begin{array}{rcl}
 \displaystyle
 |f^B(s)-f^B(t)|&\le& \displaystyle |f(s)-f(t)|+|f^C(s)-f^C(t)|\\
 &\le& \displaystyle H(g(s)-g(t))+|f^C(s)-f^C(t)|.
 \end{array}
 \end{equation}
 And thus, taking the limit when $s$ tends to $t$ from the right,
 \begin{equation}|\Delta^+f(t)|=|\Delta^+f^B(t)|\le H\Delta^+g(t)=H\Delta^+g^B(t).\end{equation}
 Hence, for $t\in[a,b)$, $s\in(x,b)$,
 \begin{equation}
 \begin{aligned}
 |f^B(t)-f^B(s)|= & \left|\sum_{r\in[t,s)}\Delta^+ f(r)\right|\le\sum_{r\in[t,s)}\left|\Delta^+ f(r)\right|\le\sum_{r\in[t,s)}H\Delta^+ g(r)\\= & H(g^B(s)-g^B(t))
 \end{aligned}
 \end{equation}
 Therefore, $f^B$ is $g^B$-Lipschitz and $g$-Lipschitz with constant $H$.

We know that $\sum_{t\in[a,b)}\Delta^+g(t)<\infty$. This implies, on one hand, that $D_g=\{t_n\}_{n\in\Lambda}$ with $\Lambda\subset \bN$ is countable and, on the other, that $\Delta^+g(t_{n})\to 0$. Observe that $g$ es continuous at $b$ and, either $g$ is continuous at $a$, or $a=t_k$ for some $k\in\Lambda$. In this last case we will assume, without loss of generality, that $a=t_1$.

Thus, consider, for $n\in\bN$, the functions 
\begin{equation}\begin{aligned}f_n(t):=f(t)-
\sum_{\substack{k \leq n\\k\in\Lambda}}\Delta^+ f(t_k) \chi_{(t_k,b]}(t)=f^C(t)+
\sum_{\substack{k > n\\k\in\Lambda}}\Delta^+ f(t_k) \chi_{(t_k,b]}(t)
,\\ g_n(t)=g(t)-
\sum_{\substack{k \leq n\\k\in\Lambda}}\Delta^+ g(t_k) \chi_{(t_k,b]}(t)=g^C(t)-
\sum_{\substack{k > n\\k\in\Lambda}}\Delta^+ g(t_k) \chi_{(t_k,b]}(t).\end{aligned}\end{equation}

Given $A\in\fC([a,b]\bs \{t_k\}_{k=1}^n)$ and $t,s\in A$, $t<s$, since there are 
no jumps of $\{t_k\}_{k=1}^n$ in $[t,s]$, $f(s)-f(t)=f_n(s)-f_n(t)$ and 
$g(s)-g(t)=g_n(s)-g_n(t)$, so $|f_n(s)-f_n(t)|\le H|g_n(s)-g_n(t)|$. Define, 
for $t,s\in[a,b]$, $t\le s$, 
\begin{align}F_n(t,s):= H(g_n(s)-g_n(t))-(f_n(s)-f_n(t)).
\end{align} 
Since $F_n(t,s)\ge 0$ for every $t,s\in A$, and $f_n$ and $g_n$ are continuous at 
the points of $\partial A$, it also holds for $t,s\in \overline A$. Furthermore, 
$F_n(t,x)+F_n(x,s)=F_n(t,s)$. Hence, if $F_n(t,s)\ge 0$ for 
$t,s\in [\a,\b]$ and $t,s\in[\b,\c]$ then $F_n(t,s)\ge 0$ for 
$t,s\in [\a,\c]$. To see this, just observe that if $t\in[\a,\b]$ 
and $s\in[\b,\c]$ we consider $F_n(t,s)=F_n(t,\b)+F_n(\b,s)\ge 0$.

Now, for any $t,s\in[a,b]$, $t<s$, either $[t,s]\cap \{t_k\}_{k=1}^n=\emptyset$ and 
thus $F_n(t,s)\ge 0$, or $[t,s]\cap \{t_k\}_{k=1}^n=\{t_k\}_{k=p}^q$, and
\begin{equation}F_n(t,s)=F_n(t,t_p)+F_n(t_p,t_{p+1})+\cdots+F_n(t_{q-1},t_{q})+F_n(t_q,t)\ge 0.
\end{equation}
We conclude that $F_n\ge0$.

Observe now that $f_n$ converges uniformly to $f^C$ and $g_n$ converges uniformly to $g^C$, so $F_n$ converges uniformly to 
\begin{align}F(t,s):= H(g^c(s)-g^C(t))-(f^C(s)-f^C(t)).
\end{align}  
Since $F_n\ge 0$ for every $n\in\bN$, $F\ge 0$ and thus,  $f^C$ is $g^C$-Lipschitz continuous with Lipschitz constant $H$.
\end{proof}

\begin{cor}\label{corld}
 Let $g:[a,b]\to\bR$ be left-continuous in 
 $(a,b)$ and increasing, $f:[a,b]\to\bR$ be $g$-Lipschitz continuous with 
 Lipschitz constant $H$. Then $f^C$ is $g^C$-Lipschitz continuous with 
 Lipschitz constant $H$.
 \end{cor}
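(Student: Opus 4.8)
The plan is to remove the monotonicity hypothesis from the previous lemma by writing $f$ as a difference of two increasing, $g$-Lipschitz continuous functions and then applying that lemma to each summand. Concretely, I would set
\begin{equation}
u:=Hg+f,\qquad v:=Hg-f
\end{equation}
on $[a,b]$. First I would verify that $u$ and $v$ are increasing: for $t\le s$ the $g$-Lipschitz hypothesis together with the monotonicity of $g$ gives $-H(g(s)-g(t))\le f(s)-f(t)\le H(g(s)-g(t))$, and adding, respectively subtracting, $H(g(s)-g(t))$ shows $u(s)-u(t)\ge 0$ and $v(s)-v(t)\ge 0$. Next, both functions are $g$-Lipschitz continuous: by the triangle inequality $|u(s)-u(t)|\le H|g(s)-g(t)|+|f(s)-f(t)|\le 2H|g(s)-g(t)|$, and the same bound holds for $v$. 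Since $u$ and $v$ are increasing and $g$-Lipschitz continuous with constant $2H$, the ``furthermore'' part of the previous lemma applies to each of them and yields that $u^C$ and $v^C$ are $g^C$-Lipschitz continuous with constant $2H$.

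I would then pass to the continuous parts using the linearity of the decomposition. Since $h\mapsto\Delta^+h$ is linear, the jump parts satisfy $u^B=Hg^B+f^B$ and $v^B=Hg^B-f^B$ (the jump part $f^B$ being well defined by the first part of the previous lemma, which does not require $f$ to be increasing), whence $u^C=Hg^C+f^C$ and $v^C=Hg^C-f^C$, and therefore $f^C=\tfrac12(u^C-v^C)$.

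Finally, to recover the sharp constant $H$ rather than the $2H$ a crude estimate would produce, I would exploit the monotonicity of the continuous parts. Because $u$ and $v$ are increasing, so are $u^C$ and $v^C$ (the continuous part of an increasing function is increasing, exactly as for $g^C$ in the setup), so for $t\le s$ we have $0\le u^C(s)-u^C(t)\le 2H(g^C(s)-g^C(t))$, and likewise for $v^C$. Writing $f^C(s)-f^C(t)=\tfrac12\big[(u^C(s)-u^C(t))-(v^C(s)-v^C(t))\big]$ and discarding the nonnegative term of the appropriate sign in each direction gives $-H(g^C(s)-g^C(t))\le f^C(s)-f^C(t)\le H(g^C(s)-g^C(t))$, that is, $|f^C(s)-f^C(t)|\le H|g^C(s)-g^C(t)|$.

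I expect this last step to be the main (indeed only) delicate point: the naive triangle inequality applied to $f^C=\tfrac12(u^C-v^C)$ delivers merely the constant $2H$, and it is the sign information coming from the monotonicity of $u^C$ and $v^C$ that restores the optimal constant $H$ asserted in the statement. Everything else is a routine verification that the auxiliary functions $u,v$ meet the hypotheses of the lemma already proved.
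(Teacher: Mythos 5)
Your proof is correct, but it takes a genuinely different route from the paper's. The paper argues measure-theoretically: $g$-Lipschitz continuity makes $f$ $g$-absolutely continuous, so by the Fundamental Theorem of Calculus for the Stieltjes derivative (\cite[Theorem~5.4]{POUSO2015}) one has $f(t)=\int_{[a,t)}f'_g\,\dif\mu_g$ with $|f'_g|\le H$ $\mu_g$-a.e., and splitting this integral over $P=(f'_g)^{-1}(\bR^+)$ and its complement exhibits $f=f_1-f_2$ as a difference of increasing functions, each $g$-Lipschitz with the sharp constant $H$, to which the preceding lemma applies directly. Your decomposition $f=\tfrac{1}{2}\left[(Hg+f)-(Hg-f)\right]$ accomplishes the same Jordan-type splitting purely algebraically --- no $g$-absolute continuity, no Fundamental Theorem of Calculus, no measurability of the sets $P$ and $N$ --- at the price that $u=Hg+f$ and $v=Hg-f$ are only $2H$-$g$-Lipschitz, which forces your closing sign argument to recover the constant $H$. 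That closing step is sound as written: the linearity $u^B=Hg^B+f^B$ is legitimate because $|\Delta^+f|\le H\Delta^+g$ makes the series defining $f^B$ absolutely convergent, and the continuous part of an increasing function is indeed increasing. It is worth noting that the paper needs this very same step tacitly: the bare triangle inequality on $f^C=f_1^C-f_2^C$ would only yield the constant $2H$ there as well, and the sharp constant comes from the monotonicity of $f_1^C$ and $f_2^C$, which the paper leaves unsaid but you make explicit. Finally, your argument can be streamlined: the appeal to the lemma's ``furthermore'' part is actually dispensable, since the monotonicity of $u^C=Hg^C+f^C$ and of $v^C=Hg^C-f^C$ alone already gives $-H(g^C(s)-g^C(t))\le f^C(s)-f^C(t)\le H(g^C(s)-g^C(t))$ for $t\le s$; the $2H$ upper bound is never needed, so your construction in fact furnishes an elementary proof of the corollary --- and of the lemma's ``furthermore'' statement as the increasing special case --- bypassing the paper's approximation argument with the functions $F_n$ altogether.
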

 
\begin{proof} Since $f$ is $g$-Lipschitz continuous it is $g$-absolutely continuous and by  \cite[Theorem~5.4  (Fundamenta Theorem of Calculus)]{POUSO2015} there exists $f'_g$ $\mu_g$-a.e. and $f(t)=\int_{[a,t)}f'_g(s)\dif\mu_g(s)$. Since $f$ is $g$-Lipschitz continuous with Lipschitz constant $H$, for $s>t$,
	\begin{equation}\left|\int_{[t,s)}f'_g(r)\dif\mu_g(r)\right|=|f(s)-f(t)|\le H(g(s)-g(t)).\end{equation}
	Thus, by the definition of the Stieltjes $g$-derivative, $|f'_g|\le H$ $\mu_g$-a.e. 
	
	 Let $P=(f'_g)^{-1}(\bR^+)$ and $N=[a,b]\bs P$. And define \begin{equation}f_1(t)=\int_{[a,t)\cap P}f'_g(s)\dif\mu_g(s), \quad f_2(t)=-\int_{[a,t)\cap N}f'_g(s)\dif\mu_g(s).\end{equation} Clearly, both $f_1$ and $f_2$ are $g$-Lipschitz continuous with Lipschitz constant $H$ and increasing, so  $f_1^C$ and $f_2^C$ are $g^C$-Lipschitz with Lipschitz constant $H$. Thus, $f^C=f_1^C-f_2^C$ is $g$-Lipschitz with Lipschitz constant $H$.
\end{proof}

\begin{cor}  Let $g:[a,b]\to\bR$ be continuous in $\{a,b\}$, left-continuous in 
 $(a,b)$ and increasing, $f:[a,b]\to\bR$ be $g$-Lipschitz continuous with 
 Lipschitz constant $H$. If $g^C$ is Lipschitz continuous with Lipschitz constant 
$H$, then $f^C$ is Lipschitz continuous with Lipschitz constant 
$H^2$.
\end{cor}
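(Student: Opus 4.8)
The plan is to obtain the ordinary Lipschitz bound for $f^C$ by composing two Lipschitz-type estimates that are already available: the $g^C$-Lipschitz continuity of $f^C$ furnished by Corollary~\ref{corld}, and the ordinary Lipschitz continuity of $g^C$ assumed in the hypothesis. In essence this is a ``chain rule'' for Lipschitz constants, where the intermediate modulus is measured against $g^C$.

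First I would check that the hypotheses of Corollary~\ref{corld} are satisfied: $g$ is left-continuous in $(a,b)$ and increasing, and $f$ is $g$-Lipschitz continuous with constant $H$. The extra requirement here that $g$ be continuous at the endpoints $\{a,b\}$ is only a strengthening of those hypotheses, so it causes no difficulty. Corollary~\ref{corld} then gives, for every $t,s\in[a,b]$,
\begin{equation}
|f^C(t)-f^C(s)|\le H\,|g^C(t)-g^C(s)|.
\end{equation}

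Next I would use the assumption that $g^C$ is Lipschitz continuous with constant $H$, that is, $|g^C(t)-g^C(s)|\le H\,|t-s|$ for all $t,s\in[a,b]$. Substituting this into the bound above yields
\begin{equation}
|f^C(t)-f^C(s)|\le H\,|g^C(t)-g^C(s)|\le H^2\,|t-s|,
\end{equation}
which is precisely the assertion that $f^C$ is Lipschitz continuous with Lipschitz constant $H^2$.

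I do not expect any genuine obstacle: once Corollary~\ref{corld} is in hand, the statement reduces to a single composition of the two inequalities, and all the substantive work was done in establishing that corollary. The only point requiring minor care is bookkeeping with the definitions, namely confirming that the $g^C$-Lipschitz estimate of Corollary~\ref{corld} and the ordinary Lipschitz estimate for $g^C$ are oriented so as to chain together in the stated direction; this is immediate from the displayed inequalities.
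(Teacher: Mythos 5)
Your proposal is correct and is exactly the paper's own proof: invoke Corollary~\ref{corld} to get $|f^C(s)-f^C(t)|\le H|g^C(s)-g^C(t)|$ and chain it with the assumed Lipschitz bound $|g^C(s)-g^C(t)|\le H|s-t|$ to obtain the constant $H^2$. No differences worth noting.
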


\begin{proof} Is a direct consequence of previous corollary since for all $s,t \in [a,b]$
\begin{equation}
|f^C(s)-f^C(t)|\leq H |g^C(s)-g^C(t)|\leq H^2 |s-t|.
\end{equation}
\end{proof}

In order to simplify the notation, from now on we will assume, when necessary, 
that both the continuous part of $f$ and the continuous part of $g$ are Lipschitz 
continuous with the same Lipschitz constant $H$ (if necessary, we redefine $H$ to be $\max\{H,H^2\}$).

\begin{lem} \label{lem4} Let $g:[a,b]\to\bR$ be left-continuous and 
increasing and $f:[a,b]\to\bR$ be $g$-Lipschitz continuous with 
Lipschitz constant $H$. We also assume that $f^C$ and 
$g^C$ are Lipschitz continuous with Lipschitz constant 
$H$. Then,
\begin{equation} \label{eq:ec12}
\begin{array}{c}
\displaystyle
\bigg| f^C(a)(g^C(b)-g^C(a))\\
\displaystyle
+\sum_{s\in[a,b)}\left[f(s)\Delta^+ g(s)+
\Delta^+ f(s)(g^C(b)-g^C(s))\right]
- \int_{[a,b)} f \, \dif\mu_g \bigg| 
 \\ \displaystyle
= \bigg|
f^C(a)(g^C(b)-g^C(a))+f(a)\Delta^+ g(a)+
\Delta^+ f(a)(g^C(b)-g^C(a)) \\ \displaystyle 
 +\sum_{k\in\Lambda}\left[f(d_k)\Delta^+ g(d_k)+
\Delta^+ f(d_k)(g^C(b)-g^C(d_k))\right]-
\int_{[a,b)} f \, \dif\mu_g \bigg|\\ \leq H^2 (b-a)^2,
\end{array}
\end{equation}
and 
\begin{equation} \label{eq:ec13}
\begin{array}{c}
\displaystyle 
\bigg| 
\frac{f^C(a)+f^C(b)}{2}(g^C(b)-g^C(a))
 \\ \displaystyle
+\sum_{s\in[a,b)}\left[f(s)\Delta^+ g(s)+
\Delta^+ f(s)(g^C(b)-g^C(s))\right]-
\int_{[a,b)} f \, \dif\mu_g \bigg|
 \\ \displaystyle
 = \bigg|
\frac{f^C(a)+f^C(b)}{2}(g^C(b)-g^C(a))+f(a)\Delta^+ g(a)+
\Delta^+ f(a)(g^C(b)-g^C(a)) \\ \displaystyle 
+\sum_{k\in\Lambda}\left[f(d_k)\Delta^+ g(d_k)+
\Delta^+ f(d_k)(g^C(b)-g^C(d_k))\right]-
\int_{[a,b)} f \, \dif\mu_g \bigg| \\ \displaystyle
\leq \frac{H^2}{2} (b-a)^2,
\end{array}
\end{equation}
where $\{d_k\}_{k\in\Lambda}$ is the set of discontinuities
of	$g$ in $(a,b)$.
\end{lem}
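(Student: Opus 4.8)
The plan is to reduce both quadrature errors to the purely continuous quadrature errors for $f^C$ against $g^C$, and then to invoke the one-point and trapezoidal estimates already established in the proof of Lemma~\ref{lem1} with Hölder exponent $p=1$.

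First I would invoke Lemma~\ref{lemfi} to replace the Lebesgue-Stieltjes integral by
\[
\int_{[a,b)} f\,\dif\mu_g = \int_a^b f\,\dif g^C + \sum_{s\in[a,b)} f(s)\Delta^+ g(s),
\]
so that the jump sum $\sum_{s\in[a,b)} f(s)\Delta^+ g(s)$ appearing in the left-hand sides of \eqref{eq:ec12} and \eqref{eq:ec13} cancels exactly against the corresponding part of the integral, leaving only $\int_a^b f\,\dif g^C$. The two displayed forms of each left-hand side (the compact $\sum_{s\in[a,b)}$ form and the expanded $f(a)\Delta^+g(a)+\sum_{k\in\Lambda}$ form) are literally equal, since this is merely the splitting of the jump sum into the endpoint contribution at $a$ and the interior discontinuities $\{d_k\}_{k\in\Lambda}$, exactly as in Lemma~\ref{lemfi}.

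Next I would dispose of the extra jump term $\sum_{s\in[a,b)}\Delta^+ f(s)(g^C(b)-g^C(s))$. Writing $f = f^C + f^B$ with $f^B(t)=\sum_{s\in[a,t)}\Delta^+ f(s)$, linearity together with the continuity of $g^C$ gives
\[
\int_a^b f^B\,\dif g^C = \sum_{s\in[a,b)}\Delta^+ f(s)\int_a^b \chi_{(s,b]}\,\dif g^C = \sum_{s\in[a,b)}\Delta^+ f(s)\,(g^C(b)-g^C(s)),
\]
where the interchange of sum and integral is justified by the uniform convergence of the partial jump sums to $f^B$ already exploited in the preceding lemma. Hence $\int_a^b f\,\dif g^C = \int_a^b f^C\,\dif g^C + \sum_{s\in[a,b)}\Delta^+ f(s)(g^C(b)-g^C(s))$, and this extra jump term cancels against the one built into the quadrature. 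After these two cancellations the left-hand side of \eqref{eq:ec12} collapses to $\big|f^C(a)(g^C(b)-g^C(a)) - \int_a^b f^C\,\dif g^C\big|$ and that of \eqref{eq:ec13} to $\big|\tfrac{f^C(a)+f^C(b)}{2}(g^C(b)-g^C(a)) - \int_a^b f^C\,\dif g^C\big|$.

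Finally I would close by applying the estimates \eqref{eq:ec4a} and \eqref{eq:ec4b} from the proof of Lemma~\ref{lem1}, with $f$ replaced by $f^C$, with $g^C$ in its own role as a $1$-$H$-Hölder (that is, Lipschitz) function, and with exponent $p=1$; this is legitimate because $f^C$, being Lipschitz, lies in $BV([a,b])$ and $g^C$ is continuous. These yield the bounds $H(b-a)\Var_a^b f^C$ and $H\tfrac{b-a}{2}\Var_a^b f^C$ respectively. Since $f^C$ is Lipschitz with constant $H$ we have $\Var_a^b f^C \le H(b-a)$, and substituting gives precisely $H^2(b-a)^2$ and $\tfrac{H^2}{2}(b-a)^2$. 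The only delicate point is the identity $\int_a^b f^B\,\dif g^C = \sum_{s\in[a,b)}\Delta^+ f(s)(g^C(b)-g^C(s))$, whose purpose is to make the jump contributions cancel so that the residual error is controlled solely by the continuous parts; once that is in hand, everything reduces to a direct appeal to the machinery of Lemma~\ref{lem1}.
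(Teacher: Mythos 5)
Your proposal is correct and follows essentially the same route as the paper: both reduce the error to the purely continuous quadrature $\big|\,\cdot\,(g^C(b)-g^C(a))-\int_a^b f^C\,\dif g^C\big|$ by separating jump and continuous parts (the paper splits $\int f\,\dif g$ into the four bilinear pieces $f^B\dif g^B$, $f^B\dif g^C$, $f^C\dif g^B$, $f^C\dif g^C$, while you reach the same intermediate identity via Lemma~\ref{lemfi} plus $\int_a^b f^B\,\dif g^C=\sum_{s\in[a,b)}\Delta^+f(s)(g^C(b)-g^C(s))$), and both then invoke the estimates of Lemma~\ref{lem1} with $p=1$ and $\Var_a^b f^C\le H(b-a)$. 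Your uniform-convergence justification of the sum--integral interchange is a sound substitute for the paper's appeal to the Monteiro--Slav\'{\i}k--Tvrd\'y-type theorems, and the implicit fact $D_f\subset D_g$ you need for splitting the jump sums is immediate from $|\Delta^+f(s)|\le H\Delta^+g(s)$.
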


\begin{proof}

First observe that, since $f$ is $g$-continuous, $D_f\subset D_g$ --see \cite[Proposition~3.2]{POUSO2017}.
Separating the jump part from the continuous part in both $f$ and $g$ we have that	
	\begin{equation}
	\begin{array}{rcl}
	\displaystyle
	\int_a^b f(s) \, \dif g(s)&=&
	\displaystyle
	\int_a^b f^B(s) \, \dif g^B(s)+
	\int_a^b f^B(s)\, \dif g^C(s)

	\\ && \displaystyle
	+\int_a^b f^C(s)\, \dif g^B(s)
	+\int_a^b f^C(s)\, \dif g^C(s),
	\end{array}
	\end{equation}
where the three first integrals correspond to series and the forth can be approximated using the previous quadrature formulae. Indeed, using analogous reasoning as that in \cite[Theorems~6.3.12 and 6.3.13]{ANTUNES2019}:
	\begin{equation}
	\begin{array}{rcl}
	\displaystyle \int_a^b f^B(s)\, \dif g^B(s)&=& 
	\displaystyle f^B(a) \Delta^+ g(a)+\sum_{k\in\Lambda}
	f^B(d_k) \Delta^+ g(d_k)
	 \\
	\displaystyle \int_a^b f^B(s)\, \dif g^C(s)&=&
	\displaystyle \Delta^+ f(a) (g^C(b)-g^C(a)) \\ && \displaystyle +
	\sum_{k\in\Lambda}\Delta^+ f(d_k) (g^C(b)-g^C(d_k)), 
	 \\
	\displaystyle
	\int_a^b f^C(s)\, \dif g^B(s)&=&
	\displaystyle f^C(a) \Delta^+ g(a)+\sum_{k\in\Lambda}
	f^C(d_k) \Delta^+ g(d_k).
	\end{array}
	\end{equation}
	Taking that into account,
	\begin{equation}
	\begin{array}{rcl}
	\displaystyle \int_a^b f(s) \, \dif g(s)&=&
	\displaystyle f(a) \Delta^+ g(a)+\sum_{k\in\Lambda}
	f(d_k) \Delta^+ g(d_k) \\
	&&\displaystyle +
	\Delta^+ f(a) (g^C(b)-g^C(a)) \\ && +\displaystyle
	\sum_{k\in\Lambda}\Delta^+ f(d_k) (g^C(b)-g^C(d_k))
	\\
	&&\displaystyle +
	\int_a^b f^C(s) \dif g^C(s).
	\end{array}
	\end{equation}
	Now, using the same argumentation as before,
	\begin{equation}
	\left|\frac{f^C(a)+f^C(b)}{2}(g^C(b)-g^C(a))-\int_a^b f^C(s) \, \dif g^C(s) \right|
	\leq \frac{H^2}{2} (b-a)^2,
	\end{equation}
	since $\Var_a^b f^C \leq H(b-a)$. The proof of identity~\eqref{eq:ec12} is analogous.
\end{proof}

\section{Description of the numerical method}\label{DNM}

In this section we present a predictor-corrector method based on the previous quadrature formulae. We will assume $g:[0,T]\to\bR$ is continuous at $t_0=0$ and that $D_g$ is finite. Let $x\in \mathcal{BC}_g([0,T])$ a solution of problem (\ref{eq:ec1}) and, in what follows, let $f_*(x)(t):=f(t,x(t))$,
\begin{equation}
\begin{aligned}
 f_*^B(x)(t):= & \sum_{s\in[a,t)}\Delta^+ f_*(x)(s),\\
 f_*^C(x)(t):= & f_*(x)(t)-f_*^B(x)(t).\\
\end{aligned}
\end{equation}
Consider now a set $\{t_k\}_{k=0}^{N+1}\subset [0,T]$ satisfying
\begin{itemize}
	\item[(H4)] $t_0=0$ and $t_{N+1}=b$; $t_{k+1}-t_k=h>0$, for every 
	$k=0,\ldots,N$ and $D_g \subset \{t_k\}_{k=0}^{N+1}$. We denote by 
	$K_1=\max\{\Delta^+g(d):\;d\in D_g\}$.
\end{itemize}
We also assume that
\begin{itemize}
\item[(H5)] $f_*(x)$ is $g$-Lipschitz continuous, in 
	particular, $D_{f_*(x)}\subset D_g$. $f_*^C(x)$ and $g^C$ are 
	Lipschitz-continuous with the same Lipschitz constant $H$.
\item[(H6)] $f(\cdot,c)\in \mathcal{BC}_g([0,T])$, for every $c\in \mathbb{R}$.
\end{itemize}
Let use define $x_k:=x(t_k)$ and $x_k^+:=x(t_k^+)$, for $k=0,\ldots,N+1$. By 
the definition of the Stieltjes derivative,
\begin{equation}
x_k^+=x_k+\Delta^+ g(t_k) f_*(x)(t_k),
\end{equation}
and, in the particular case $t_k \not\in D_g$, $\Delta^+g(t_k)=0$, so 
we have that $x_k^+=x_k$. Then, for every $k=0,\ldots,N+1$,
\begin{equation} \label{eq:ec5}
x_{k+1}=x_k+\int_{[t_k,t_{k+1})} f_*(x)(s)\, \dif g(s),
\end{equation}
where the integral is of Kurzweil-Stieltjes type. Using~\eqref{eq:ec12} on each interval,
\begin{equation} \label{eq:ec6}
\begin{array}{rcl}
\displaystyle \int_{[t_k,t_{k+1})} f_*(x)(s)\, \dif g(s) &\simeq & 
\displaystyle f_*^C(x)(t_k)(g^C(t_{k+1})-g^C(t_k))
 \\ && \displaystyle
+f_*(x)(t_k)\Delta^+ g(t_k)
 \\ && \displaystyle
+ \Delta^+ f_*(x)(t_k)(g^C(t_{k+1})-g^C(t_k)).
\end{array}
\end{equation}
Thus, in the case we use~\eqref{eq:ec13}, we have 
\begin{equation} \label{eq:ec7}
\begin{array}{rcl}
\displaystyle \int_{[t_k,t_{k+1})} f_*(x)(s)\, \dif g(s) & \simeq & 
\displaystyle \frac{f_*^C(x)(t_k)+f_*^C(x)(t_{k+1})}{2} (g^C(t_{k+1})-g^C(t_k))
 \\ && \displaystyle
+f_*(x)(t_k)\Delta^+ g(t_k)\\ && \displaystyle 
+ \Delta^+ f_*(x)(t_k)(g^C(t_{k+1})-g^C(t_k)).
\end{array}
\end{equation}
Observe that the condition $D_g \subset \{t_k\}_{k=0}^{N+1}$ implies that, on each interval, 
the quadrature formulae lose the terms related to the interior jumps. Restricting to $[t_k,t_{k+1}]$, we have
	\begin{equation}
	\begin{array}{rcl}
	\displaystyle 
	g^C(t)
	&=&
	\displaystyle g(t)-\chi_{(t_k,t_{k+1}]} \Delta^+ g(t_k),
	 \\
	\displaystyle 
	f_*^C(x)(t)
	&=&
	\displaystyle
	f_*(x)(t)-\chi_{(t_k,t_{k+1}]} \Delta^+ f_*(x)(t_k).
	\end{array}
	\end{equation}
Hence,
	\begin{equation}
	\begin{array}{rcl}
	\displaystyle \Delta^+ g(t_k)
	&=&
	\displaystyle
	g(t_k^+)-g(t_k),\\
	\displaystyle 
	g^C(t_k)
	&=&
	\displaystyle 
	g(t_k), \\
	\displaystyle g^C(t_{k+1})&=&\displaystyle
	g(t_{k+1})-g(t_k^+)+g(t_k),
	\end{array}
	\end{equation}
and
	\begin{equation}
	\begin{array}{rcl}	
	\displaystyle 
	\Delta^+ f_*(x)(t_k) &=&
	\displaystyle 
	f(t_k^+,x_k^+)-f(t_k,x_k), \\
	\displaystyle 
	f^C_*(x)(t_k)&=&\displaystyle f(t_k,x_k), \\
	\displaystyle 
	f^C_*(x)(t_{k+1})&=&\displaystyle
	f(t_{k+1},x_{k+1})-f(t_k^+,x_k^+)+f(t_k,x_k).
	\end{array}
	\end{equation}
Taking into account the previous formulae, it is transparent that, if we want to 
use~\eqref{eq:ec7} to approximate (\ref{eq:ec5}), the a-priori ignorance 
of the value $x_{k+1}$ forces us to estimate it. In order to do this we just 
have to use~\eqref{eq:ec6}, for which no further estimation is needed. 
That is, we will use~\eqref{eq:ec6} as predictor and~\eqref{eq:ec7} as corrector. 
Thus, the method will be as follows. Given 
$u_0=x_0$, we compute $\{(u_{k-1}^+,u_k^*,u_k)\}_{k=1}^{N+1}$ as
\begin{equation}\label{eq:ec11}
\left\{\begin{array}{rcl}
 u_k^+ &=&\displaystyle u_k+f(t_k,u_k)\Delta^+ g(t_k), \\
u_{k+1}^*&=&\displaystyle u_k^+ + f(t_k^+, u_k^+ )(g(t_{k+1})-g(t_k^+)), 
\\
u_{k+1} &=& \displaystyle u_k^+ + \frac{1}{2} \left(f(t_k^+, u_k^+ )+f(t_{k+1},
u_{k+1}^*)\right)(g(t_{k+1})-g(t_k^+)).
\end{array}\right.
\end{equation}

\section{Error analysis}\label{EA}

In this section we analyze the numerical method introduced in the previous one. As it happens with those numerical methods based on quadrature formulae --cf. \cite{ISAACSON1996,KINCAID1996}, it will be crucial at this point to study the error of approximating the integral in this way.

As before, we will need certain regularity hypotheses on the derivator $g$ as well 
as on the function $f$ when composed with the solution of the problem. Thus, 
we will assume the hypotheses (H1), (H2) and (H3) necessary  to guarantee the existence of 
problem (\ref{eq:ec1}) --see page \pageref{sh}; the hypotheses (H4), (H5) and (H6) 
established in the previous sections in order to formulate the numerical method and the following additional hypotheses for 
proving the convergence of the method: 
\begin{enumerate}
	\item[(H7)] $f(t,\cdot)\in\mathcal{C}^1(\mathbb{R})$ for every $t \in \mathbb{R}$ and 
	there exits $K_2\in\bR$ such that \[\left| \frac{\partial}{\partial x}f(t,x)\right|<K_2\]
	for every $(t,x) \in [0,T]\times\mathbb{R}$. 
	\item[(H8)] $f(t^+,\cdot) \in 
	\mathcal{C}^1(\mathbb{R})$ for every $t \in \mathbb{R}$ and 
	there exits $K_3\in\bR$ such that \[\left| \frac{\partial}{\partial x}f(t^+,x)\right|<K_3\]
	for every $(t,x) \in [0,T]\times\mathbb{R}$. 	
\end{enumerate}
We must emphasize that the above hypotheses are not independent. For example, hypothesis (H6) implies (H1)-(H2) and 
(H7) implies (H3). Therefore, for our purposes it is sufficient that the hypotheses (H4)-(H8) are fulfilled. We will now 
establish the basic notions related to the truncating error associated to the quadrature formula of the predictor-corrector method.

\begin{dfn}[Local truncating error associated to the quadrature formula] Given a partition $P$ satisfying (H4), we define:
	\begin{itemize}
		\item The local error associated to the formula (\ref{eq:ec12}) as
		\begin{equation}
	\sigma_{k+1}^*:=	x_{k+1}-x_k^+ -f(t_k^+,x_k^+ ) (g(t_{k+1})-g(t_k^+)),
		\end{equation}
		with $k=0,\ldots,N$.
		\item The local error associated to the formula
		 (\ref{eq:ec13}) as
		\begin{equation}
			\sigma_{k+1}:=x_{k+1}-x_k^+ - \frac{1}{2}\left(f(t_k^+,x_k^+ )+f(t_{k+1},x_{k+1})\right) 
		(g(t_{k+1})-g(t_k^+)),
		\end{equation}
		with $k=0,\ldots,N$.
		\item Letting \[x_{k+1}^*:=x_{k+1}-\sigma_{k+1}^*=
		x_k^+ +f(t_k^+,x_k^+ ) (g(t_{k+1})-g(t_k^+)),\] we define the local truncating error of the predictor-corrector method associated to (\ref{eq:ec11}), in terms of the exact solution, as
		\begin{equation}
		\begin{array}{c}
		\displaystyle 
	\tau_{k+1}:=	x_{k+1}-x_k^+ - \frac{1}{2}\left(f(t_k^+,x_k^+ )+f(t_{k+1},x_{k+1}^*)\right) 
		(g(t_{k+1})-g(t_k^+)),
		\end{array}
		\end{equation}
		with $k=0,\ldots,N$.
	\end{itemize}
\end{dfn}

\begin{rem}It is usual to find in the literature local truncating errors defined relative to the discretization step $h$, that is, $\widetilde{\sigma}_{k+1}^*=\sigma_{k+1}^*/h$, 
	$\widetilde{\sigma}_{k+1}=\sigma_{k+1}/h$ and $\widetilde{\tau}_{k+1}=
	\tau_{k+1}/h$ --cf. \cite{ISAACSON1996,KINCAID1996}. In those cases, for $k=0,\ldots,N$,
	\begin{equation}
	\begin{aligned}
	x_{k+1}= &
	x_k^+ +f(t_k^+,x_k^+ ) (g(t_{k+1})-g(t_k^+))+h \widetilde{\sigma}_{k+1}^*,
	 \\
	x_{k+1}=& x_k^+ + \frac{1}{2}\left(f(t_k^+,x_k^+ )+
	f(t_{k+1},x_{k+1})\right) 
	(g(t_{k+1})-g(t_k^+)) + h\widetilde{\sigma}_{k+1}, \\
	x_{k+1}=&  x_k^+ + \frac{1}{2}\left(f(t_k^+,x_k^+ )+
	f(t_{k+1},x_{k+1}^*)\right) (g(t_{k+1})-g(t_k^+))+h\widetilde{\tau}_{k+1}.
	\end{aligned}
	\end{equation}
	We have opted for the first set of errors in order to simplify the notation. In any case, the relation between both definitions is clear.
\end{rem}

Now we present some bounds of the errors.

\begin{lem} \label{lem2}
	For every $k=0,\ldots,N$ we have the following bounds:
	\begin{itemize}
		\item $\displaystyle |\sigma_{k+1}^*| \leq H^2 h^{2}$.
		\item $\displaystyle |\sigma_{k+1}|\leq \frac{1}{2}H^2 h^{2}$.
		\item $\displaystyle |\tau_{k+1}| \leq \frac{1}{2} H^2 h^2 + \frac{1}{2} K_2 H^3 h^3$.
	\end{itemize}

\end{lem}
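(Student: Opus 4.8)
The plan is to derive all three bounds from Lemma~\ref{lem4}, applied on each subinterval $[t_k,t_{k+1}]$ with $b-a=h$. The crucial simplifying observation is that, by (H4), $D_g\subset\{t_k\}_{k=0}^{N+1}$, so on the open interval $(t_k,t_{k+1})$ there are no discontinuities of $g$ (and hence, since $D_{f_*(x)}\subset D_g$, none of $f_*(x)$ either). Consequently the interior sums $\sum_{k\in\Lambda}$ appearing in \eqref{eq:ec12} and \eqref{eq:ec13} are empty, and the only jump contributions that survive are those at the left endpoint $t_k$. This reduces the two quadrature estimates of Lemma~\ref{lem4} to expressions involving just the nodal data $g(t_k),\,g(t_k^+),\,g(t_{k+1})$ and $f(t_k,x_k),\,f(t_k^+,x_k^+),\,f(t_{k+1},x_{k+1})$.

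First I would treat $\sigma_{k+1}^*$ and $\sigma_{k+1}$ together. Substituting into \eqref{eq:ec12}--\eqref{eq:ec13} the restriction identities of Section~\ref{DNM}, namely $g^C(t_k)=g(t_k)$, $g^C(t_{k+1})-g^C(t_k)=g(t_{k+1})-g(t_k^+)$, $\Delta^+g(t_k)=g(t_k^+)-g(t_k)$, together with $f_*^C(x)(t_k)=f(t_k,x_k)$, $\Delta^+f_*(x)(t_k)=f(t_k^+,x_k^+)-f(t_k,x_k)$ and $f_*^C(x)(t_{k+1})=f(t_{k+1},x_{k+1})-f(t_k^+,x_k^+)+f(t_k,x_k)$, the bracketed quadrature sums collapse after the $f(t_k,x_k)$ terms cancel. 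Using $x_k^+-x_k=f(t_k,x_k)\Delta^+g(t_k)$ to recognize the residual left-endpoint contribution as $x_k^+-x_k$, and using $\int_{[t_k,t_{k+1})}f_*(x)\,\dif\mu_g=x_{k+1}-x_k$ by \eqref{eq:ec5} and Lemma~\ref{lemfi}, the sum in \eqref{eq:ec12} becomes exactly $x_k^+ + f(t_k^+,x_k^+)(g(t_{k+1})-g(t_k^+))-x_{k+1}=-\sigma_{k+1}^*$, and the sum in \eqref{eq:ec13} becomes $-\sigma_{k+1}$. Hence \eqref{eq:ec12} and \eqref{eq:ec13} with $b-a=h$ give $|\sigma_{k+1}^*|\le H^2h^2$ and $|\sigma_{k+1}|\le\tfrac12 H^2h^2$ immediately.

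For $\tau_{k+1}$ I would compare it with $\sigma_{k+1}$, since the two differ only in the argument of the final $f$-evaluation. Writing
\[
\tau_{k+1}=\sigma_{k+1}-\tfrac12\bigl(f(t_{k+1},x_{k+1}^*)-f(t_{k+1},x_{k+1})\bigr)\bigl(g(t_{k+1})-g(t_k^+)\bigr),
\]
I bound the first term by $\tfrac12 H^2h^2$. For the second, hypothesis (H7) makes $f(t_{k+1},\cdot)$ Lipschitz with constant $K_2$, so $|f(t_{k+1},x_{k+1}^*)-f(t_{k+1},x_{k+1})|\le K_2|x_{k+1}^*-x_{k+1}|=K_2|\sigma_{k+1}^*|\le K_2H^2h^2$, using $x_{k+1}^*-x_{k+1}=-\sigma_{k+1}^*$ and the first bound. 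Finally, $|g(t_{k+1})-g(t_k^+)|=|g^C(t_{k+1})-g^C(t_k)|\le Hh$ by Lipschitz continuity of $g^C$ (H5). The triangle inequality then yields $|\tau_{k+1}|\le\tfrac12 H^2h^2+\tfrac12 K_2H^3h^3$.

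I expect the main obstacle to be purely the algebraic bookkeeping in the second step: verifying that, once the interior-jump sums are dropped, the combination of continuous-part quadrature term, left-endpoint jump term $f_*(x)(t_k)\Delta^+g(t_k)$, and the mixed term $\Delta^+f_*(x)(t_k)(g^C(t_{k+1})-g^C(t_k))$ in \eqref{eq:ec12}--\eqref{eq:ec13} reassembles \emph{exactly} into the predictor and corrector expressions defining $\sigma_{k+1}^*$ and $\sigma_{k+1}$, with no leftover terms. The potential pitfall is mishandling the jump of $g$ at the left node $t_k$ (which is genuinely present, as $t_k$ may lie in $D_g$); everything hinges on the identity $x_k^+-x_k=f(t_k,x_k)\Delta^+g(t_k)$ absorbing precisely that contribution. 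Once this cancellation is checked, the three estimates are direct corollaries of Lemma~\ref{lem4} and (H5), (H7).
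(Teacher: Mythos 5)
Your proposal is correct and follows essentially the same route as the paper: the first two bounds come from applying Lemma~\ref{lem4} on $[t_k,t_{k+1}]$ together with the restriction identities of Section~\ref{DNM} (the paper simply declares these a direct consequence, while you verify the endpoint-jump cancellation explicitly), and the third from the identity $\tau_{k+1}=\sigma_{k+1}-\tfrac{1}{2}\bigl(f(t_{k+1},x_{k+1}^*)-f(t_{k+1},x_{k+1})\bigr)\bigl(g(t_{k+1})-g(t_k^+)\bigr)$. Your $K_2$-Lipschitz bound via (H7) is the paper's Mean Value Theorem step in equivalent form, and your estimate $|g(t_{k+1})-g(t_k^+)|=|g^C(t_{k+1})-g^C(t_k)|\le Hh$ is the same bound the paper uses implicitly.
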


\begin{proof} The two first assertions are a direct consequence of Lemma~\ref{lem4}. In order to obtain the third one, we manipulate the definition of $\tau_{k+1}$ leaving
	\begin{equation}
	\begin{aligned}
	\displaystyle x_{k+1}=& \displaystyle x_k^+ +
	\frac{1}{2}\left( 
	f(t_k^+,x_k^+ )+f(t_{k+1},x_{k+1}^*)
	\right) (g(t_{k+1}-g(t_k^+))+\tau_{k+1} \\
=&\displaystyle
	x_k^+ + \frac{1}{2}\left(f(t_k^+,x_k^+ )+f(t_{k+1},x_{k+1})\right) 
	(g(t_{k+1})-g(t_k^+)) \\
	& \displaystyle + 
	\frac{1}{2}\left(f(t_{k+1},x_{k+1}^*)-f(t_{k+1},x_{k+1})\right) 
	(g(t_{k+1})-g(t_k^+))+\tau_{k+1},
	\end{aligned}
	\end{equation}
wherefrom we obtain, using the definition of $\sigma_{k+1}$,
	\begin{equation}
	\sigma_{k+1}=\frac{1}{2}\left(f(t_{k+1},x_{k+1}^*)-f(t_{k+1},x_{k+1})\right) 
	(g(t_{k+1})-g(t_k^+))+\tau_{k+1}.
	\end{equation}
	By the Mean Value Theorem of Differential Calculus, there exists $c_{k+1}$ in the open interval of extremities 
	$x_{k+1}$, $x_{k+1}^*$ such that
	\begin{equation}
	\sigma_{k+1}+\frac{1}{2}\frac{\partial f}{\partial x}(t_{k+1},c_{k+1}) 
	\sigma_{k+1}^*(g(t_{k+1})-g(t_k^+))=\tau_{k+1},
	\end{equation}
	thence, taking the absolute value,
	\begin{equation}\label{eqrefd}
	|\tau_{k+1}| \leq |\sigma_{k+1}|+\frac{1}{2} K_2 H|\sigma_{k+1}^*| h.
	\end{equation}
Using the bounds obtained for $|\sigma_{k+1}|$ and $|\sigma_{k+1}^*|$,
	\begin{equation}
	|\tau_{k+1}| \leq \frac{1}{2} H^2 h^2 + \frac{1}{2} K_2 H^3 h^3.
	\end{equation}
\end{proof}

\begin{cor}[Consistence of the numerical method] In the functional framework in which Lemma~\ref{lem2} is valid the method is consistent.
\end{cor}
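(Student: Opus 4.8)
The plan is to reduce the statement to the normalized truncation-error bound already contained in Lemma~\ref{lem2}. Recall that the predictor-corrector scheme \eqref{eq:ec11} is declared consistent precisely when its relative local truncating error vanishes under mesh refinement, i.e. when $\max_{0\le k\le N}|\widetilde{\tau}_{k+1}|\to 0$ as $h\to 0^+$, where $\widetilde{\tau}_{k+1}=\tau_{k+1}/h$ is the step-normalized error introduced in the remark following the definition of $\tau_{k+1}$. So the first thing I would do is fix this notion explicitly, since we are in the Stieltjes framework rather than the classical one, and check that every constant appearing below is uniform in the subinterval index $k$.

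Next I would invoke the third bound of Lemma~\ref{lem2}, which holds uniformly for every $k=0,\dots,N$ under the functional framework (the hypotheses (H4)--(H8), with $H$ coming from (H5) and $K_2$ from (H7)). Dividing that bound by the step $h>0$ yields
\[
|\widetilde{\tau}_{k+1}|=\frac{|\tau_{k+1}|}{h}\le \frac{1}{2}H^2 h+\frac{1}{2}K_2 H^3 h^2,
\]
and the right-hand side is independent of $k$. Taking the supremum over $k=0,\dots,N$ and letting $h\to 0^+$, the bound $\tfrac{1}{2}H^2 h+\tfrac{1}{2}K_2 H^3 h^2$ tends to $0$ because $H$ and $K_2$ are fixed constants. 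Hence $\max_{0\le k\le N}|\widetilde{\tau}_{k+1}|\to 0$, which is exactly consistency (indeed, the $O(h)$ decay shows the method is consistent of order one).

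I do not expect any genuine obstacle here: all of the analytic labor was already performed in Lemma~\ref{lem4} and transported to the local errors in Lemma~\ref{lem2}, so this corollary is essentially a restatement of the uniform $O(h^2)$ estimate on $\tau_{k+1}$ in its normalized $O(h)$ form. The only points deserving care are (i) confirming that the bound of Lemma~\ref{lem2} is valid simultaneously on every subinterval $[t_k,t_{k+1}]$ — which holds because (H4) forces $D_g\subset\{t_k\}_{k=0}^{N+1}$, so each subinterval has no interior jumps and the hypotheses of Lemma~\ref{lem4} apply verbatim — and (ii) making explicit that the resulting bound does not depend on $k$, so that passing to the maximum and then to the limit in $h$ is legitimate.
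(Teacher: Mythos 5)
Your proof is correct and is essentially identical to the paper's: both take the uniform bound $|\tau_{k+1}|\le \tfrac{1}{2}H^2h^2+\tfrac{1}{2}K_2H^3h^3$ from Lemma~\ref{lem2}, pass to the normalized errors $\widetilde{\tau}_{k+1}=\tau_{k+1}/h$, and conclude $\lim_{h\to 0}\max_{0\le k\le N}|\widetilde{\tau}_{k+1}|\le \lim_{h\to 0}\left(\tfrac{1}{2}H^2h+\tfrac{1}{2}K_2H^3h^2\right)=0$. Your additional remarks on uniformity in $k$ and on (H4) ensuring no interior jumps simply make explicit what the paper leaves implicit.
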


\begin{proof} Indeed, thanks to the bounds provided by Lemma~\ref{lem2}, we obtain
	\begin{equation}
	0\leq \lim_{h \to 0} \max_{0\leq n \leq N} |\widetilde{\tau}_{n+1}|\leq 
	\lim_{h \to 0} 
	\frac{1}{2} H^2 h + \frac{1}{2} K_2 H^3 h^2=0,
	\end{equation}
	wherefrom we deduce the consistency of the method in the classical sense.
\end{proof}

\begin{rem}In view of the bounds in Lemma~\ref{lem2}  we must observe that the introduction of a predictor in the quadrature formula does not penalize its convergence order. This is due to the fact that  $\sigma_{k+1}^*$ (which is the predictor term in the formula) appears multiplied by $h$ in~\eqref{eqrefd}.

In our case, due to the regularity of the terms involved, we are not capable of improving the order of convergence of the two-point formula with respect to the one-point one. This is not usually the case, as in the literature we can see examples 
--for instance \cite{ISAACSON1996,KINCAID1996}-- where the two-point quadrature formula has a better convergence 
order --without the predictor penalizing the global order of the method-- than the one-point one.	

\end{rem}

\begin{dfn}[Local error of the algorithm]
	We define the following errors associated to the numerical algorithm.
	\begin{itemize}
		\item $e_k^+ := u_k^+ -x_k^+ $, with $k=0,\ldots,N$, is the local error of the corrector 
		regarding the limit from the right at $t_k$. It is clear that it does not make sense to consider 
		this error for $k=N+1$.
		\item $e_{k}^*=u_{k}^*-x_{k}^*$, where $k=1,\ldots,N+1$, is the local error of the predictor at 
		the point $t_k$.
		\item $e_{k}=u_{k}-x_{k}$, with $k=0,\ldots,N+1$ the local error of the predictor at the point $t_k$ and 
		$e_0$ is the error associated to the initial condition.
	\end{itemize}
\end{dfn}

In Lemma~\ref{lem3} we obtain bounds for the previous lemmata based on recurrence formulae which, afterwards, we will analyze in order to obtain bounds of the error at each of the points of the temporal discretization.

\begin{lem} \label{lem3} Under the hypotheses of Lemma~\ref{lem2}, we derive the following formulae for $e_{k+1}^*$, $e_{k+1}$ and 
$e_k^+$, with $k=0,\ldots,N$,
	\begin{itemize}
		\item $\displaystyle |e_k^+ | \leq \left[1+ K_1K_2 \chi_{D_g} (t_k)\right] |e_k|$.
		\item $\displaystyle |e_{k+1}^*| \leq \left[1+K_3 H h+(1+K_3 H h)K_1K_2 \chi_{D_g}(t_k) \right]|e_k|$.
		\item $\displaystyle |e_{k+1}| \leq \left[1+G_1+G_2 \chi_{D_g}(t_k) \right] 
		|e_k|+\tau_{k+1}$, where:
		\begin{equation}
		\begin{aligned}
		G_1=& \displaystyle 
		\frac{1}{2} K_2 H h +\frac{1}{2} K_3 H h + \frac{1}{2} K_3^2 H^2 h^2, 
		 \\ 
		G_2=& \displaystyle K_1 K_2 + 
		\frac{1}{2} K_1 K_2 K_3 H h+\frac{1}{2}K_1 K_2^2 H h + 
		\frac{1}{2}K_1 K_2^2K_3 H^2h^2.
		\end{aligned}
		\end{equation}
	\end{itemize}
\end{lem}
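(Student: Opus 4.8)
The plan is to derive each recurrence by subtracting the algorithmic update~\eqref{eq:ec11} from the exact relation satisfied by the solution at the same stage, and then to estimate the resulting differences of $f$ by the Mean Value Theorem together with (H7) and (H8). Two scalar bounds are used repeatedly: by (H4), $\Delta^+ g(t_k)$ vanishes unless $t_k\in D_g$, in which case $\Delta^+ g(t_k)\le K_1$; and since $g^C$ is Lipschitz with constant $H$ while $t_{k+1}-t_k=h$, one has $|g(t_{k+1})-g(t_k^+)|=|g^C(t_{k+1})-g^C(t_k)|\le Hh$. The truncation term $\tau_{k+1}$ is simply carried along, its magnitude already being controlled by Lemma~\ref{lem2}.

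First I would treat $e_k^+$. Subtracting the first line of~\eqref{eq:ec11} from $x_k^+=x_k+f(t_k,x_k)\Delta^+ g(t_k)$ gives $e_k^+=e_k+(f(t_k,u_k)-f(t_k,x_k))\Delta^+ g(t_k)$. Bounding the $f$-difference by $K_2|e_k|$ through (H7) and the Mean Value Theorem, and using that $\Delta^+ g(t_k)$ is nonzero only on $D_g$ where it is at most $K_1$, yields $|e_k^+|\le(1+K_1K_2\chi_{D_g}(t_k))|e_k|$, the indicator recording the presence of a jump. Next, for $e_{k+1}^*$, subtracting the predictor lines gives $e_{k+1}^*=e_k^+ +(f(t_k^+,u_k^+)-f(t_k^+,x_k^+))(g(t_{k+1})-g(t_k^+))$. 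Estimating the $f$-difference by $K_3|e_k^+|$ via (H8) and multiplying by $|g(t_{k+1})-g(t_k^+)|\le Hh$ gives $|e_{k+1}^*|\le(1+K_3Hh)|e_k^+|$; inserting the first bound produces the asserted estimate for $e_{k+1}^*$ in terms of $|e_k|$.

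The third bound is the crux. Against the identity $x_{k+1}=x_k^+ +\tfrac12(f(t_k^+,x_k^+)+f(t_{k+1},x_{k+1}^*))(g(t_{k+1})-g(t_k^+))+\tau_{k+1}$ — which is just the definition of $\tau_{k+1}$ — the corrector update of~\eqref{eq:ec11} gives $e_{k+1}=e_k^+ +\tfrac12[(f(t_k^+,u_k^+)-f(t_k^+,x_k^+))+(f(t_{k+1},u_{k+1}^*)-f(t_{k+1},x_{k+1}^*))](g(t_{k+1})-g(t_k^+))-\tau_{k+1}$. I would bound the first $f$-difference by $K_3|e_k^+|$ via (H8) and the second by $K_2|e_{k+1}^*|$ via (H7); here the decisive point is that the corrector samples $f$ at $t_{k+1}$, not at $t_{k+1}^+$, so the governing constant there is $K_2$. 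Using $|e_{k+1}^*|\le(1+K_3Hh)|e_k^+|$ from the second step and $|g(t_{k+1})-g(t_k^+)|\le Hh$ collapses the bracket to a clean multiple of $|e_k^+|$, giving $|e_{k+1}|\le(1+G_1)|e_k^+|+|\tau_{k+1}|$, with $G_1$ gathering the jump-free $O(h)$ corrections built from $K_2,K_3,H$.

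Substituting $|e_k^+|\le(1+K_1K_2\chi_{D_g}(t_k))|e_k|$ and expanding then yields $|e_{k+1}|\le(1+K_1K_2\chi_{D_g}(t_k))(1+G_1)|e_k|+|\tau_{k+1}|$, i.e. the stated recurrence with the structural relation $G_2=K_1K_2(1+G_1)$. The main obstacle is purely the bookkeeping: I must keep the single factor $1+K_1K_2\chi_{D_g}(t_k)$ coming from $e_k^+$ intact rather than expanding prematurely, so that the jump-free and jump-dependent contributions separate exactly into $G_1$ and $G_2$, and I must track carefully which of $K_2$ and $K_3$ governs each $f$-increment. Carrying out the expansion and collecting by powers of $h$ and by the indicator then reproduces the explicit constants $G_1$ and $G_2$ recorded in the statement.
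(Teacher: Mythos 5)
Your proposal is correct and follows essentially the same route as the paper's proof: subtracting the scheme \eqref{eq:ec11} from the exact stage relations, applying the Mean Value Theorem with (H7)/(H8) (with $K_3$ governing the increment at $t_k^+$ and $K_2$ the one at $t_{k+1}$, as you rightly stress), bounding $\Delta^+ g(t_k)\le K_1\chi_{D_g}(t_k)$ and $g(t_{k+1})-g(t_k^+)=g^C(t_{k+1})-g^C(t_k)\le Hh$, and chaining the three recurrences. Your structural identity $G_2=K_1K_2(1+G_1)$ reproduces the paper's stated $G_2$ exactly, provided $G_1$ carries the cross term $\tfrac{1}{2}K_2K_3H^2h^2$ --- which is what both your computation and the paper's own displayed expansion actually yield, so the $\tfrac{1}{2}K_3^2H^2h^2$ appearing in the statement of $G_1$ is evidently a typo that your bookkeeping detects.
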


\begin{proof} We compute each of the error bounds separately.
	\begin{itemize}
		\item \emph{Local error of the corrector regarding the limit from the right}. We have that
		\begin{equation}
		\begin{aligned}
		e_k^+ 
		=& \displaystyle 
		u_k-x_k \\
		=& \displaystyle
		u_k+f(t_k,u_k) \Delta^+ g(t_k)-x_k-f(t_k,x_k)\Delta^+ g(t_k)
		 \\
		=& \displaystyle
		e_k+\left(f(t_k,u_k)-f(t_k,x_k) \right) \Delta^+ g(t_k)
		 \\
		=& \displaystyle
		e_k + \frac{\partial f}{\partial x} (t_k,c_k) e_k \Delta^+ g(t_k),
		\end{aligned}
		\end{equation}
		where $c_k$ belongs to the open interval of extremities
		$u_k$ and $x_k$. Taking the absolute value on both sides,
		\begin{equation}
		\begin{aligned}
		|e_k^+ | \leq & \displaystyle |e_k|+K_1K_2 |e_k| \chi_{D_k}(t_k), 
		 \\
		=& \displaystyle \left[1+K_1K_2 \chi_{D_g}(t_k)\right] |e_k|.
		\end{aligned}
		\end{equation}
		\item \emph{Local error of the predictor at the point}. We have
		\begin{equation}
		\begin{aligned}
		e_{k+1}^*=&\displaystyle u_{k+1}^*-x_{k+1} \\
		=&\displaystyle
		 u_k^+ + f(t_k^+, u_k^+ )(g(t_{k+1})-g(t_k^+))-x_k^+ \\ & \displaystyle-
		f(t_k^+,x_k^+ )(g(t_{k+1})-g(t_k^+)) \\
		=&\displaystyle e_k^+ + \left(f(t_k^+, u_k^+ )-f(t_k^+,x_k^+ ) \right)
		(g(t_{k+1})-g(t_k^+)) \\
		=&\displaystyle e_k^+ + \frac{\partial f}{\partial x}(t_k^+,c_k^+) e_k^+ 
		(g(t_{k+1})-g(t_k^+)),
		\end{aligned}
		\end{equation}
		where $c_k^+$ belongs to the open interval of extremities
		$ u_k^+ $ and $x_k^+ $. Taking the absolute value on both sides,
		\begin{equation}
		\begin{aligned}
		|e_{k+1}^*|\leq & \displaystyle \left[1+K_3 H h\right] |e_k^+ |
		 \\
		\leq& \displaystyle 
		\left[1+K_3 H h\right] \left[1+K_1K_2 \chi_{D_g}(t_k)\right] |e_k|
		 \\
		=& \displaystyle 
		\left[1+K_3 H h+(1+K_3 H h)K_1K_2 \chi_{D_g}(t_k) \right]|e_k|.
		\end{aligned}
		\end{equation}
		\item \emph{Local error of the predictor at the point}. We have that
		\begin{equation}
		\begin{aligned}
		|e_{k+1}|  \leq & \displaystyle 
		 u_k^+ + \frac{1}{2} (f(t_k^+, u_k^+ )+f(t_{k+1},
		u_{k+1}^*))(g(t_{k+1})-g(t_k^+)) \\
		& \displaystyle 
		-x_k^+ -
		\frac{1}{2}\left( 
		f(t_k^+,x_k^+ )+f(t_{k+1},x_{k+1}^*)
		\right) (g(t_{k+1}-g(t_k^+))-\tau_{k+1} \\
		=&\displaystyle |e_k^+ |+\frac{1}{2}\left(f(t_k^+, u_k^+ )-f(t_k^+,x_k^+ )\right)
		(g(t_{k+1})-g(t_k^+)) 
		 \\
		& + \displaystyle \frac{1}{2} \left(
		f(t_{k+1},u_{k+1}^*)-f(t_{k+1},x_{k+1}^*)
		\right)(g(t_{k+1})-g(t_k^+)) -\tau_{k+1} \\
		=&\displaystyle |e_k^+ | + \frac{1}{2} \frac{\partial f}{\partial x}(t_k^+,c_k^+)
		e_k^+ (g(t_{k+1})-g(t_k^+)) \\
		& \displaystyle + \frac{1}{2} \frac{\partial f}{\partial x}(t_{k+1},c_{k+1}^*)
		e_{k+1}^* (g(t_{k+1})-g(t_k^+)) -\tau_{k+1},
		\end{aligned}
		\end{equation}
		where $c_k^+$ belongs to the open interval of extremities 
		$ u_k^+ $, $x_k^+ $ and $c_{k+1}^*$ belongs to the open interval of extremities $u_{k+1}^*$ and $x_{k+1}^*$. Taking the absolute value on both sides,
		\begin{equation}
		\begin{aligned}
		|e_{k+1}|  \leq & \displaystyle \left[1+ \frac{1}{2} K_3 H h \right] |e_k^+ |+
		\frac{1}{2} K_2 H h |e_{k+1}^*| +\tau_{k+1} \\
		 \leq & \displaystyle
		\left[1+ \frac{1}{2} K_3 H h \right] \left[1+K_1K_2 \chi_{D_g}(t_k)\right] |e_k| 
		 \\
		& \displaystyle 
		+ \frac{1}{2} K_2 H h \left[1+K_3 H h+(1+K_3 H h)K_1K_2 \chi_{D_g}(t_k)
		\right] |e_k| \\ & +
		\tau_{k+1} \\ 
		=& \displaystyle
		\left[1+G_1+G_2 \chi_{D_g}(t_k) \right] |e_k|+\tau_{k+1},
		\end{aligned}
		\end{equation}
		where
		\begin{equation}
		\begin{aligned}
		G_1=& \displaystyle 
		\frac{1}{2} K_2 H h +\frac{1}{2} K_3 H h + \frac{1}{2} K_3^2 H^2 h^2, 
		 \\ 
		G_2=& \displaystyle K_1 K_2 + 
		\frac{1}{2} K_1 K_2 K_3 H h+\frac{1}{2}K_1 K_2^2 H h + 
		\frac{1}{2}K_1 K_2^2K_3 H^2h^2.
		\end{aligned}
		\end{equation}
	\end{itemize}
\end{proof}

\begin{rem} Observe that previous error formulae can be simplified in the case $t_k \notin D_g$. In this situation, those errors concerning the limit from the right coincide with the ones of the corrector at the point and we recover the classical error formulae.
\end{rem}

From the formulae in Lemma~\ref{lem3} we can prove the following result concerning the error of the numerical method.

\begin{lem} \label{lem4b} Under the hypotheses of Lemma~\ref{lem3}, we have, for $n=0,\ldots,N$,
	\begin{equation}
	|e_{n+1}|\leq \left[1+G_2 \right]^{\#D_g} \left[|e_0|+
	\frac{\tau}{G_1}\right] \exp\left[(n+1) G_1 \right].
	\end{equation}
	where $\tau=\max\{|\tau_k|:\; k=1,\dots,N+1\}$.
\end{lem}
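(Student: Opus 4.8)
The plan is to read the third bullet of Lemma~\ref{lem3} as a discrete Gronwall-type recurrence and to unroll it explicitly. Setting $a_k:=1+G_1+G_2\chi_{D_g}(t_k)$, the inequality
\begin{equation}
|e_{k+1}|\le a_k\,|e_k|+\tau_{k+1},\qquad k=0,\dots,N,
\end{equation}
iterates in the standard way to give, for every $n=0,\dots,N$,
\begin{equation}
|e_{n+1}|\le\left(\prod_{k=0}^{n}a_k\right)|e_0|+\sum_{j=0}^{n}\left(\prod_{k=j+1}^{n}a_k\right)\tau_{j+1},
\end{equation}
with the convention that an empty product equals $1$. It then remains only to estimate the homogeneous product $\prod a_k$ and the inhomogeneous weighted sum.

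The key observation is that each $\chi_{D_g}(t_k)$ takes the value $0$ or $1$, so in any such product the factor $1+G_1+G_2$ occurs exactly once for every node $t_k\in D_g$ in the relevant index range and the factor $1+G_1$ occurs otherwise. Since hypothesis (H4) forces $D_g\subset\{t_k\}$, the number of these exceptional factors never exceeds $\#D_g$. Using the elementary inequality
\begin{equation}
1+G_1+G_2=(1+G_1)\left(1+\frac{G_2}{1+G_1}\right)\le(1+G_1)(1+G_2),
\end{equation}
valid because $G_1\ge0$, each occurrence of $1+G_1+G_2$ splits into a factor $1+G_1$ (absorbed into the exponential growth) and a factor $1+G_2$ (absorbed into the jump count). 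Combined with $1+x\le e^x$ this yields
\begin{equation}
\prod_{k=j+1}^{n}a_k\le(1+G_2)^{\#D_g}(1+G_1)^{n-j}\le(1+G_2)^{\#D_g}\exp\!\left[(n-j)G_1\right];
\end{equation}
the choice $j=-1$ gives $(1+G_2)^{\#D_g}\exp[(n+1)G_1]$ for the coefficient of $|e_0|$.

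For the inhomogeneous part I would bound each $\tau_{j+1}$ by $\tau$ and sum a geometric series,
\begin{equation}
\sum_{j=0}^{n}\prod_{k=j+1}^{n}a_k\le(1+G_2)^{\#D_g}\sum_{i=0}^{n}(1+G_1)^{i}=(1+G_2)^{\#D_g}\,\frac{(1+G_1)^{n+1}-1}{G_1},
\end{equation}
and, discarding the $-1$ and applying $1+x\le e^x$ once more, this is at most $(1+G_2)^{\#D_g}\exp[(n+1)G_1]/G_1$. Collecting the two contributions produces exactly
\begin{equation}
|e_{n+1}|\le(1+G_2)^{\#D_g}\left[|e_0|+\frac{\tau}{G_1}\right]\exp\!\left[(n+1)G_1\right],
\end{equation}
as claimed. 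The only genuinely delicate step is the factorization inequality that disentangles the two growth mechanisms: the continuous exponential accumulation governed by $G_1$, which must be controlled uniformly in $n$, and the finitely many multiplicative jumps governed by $G_2$, which must be shown to contribute only the bounded constant $(1+G_2)^{\#D_g}$ rather than to corrupt the exponential estimate. Everything else is routine iteration and a geometric summation.
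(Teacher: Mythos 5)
Your proof is correct and follows essentially the same route as the paper: unroll the recurrence from Lemma~\ref{lem3}, separate the at most $\#D_g$ jump factors from the $(1+G_1)$ factors multiplicatively, sum the geometric series, and finish with $1+x\le e^x$. The only cosmetic difference is the direction of the factorization --- you write $1+G_1+G_2\le(1+G_1)(1+G_2)$ while the paper extracts $\bigl[1+G_2\chi_{D_g}(t_k)\bigr]\bigl[1+G_1/(1+G_2\chi_{D_g}(t_k))\bigr]$ --- which changes nothing of substance.
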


\begin{proof} Using the notation of Lemma~\ref{lem3}, we have that
	\begin{equation}
	|e_{n+1}| \leq \left[1+G_1+G_2 \chi_{D_g}(t_n) \right] |e_n|+\tau.
	\end{equation}
 Thus, applying the previous bound recursively,
	\begin{equation}
	\begin{aligned}
	\displaystyle
	|e_{n+1}| \leq & \displaystyle
	\prod_{k=0}^n \left[1+G_1+G_2 \chi_{D_g}(t_k) \right] |e_0| \\ & +
	\sum_{k=1}^n \prod_{j=k}^n\left[1+G_1+G_2 \chi_{D_g}(t_j) \right]\tau+\tau 
	 \\
=&\displaystyle \prod_{k=0}^n \left[1+G_2 \chi_{D_g}(t_k) \right]
	\left[1+\frac{G_1}{1+G_2 \chi_{D_g}(t_k)} \right] |e_0| \\
	& \displaystyle + 
	\sum_{k=1}^n \prod_{j=k}^n\left[1+G_2 \chi_{D_g}(t_j) \right]
	\left[1+\frac{G_1}{1+G_2 \chi_{D_g}(t_j)} \right] \tau + \tau.
	\end{aligned}
	\end{equation}
	Accounting for the number discontinuities of the derivator (which we denote $\# D_g$) we obtain
	\begin{equation}
	\displaystyle |e_{n+1}| \leq 
	\displaystyle 
	\left[1+G_2 \right]^{\#D_g} \left[1+G_1 \right]^{n+1} 
	+ \left[1+G_2 \right]^{\#D_g} \sum_{k=0}^n 
	\left[1+G_1\right]^k \tau .
	\end{equation}
 Now, taking into account that, for a given number $G\geq 0$, 
	\begin{equation}
	1+\sum_{k=0}^n G(1+G)^k=(1+G)^{n+1}
	\end{equation}
	and that $1+G \leq \exp(G)$, we have
	\begin{equation}
	\begin{aligned}
	|e_{n+1}| \leq & \displaystyle 
	\left[1+G_2 \right]^{\#D_g} \left[1+G_1 \right]^{n+1} +
	\left[1+G_2 \right]^{\#D_g} \frac{\tau}{G_1} \sum_{k=0}^n 
	G_1(1+G_1)^k \\
	 \leq & \displaystyle 
	\left[1+G_2 \right]^{\#D_g} \left[|e_0|+
	\frac{\tau}{G_1}\right] \left[1+G_1 \right]^{n+1} 
	 \\
	 \leq & \displaystyle 
	\left[1+G_2 \right]^{\#D_g} \left[|e_0|+
	\frac{\tau}{G_1}\right] \exp((n+1) G_1).
	\end{aligned}
	\end{equation}
\end{proof}

Now we will prove the main theorem of this section. In it we will see that, in the framework of the previous results, we can guarantee the convergence of the method introduced in the previous section.

\begin{thm}[Convergence of the predictor-corrector method]\label{thmcon} Under the hypotheses of Lemma~\ref{lem4}, if we assume $e_0=0$, we have, for a given $t_j\in [0,T]$, that
	\begin{equation}
	\begin{array}{lcl}
	\displaystyle \lim_{h \to 0} |u_j-x(t_j)| &=& 0, \\
	\displaystyle \lim_{h \to 0} |u_j^*-x^*(t_j)| &=& 0, \\
	\displaystyle \lim_{h \to 0} | u_j^+-x(t_j^+)| &=& 0.
	\end{array}
	\end{equation}
	Furthermore, we get the following error bounds:
	\begin{equation}
	\begin{aligned}
	\displaystyle
	|u_j^*-x^*(t_j)|  \leq & \displaystyle
	\left[1+G_5 \chi_{D_g}(t_j)\right]
	\left[1+G_2 \right]^{\#D_g}
\\ 	 & \cdot \left[|e_0|+ h
	\frac{\tau/h}{G_1}\right] \exp\left[ \frac{G_1}{h} t_j + G_4 \right],
	 \\
	\displaystyle
	|u_j^+-x(t_j^+)| \leq & \displaystyle \left[1+G_3\chi_{D_g}(t_j) \right] 
	\left[1+G_2 \right]^{\#D_g} \\ 	 & \cdot \left[|e_0|+ h
	\frac{\tau/h}{G_1}\right] \exp\left[ \frac{G_1}{h} t_j \right],
	 \\
	\displaystyle
	|u_j-x(t_j)|  \leq & \displaystyle 
	\left[1+G_2 \right]^{\#D_g} \left[|e_0|+ h
	\frac{\tau/h}{G_1}\right] \exp\left[ \frac{G_1}{h} t_j \right],
	\end{aligned}
	\end{equation}
	for every $j=1,\ldots,N+1$, where
	\begin{equation}
	\begin{array}{rcl}
	G_3&=& \displaystyle K_1 K_2, \\
	G_4&=&\displaystyle K_3 H h , \\
	G_5&=&\displaystyle G_3(1+G_4).
	\end{array}
	\end{equation}
\end{thm}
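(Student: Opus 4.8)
The plan is to recast the three claims in terms of the errors $e_j=u_j-x(t_j)$, $e_j^*=u_j^*-x^*(t_j)$ and $e_j^+=u_j^+-x(t_j^+)$ from the definition preceding Lemma~\ref{lem3}, and then to assemble them from the one-step estimates of Lemma~\ref{lem3} together with the closed recurrence bound of Lemma~\ref{lem4b}. The key bookkeeping device is that, by (H4), $t_k=kh$, so the step index and the time are tied by $j=t_j/h$. With $n+1=j$ the factor $\exp[(n+1)G_1]$ of Lemma~\ref{lem4b} becomes $\exp[\tfrac{G_1}{h}t_j]$, and writing $\tfrac{\tau}{G_1}=h\,\tfrac{\tau/h}{G_1}$ turns that lemma verbatim into the third stated bound, for arbitrary $e_0$.

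The other two bounds follow by composition. By the first item of Lemma~\ref{lem3}, $|e_j^+|\le[1+G_3\chi_{D_g}(t_j)]\,|e_j|$ with $G_3=K_1K_2$; substituting the bound just obtained for $|e_j|$ gives the second inequality. For the predictor, the second item of Lemma~\ref{lem3} gives $|e_j^*|\le[1+G_4+G_5\chi_{D_g}(t_{j-1})]\,|e_{j-1}|$, since with $G_4=K_3Hh$ and $G_5=G_3(1+G_4)$ one has the identity $1+K_3Hh+(1+K_3Hh)K_1K_2\chi=(1+G_4)(1+G_3\chi)\le\exp(G_4)\,[1+G_5\chi]$. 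Inserting the Lemma~\ref{lem4b} bound for $|e_{j-1}|$ and using $\exp[\tfrac{G_1}{h}t_{j-1}]=\exp[\tfrac{G_1}{h}t_j-G_1]\le\exp[\tfrac{G_1}{h}t_j]$ produces the first inequality (the indicator being evaluated at $t_{j-1}$, which is all that is needed below).

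For the limits I would set $e_0=0$ and track orders in $h$. By Lemma~\ref{lem2}, $\tau=O(h^2)$, whereas $G_1=\tfrac12(K_2+K_3)Hh+O(h^2)=O(h)$ and $G_4=K_3Hh=O(h)$; meanwhile $G_2,G_3,G_5$ converge to finite limits ($G_2,G_5\to K_1K_2$, $G_3=K_1K_2$) and $\#D_g$ is finite by (H4). Hence $\tfrac{G_1}{h}t_j\to\tfrac12(K_2+K_3)Ht_j$, so all exponentials stay bounded, and the prefactors $[1+G_2]^{\#D_g}$ together with the indicator factors built from $G_3$ and $G_5$ stay bounded; since $e_0=0$, the surviving bracket is $\tfrac{\tau}{G_1}=O(h^2)/O(h)=O(h)\to0$. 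Each right-hand side is thus a bounded factor times a factor tending to $0$, which yields the three limits.

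The only genuinely delicate point is that the jump amplification $[1+G_2]^{\#D_g}$ does \emph{not} vanish: because $G_2\to K_1K_2\neq0$, the discontinuities of $g$ leave a fixed multiplicative blow-up $[1+K_1K_2]^{\#D_g}$ in the limit. Convergence therefore cannot be charged to that factor and must be drawn entirely from $\tfrac{\tau}{G_1}\to0$ (consistency, $\tau=O(h^2)$, beating the $O(h)$ denominator $G_1$) together with $e_0=0$. One must likewise check that the $1/h$ in the exponent is harmless, which works precisely because $G_1$ is itself of order $h$: the naive growth $(1+G_1)^{j}\approx\exp(jG_1)$ over $j=t_j/h$ steps then stays bounded. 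This is the usual discrete-Grönwall cancellation, here adapted to the Stieltjes setting with the extra jump contributions collected in $G_2$.
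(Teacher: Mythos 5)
Your proof is correct and follows essentially the same route as the paper: the corrector bound is exactly Lemma~\ref{lem4b} with $t_{n+1}=(n+1)h$ rewritten as $\exp\left[\frac{G_1}{h}t_j\right]$ and $\frac{\tau}{G_1}=h\frac{\tau/h}{G_1}$, the right-limit and predictor bounds come from composing the one-step estimates of Lemma~\ref{lem3} (with the same factorization $1+K_3Hh+(1+K_3Hh)K_1K_2\chi=(1+G_4)(1+G_3\chi)\le\exp(G_4)(1+G_5\chi)$), and the limits follow from $\tau/G_1=O(h)$, $G_1/h=O(1)$ and bounded prefactors, just as in the paper. Your handling of the predictor indices (bounding $|e_j^*|$ via $|e_{j-1}|$ and $\chi_{D_g}(t_{j-1})$, then absorbing $\exp\left[\frac{G_1}{h}t_{j-1}\right]\le\exp\left[\frac{G_1}{h}t_j\right]$) is in fact slightly more careful than the paper's, which writes that step with index $j$ throughout.
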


\begin{proof} We analyze each case separately.
	\begin{itemize}
		\item \emph{Errors associated to the corrector}. From the previous lemma
		\begin{equation}
		\begin{array}{rcl}
		|e_{n+1}| & \leq & \displaystyle 
		\left[1+G_2 \right]^{\#D_g} \left[|e_0|+ h
		\frac{\tau/h}{G_1}\right] \exp\left[ \frac{G_1}{h} (n+1)h \right],
		\end{array}
		\end{equation}
	where
		\begin{equation}
		\begin{aligned}
		\displaystyle
		\frac{\tau/h}{G_1}= & \frac{H^2 + K_2 H^3 
			h}{ K_2 H + K_3 H + 
			K_3^2 H^2 h}\longrightarrow 
		\frac{H^2 }{K_2 H +K_3 H}>0, \\
		\displaystyle
		\frac{G_1}{h}= & 
		\frac{1}{2} K_2 H +\frac{1}{2} K_3 H + \frac{1}{2} K_3^2 H^2 h 
		\longrightarrow \frac{1}{2} K_2 H +\frac{1}{2} K_3 H>0,
		\end{aligned}
		\end{equation}
		when $h\to 0$. Hence, given $t_j \in [0,T]$, we get
		\begin{equation}
		|u_j-x(t_j)| \leq \displaystyle 
		\left[1+G_2 \right]^{\#D_g} \left[|e_0|+ h
		\frac{\tau/h}{G_1}\right] \exp\left[ \frac{G_1}{h} t_j \right],
		\end{equation}
 thence, given that $|e_0|=0$, we have the convergence of the corrector to the solution of the problem:
		\begin{equation}
		\lim_{h \to 0} u_j= x(t_j).
		\end{equation}
		\item \emph{Errors associated to the predictor}. 
 Using the bounds in Lemma~\ref{lem3} and Lemma~\ref{lem4} we have that, given $t_j \in [0,T]$,
		\begin{equation}
		\begin{array}{rcl}
		|u_j^*-x^*(t_j)| &\leq & \displaystyle
		\left[1+K_3 H h+(1+K_3 H h)K_1K_2 \chi_{D_g}(t_j)\right] |e_j| \\
		&\leq& \displaystyle 
		\left[1+G_5 \chi_{D_g}(t_j)\right]
		\exp (G_4)|e_j| 
		 \\
		&\leq& \displaystyle 
		\left[1+G_5 \chi_{D_g}(t_j)\right]
		\left[1+G_2 \right]^{\#D_g} \\ & & \cdot \left[|e_0|+ h
		\frac{\tau/h}{G_1}\right] \exp\left[ \frac{G_1}{h} t_j + G_4 \right],
		\end{array}
		\end{equation}
		from where we obtain the convergence.

		\item \emph{Errors associated to the right limit}. Using the bounds in Lemma~\ref{lem3} and Lemma~\ref{lem4} we have that, given $t_j \in [0,T]$,
		\begin{equation}
		|u_j^+-x(t_j^+)| \leq \displaystyle \left[1+G_3\chi_{D_g}(t_j) \right] 
		\left[1+G_2 \right]^{\#D_g} \left[|e_0|+ h
		\frac{\tau/h}{G_1}\right] \exp\left[ \frac{G_1}{h} t_j \right].
		\end{equation}
 We obtain the same kind of convergence as in the previous case. It is worth noting that, in the case $t_j \notin D_g$, the error associated to the right limit coincides with the error of the predictor.
	\end{itemize}
\end{proof}

\begin{rem}
 Observe that the order of convergence of the method equals the order of $\tau$ minus one, that is, of the order of $\widetilde{\tau}$. In the case we deal with functions with extra regularity we may be able to improve the order of $\tau$, which would better the order of convergence of the numerical method. Last, we would like to mention that the method we presented generalizes the classical order two Runge-Kutta. This assertion is motivated by the fact that the usual derivative is a particular instance of the Stieltjes derivative in the case $g(x)=x$. 
\end{rem}

Last, we analyze the stability of the method with the intention of evaluating its sensitivity towards the perturbations generated by the rounding errors produced while evaluating the different elements of scheme (\ref{eq:ec11}). 
We omit the proof of the following result, for it is essentially a modification of that of Theorem~\ref{thmcon}.

\begin{thm}[Stability of the numerical method] Given
	$\widehat{u}_0$, we consider the following modification of the numerical scheme (\ref{eq:ec11}):
	\begin{equation}
	\left\{\begin{array}{rcl}
	\displaystyle 
	\widehat{u}^+_k&=&
	\displaystyle
	\widehat{u}_k+f(t_k,\widehat{u}_k) 
	\Delta^+ g(t_k)+ \widehat\rho_k^+, \\
	\displaystyle 
	\widehat{u}_{k+1}^*&=& 
	\displaystyle \widehat{u}_k^+ + f(t_k,\widehat{u}^+_k) 
	(g(t_{k+1})-g(t_k^+)) + \widehat\rho_{k+1}^*, 
	\\
	\displaystyle \widehat{u}_{k+1} &=& 
	\displaystyle
	\widehat{u}_k^+ + 
	\frac{1}{2} \left(f(t_k,\widehat{u}_k^+)+f(t_{k+1},
	\widehat{u}_{k+1}^*)\right)(g(t_{k+1})-g(t_k^+)) + \widehat\rho_{k+1},
	\displaystyle 
	\end{array}\right.
	\end{equation}
	where $k=0,\ldots,N$. Defining $\widehat{e}_{k}=
	\widehat{u}_k-x(t_k)$, for $k=1,\ldots,N+1$, it holds that
	\begin{equation}
	|\widehat{e}_{k+1}| \leq \left[1+G_1+G_2 \chi_{D_g}(t_n) \right] |\widehat{e}_n|+
	|\tau_{k+1}|+|\widehat{\rho}_{k+1}|+ G_1 |\widehat{\rho}_k^+|+
	G_6 |\widehat{\rho}_{k+1}^*|,
	\end{equation}
	where
	\begin{equation}
	G_6=\frac{1}{2} K_2 H h.
	\end{equation}
	Thence, writing $\widehat{\rho}:=\max\{|\widehat{\rho}_k|: \;
	k=1,\ldots,N+1\}$, $\widehat{\rho}^*=\max\{|\widehat{\rho}_k^*|: \;
	k=1,\ldots,N+1\}$ and $\widehat{\rho}^+=\max\{|\widehat{\rho}_k^+|: \;
	k=0,\ldots,N\}$, we have that, for every $t_j \in [0,T]$, 
	\begin{equation}
	|\widehat{u}_j-x(t_j)| \leq \displaystyle 
	\left[1+G_2 \right]^{\#D_g} \left[|e_0|+
	h \frac{\frac{\tau}{h}+\frac{\widehat{\rho}}{h}+G_1 \frac{\widehat{\rho}^+}{h}+ 
		G_6 \frac{\widehat{\rho}^*}{h}}{G_1}\right] \exp\left[ \frac{G_1}{h} t_j \right].
	\end{equation}
\end{thm}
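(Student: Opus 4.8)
The plan is to mirror, almost line by line, the two-stage argument behind Theorem~\ref{thmcon}: first derive a one-step recurrence for $\widehat e_{k+1}$ in terms of $\widehat e_k$ that now also collects the three perturbations, and then iterate that recurrence exactly as in Lemma~\ref{lem4b}. To this end I would introduce the perturbed auxiliary errors $\widehat e_k^+ := \widehat u_k^+ - x_k^+$ and $\widehat e_{k+1}^* := \widehat u_{k+1}^* - x_{k+1}^*$, in complete analogy with the local errors appearing in the proof of Lemma~\ref{lem3}, the only difference being that each line of the modified scheme carries an extra additive term $\widehat\rho_k^+$, $\widehat\rho_{k+1}^*$ or $\widehat\rho_{k+1}$.

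For the one-step recurrence I would repeat the three computations in the proof of Lemma~\ref{lem3}. Subtracting the exact relations from the perturbed ones and applying the Mean Value Theorem together with hypotheses (H7)--(H8) (which furnish the bounds $K_2$ and $K_3$) yields
\begin{equation}
|\widehat e_k^+| \leq \left[1 + K_1 K_2 \chi_{D_g}(t_k)\right]|\widehat e_k| + |\widehat\rho_k^+|,
\end{equation}
\begin{equation}
|\widehat e_{k+1}^*| \leq \left[1 + K_3 H h\right]|\widehat e_k^+| + |\widehat\rho_{k+1}^*|,
\end{equation}
and, after inserting the definition of $\tau_{k+1}$ into the corrector line and applying the Mean Value Theorem once more,
\begin{equation}
|\widehat e_{k+1}| \leq \left[1 + \tfrac{1}{2} K_3 H h\right]|\widehat e_k^+| + \tfrac{1}{2} K_2 H h\,|\widehat e_{k+1}^*| + |\tau_{k+1}| + |\widehat\rho_{k+1}|.
\end{equation}
Substituting the first two estimates into the third and grouping the same powers of $h$ and factors $\chi_{D_g}(t_k)$ as in Lemma~\ref{lem3} regenerates the coefficient $1 + G_1 + G_2 \chi_{D_g}(t_k)$ in front of $|\widehat e_k|$; the perturbations are simply carried along, $\widehat\rho_k^+$ entering through $\widehat e_k^+$ with the factor that multiplies $\widehat e_k^+$, $\widehat\rho_{k+1}^*$ entering through $\widehat e_{k+1}^*$ with coefficient $G_6 = \tfrac{1}{2} K_2 H h$, and $\widehat\rho_{k+1}$ entering additively. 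This produces the announced one-step bound.

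Finally I would iterate this recurrence exactly as in Lemma~\ref{lem4b}: setting $\widehat R_{k+1} := |\tau_{k+1}| + |\widehat\rho_{k+1}| + G_1 |\widehat\rho_k^+| + G_6 |\widehat\rho_{k+1}^*|$ and bounding it uniformly by $\widehat R := \tau + \widehat\rho + G_1 \widehat\rho^+ + G_6 \widehat\rho^*$, the telescoping identity $1 + \sum_{k=0}^n G(1+G)^k = (1+G)^{n+1}$ together with $1 + G \leq \exp(G)$ gives
\begin{equation}
|\widehat e_{n+1}| \leq \left[1 + G_2\right]^{\#D_g}\left[|e_0| + \frac{\widehat R}{G_1}\right]\exp\left[(n+1) G_1\right].
\end{equation}
Writing $t_j = (n+1) h$, factoring an $h$ out of $\widehat R$ and using $\widehat R / h = \tau/h + \widehat\rho/h + G_1 \widehat\rho^+/h + G_6 \widehat\rho^*/h$ then yields the stated estimate. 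The only genuinely new bookkeeping---and hence the \emph{main obstacle}---lies in the one-step recurrence: one must track which of the three perturbations rides on which auxiliary error so that each acquires its correct coefficient; once that is settled, the iteration is a verbatim copy of the convergence argument, which is precisely why the authors were content to omit it.
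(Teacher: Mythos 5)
Your overall strategy is exactly the one the paper intends: it explicitly omits the proof as ``essentially a modification of that of Theorem~\ref{thmcon}'', and your plan --- perturbed analogues $\widehat e_k^+$, $\widehat e_{k+1}^*$ of the local errors, the three Mean Value Theorem computations copied from Lemma~\ref{lem3} with the additive terms $\widehat\rho_k^+$, $\widehat\rho_{k+1}^*$, $\widehat\rho_{k+1}$ carried along, and then the discrete Gronwall iteration of Lemma~\ref{lem4b} with $1+\sum_{k=0}^n G(1+G)^k=(1+G)^{n+1}$ and $1+G\le\exp(G)$ --- is precisely that modification. Your three one-step inequalities are correct (reading $f(t_k,\widehat u_k^+)$ in the perturbed scheme as $f(t_k^+,\widehat u_k^+)$, consistent with~\eqref{eq:ec11}; the dropped superscript is evidently a typo in the statement), and the final iteration step is a faithful copy of Lemma~\ref{lem4b}.

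The gap is in the bookkeeping you declare settled: substituting your first two estimates into the third does \emph{not} ``produce the announced one-step bound''. Collecting the coefficient of $|\widehat\rho_k^+|$ gives
\begin{equation}
\left[1+\tfrac{1}{2}K_3Hh\right]+\tfrac{1}{2}K_2Hh\left(1+K_3Hh\right)=1+G_1
\end{equation}
(up to the same $K_2K_3$ versus $K_3^2$ cross term already present in Lemma~\ref{lem3}), whereas the statement asserts the coefficient $G_1$ alone. This is not a repairable slip in your argument but a defect of the stated bound: at a node $t_k\notin D_g$ the perturbed scheme gives $\widehat u_k^+=\widehat u_k+\widehat\rho_k^+$, so taking $f$ constant in $x$, $\widehat e_k=0$ and $\tau_{k+1}=\widehat\rho_{k+1}=0$ yields $|\widehat e_{k+1}|=|\widehat\rho_k^+|$, which violates any bound with coefficient $G_1=O(h)$ on $|\widehat\rho_k^+|$. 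Hence the one-step inequality must carry $(1+G_1)|\widehat\rho_k^+|$, and after iteration the global estimate inherits $(1+G_1)\widehat\rho^+/G_1$ --- an $O(1/h)$ amplification of $\widehat\rho^+$, on a par with $\widehat\rho$ --- in place of the benign term $G_1\widehat\rho^+/h$ appearing in the theorem. Your proof as written silently adopts the theorem's coefficients in the definition of $\widehat R_{k+1}$, so it proves the (false as stated) inequality only by assertion at that one point; you should either correct the coefficient and propagate it through the final bound, flagging the statement as an erratum, or explain why $\widehat\rho_k^+$ is assumed nonzero only at the finitely many jump nodes, which is the only reading under which its contribution stays bounded independently of $h$.
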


\section{The general linear equation}\label{GLE}

In order to validate the numerical approximation of the solution of problem~\eqref{eq:ec1}, we will 
consider the following general linear equation as a test problem:
\begin{equation}\label{eqlg}\begin{aligned}
x'_g(t)+d(t)x(t)= & h(t), t\in[0,T),\\
x(0)= & x_0,
\end{aligned}
\end{equation}
where $x_0\in{\mathbb R}$, $h,d\in L_g^1([0,T))$ and
\begin{align}
& \label{eql1} d(t)\Delta^+g(t)\ne 1\enskip\forall t\in[a,b)\cap D_g,\\
&\label{eql2} \sum_{t\in[a,b)\cap D_g}\left|\ln\left|1-d(t)\Delta^+g(t)\right|\right|<\infty.
\end{align}
Under~\eqref{eql1}--\eqref{eql2}, we know there is a unique solution of~\eqref{eqlg} which can be computed explicitly --see \cite{POUSO2017}-- as the unique solution of the problem 
\begin{equation}\label{eqlg2}\begin{aligned}
x'_g(t)+\widetilde d(t)x(t^+)= & \widetilde h(t), t\in[0,T),\\
x(0)= & x_0,
\end{aligned}
\end{equation}
where
\begin{align}
\label{eqd} \widetilde d(t):= & \frac{d(t)}{1-d(t)\Delta^+g(t)},\\
\label{eqh} \widetilde h(t):= & \frac{h(t)}{1-d(t)\Delta^+g(t)},
\end{align}
are $L_g^1([0,T))$ functions thanks to \cite[Proposition~6.8]{POUSO2017}. Therefore, by \cite[Proposition~6.7]{POUSO2017}, the solution of problem~\eqref{eqlg2} is given by
\begin{equation} \label{eq:sollingen}
x(t):=\widehat e_{\widetilde d}^{-1}(t)\left(x_0+\int_{[0,t)} \widehat 
e_{\widetilde d}(s)\widetilde h(s)\operatorname{d} \mu_g(s)\right),
\end{equation}
where, given an element $c\in L_g^1([0,T))$, 
\begin{equation}
\widehat e_{c}(t):=\begin{dcases} \exp\left(\int_{[0,t)}\widehat c(s)\operatorname{d}\mu_g(s)\right), & t\in[0,s_1],\\ (-1)^k\exp\left(\int_{[0,t)}\widehat c(s)\operatorname{d}\mu_g(s)\right), & t\in[s_k,s_{k+1}],\ k=1,\dots,N,
\end{dcases}
\end{equation}
\begin{equation}
\widehat c(t):=\begin{dcases} c(t) & t\in[0,T]\setminus D_g,\\ \frac{\ln|1+c(t)\Delta^+ g(t)|}{\Delta^+g(t)}, & t\in[0,T)\cap D_g,
\end{dcases}
\end{equation}
being $\{s_1,\dots,s_N\}=\{t\in[0,T)\cap D_g\ :\ 1+c(t)\Delta^+ g(t)<0\}$ the set of points such that 
$1+c(t)\Delta^+ g(t)<0$ and $s_{N+1}=T$. This set has finite cardinality --see \cite[ Lemma~6.4]{POUSO2017}. 
In our case, $c=\widetilde d$, thus:
\begin{equation}
1+\widetilde d(t)\Delta^+ g(t)=1+\frac{d(t)\Delta^+g(t)}{1-d(t)\Delta^+g(t)}=\frac{1}{1-d(t)\Delta^+g(t)}<0
\end{equation}
if and only if $1<d(t)\Delta^+g(t)$. We will still denote by $\{s_1,\dots,s_N\}=\{t\in[0,T)\cap D_g\ 
:\ 1-d(t)\Delta^+ g(t)<0\}$ and $s_{N+1}=T$, so
\begin{equation}
\widehat {\widetilde{d}}(t)=\begin{dcases} \widetilde d(t)=d(t) & t\in[0,T]\setminus D_g,\\ \frac{\ln|1+\widetilde d(t)\Delta^+ g(t)|}{\Delta^+g(t)}=-\frac{\ln|1-d(t)\Delta^+ g(t)|}{\Delta^+g(t)}, & t\in[0,T)\cap D_g.
\end{dcases}
\end{equation}

As we can see above, the general expression of the exponential $\widehat e_{\widetilde d}(t)$ and, therefore, of the 
solution of the  general linear  equation (\ref{eq:sollingen}), has a convoluted statement. This expression can 
be simplified if we consider the particular case $d=\text{\emph{const.}}$ and $d \Delta^+g(t) < 1$, 
$\forall t\in[a,b)\cap D_g$ --the case that we will consider in the numerical experiments.
\begin{equation}
\begin{aligned}
\widehat e_{\widetilde d}(t)= & \exp\left(\int_{[0,t)}\widehat {\widetilde d}(s)\operatorname{d}\mu_g(s)\right)\\
=&\exp\left(\int_{[0,t)\setminus D_g}\widehat {\widetilde d}(s)\operatorname{d}\mu_g(s)\right)\exp\left(\int_{[0,t)\cap D_g}\widehat {\widetilde d}(s)\operatorname{d}\mu_g(s)\right)\\ = & \exp(d\mu_g([0,t)\setminus D_g))\exp\left(-\sum_{s\in[0,t)\cap D_g}\ln|1-d\Delta^+ g(s)|\right).
\end{aligned}
\end{equation}
Now, by elementary properties of measure spaces,
\begin{equation}
\mu_g([0,t)\setminus D_g) =\mu_g([0,t))-\mu_g([0,t)\cap D_g)= 
g(t)-\sum_{s \in [0,t)\cap D_g} \Delta^+ g(s),
\end{equation}
and we obtain
\begin{equation}
\begin{aligned}
\widehat e_{\widetilde d}(t)=& \exp(d g(t)) \exp\left[- \sum_{s \in [0,t)\cap Dg} \left(d \Delta^+ g(s)+
\ln|1-d\Delta^+ g(s)|\right)
\right] \\
=&\exp(d g(t)) \left[\prod_{s\in [0,t)\cap D_g} \exp(d \Delta^+ g(s)) \, 
|1-d\Delta^+ g(s)| \right]^{-1}.
\end{aligned}
\end{equation}
It is  also remarkable that, in the case of $g(t)=t$, we recover the classical exponential. Also, we have 
the following direct result for the problem with constant coefficients. 

\begin{thm} Let $g:[0,T]\to{\mathbb R}$ be increasing, left continuous and such that $g(0)=0$; $x_0 \in 
\mathbb{R}$, $h\in\mathbb{R} $ and $d\in \mathbb{R}$ such that
\begin{align}
& d \Delta^+g(t) < 1\enskip\forall t\in[a,b)\cap D_g,\\
& \sum_{t\in[a,b)\cap D_g}\left|\ln(1-d\Delta^+g(t))\right|<\infty.
\end{align}
Then the solution of the problem
\begin{equation}\label{pbldcte}\begin{aligned}
x'_g(t)+d x(t)= & h ,\; t\in[0,T),\\
x(0)= & x_0,
\end{aligned}
\end{equation}
is given by the following expression:
\begin{equation} \label{eqpis}
\begin{aligned}
x(t)=&x_0 \exp(-d g(t)) \left[\prod_{s\in [0,t)\cap D_g} \exp(d \Delta^+ g(s)) \, 
(1-d\Delta^+ g(s)) \right] \\
& + h \int_{[0,t)} \exp(d(g(s)-g(t)) \\ & \cdot \left[ \prod_{u\in (s,t)\cap D_g} \exp(d \Delta^+ g(u)) 
(1-d\Delta^+ g(u)) \right]\, \operatorname{d} \mu_g(s).
\end{aligned}
\end{equation}
Observe that this expression satisfies the semigroup property, that is, if $h=0$, then
\begin{equation} \label{eqpis2}
x(t+r)=x(t)\exp(-d\mu_g([t,t+r)\setminus D_g))\prod_{s\in[t,t+r)\cap D_g}(1-d\Delta^+ g(s)).
\end{equation}
\end{thm}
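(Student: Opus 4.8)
The plan is to specialise the general representation \eqref{eq:sollingen} to the constant-coefficient situation and then read off both the closed form \eqref{eqpis} and the semigroup identity \eqref{eqpis2}. Since $d$ and $h$ are constants, the right-hand side $f(t,x)=h-dx$ is $g$-measurable and globally Lipschitz in $x$, so hypotheses (H1)--(H3) hold and \cite[Theorem 7.3]{POUSO2017} guarantees that \eqref{pbldcte} has a unique solution in $\mathcal{BC}_g([0,T])$. Hence it suffices to exhibit one function solving \eqref{pbldcte}: I would obtain a candidate from \eqref{eq:sollingen} with $c=\widetilde d$ and, as an independent check, verify that it meets the initial condition, the $g$-derivative relation off $D_g$, and the jump relation on $D_g$.

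The first step is to simplify $\widehat e_{\widetilde d}$. The hypothesis $d\,\Delta^+g(t)<1$ gives $1-d\,\Delta^+g(t)>0$, so $1+\widetilde d(t)\Delta^+g(t)=\frac{1}{1-d\Delta^+g(t)}>0$ and the set $\{s_1,\dots,s_N\}$ of sign changes is empty; the factors $(-1)^k$ therefore disappear and every absolute value may be dropped. This is exactly the computation carried out immediately above the theorem, giving
\begin{equation}
\widehat e_{\widetilde d}(t)=\exp(d\,g(t))\left[\prod_{s\in[0,t)\cap D_g}\exp(d\,\Delta^+g(s))\,(1-d\,\Delta^+g(s))\right]^{-1},
\end{equation}
so that $\widehat e_{\widetilde d}^{-1}(t)$ is the bracketed product multiplied by $\exp(-d\,g(t))$. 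Substituting $\widehat e_{\widetilde d}^{-1}(t)$ into the first summand of \eqref{eq:sollingen} reproduces the $x_0$-term of \eqref{eqpis} at once.

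For the integral term I would pull $\widehat e_{\widetilde d}^{-1}(t)$ inside and form the quotient $\widehat e_{\widetilde d}^{-1}(t)\,\widehat e_{\widetilde d}(s)$. Because the products in $\widehat e_{\widetilde d}(t)$ and $\widehat e_{\widetilde d}(s)$ run over $[0,t)\cap D_g$ and $[0,s)\cap D_g$, they telescope to a single product over $[s,t)\cap D_g$, producing the kernel $\exp(d(g(s)-g(t)))\prod_{u}\exp(d\,\Delta^+g(u))(1-d\,\Delta^+g(u))$. One then splits $\int_{[0,t)}\,\dif\mu_g$ into its continuous part, on which $\widetilde h(s)=h$ and the $[s,t)$ and $(s,t)$ ranges agree, and its atomic part carried by $D_g$, on which $\mu_g(\{s\})=\Delta^+g(s)$ and $\widetilde h(s)=h/(1-d\,\Delta^+g(s))$. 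The main obstacle is precisely this atomic bookkeeping: at a jump $s\in D_g$ the factor $1-d\,\Delta^+g(s)$ coming from the telescoped product must be reconciled with the denominator of $\widetilde h(s)$, and one must keep strict track of whether the running endpoint $s$ is included in the product range. The cleanest way to fix the exact coefficient multiplying the atomic contributions is to bypass the substitution and check the jump relation directly: for $t_0\in D_g$ one evaluates $x(t_0^+)$ from \eqref{eqpis} using $g(t_0^+)=g(t_0)+\Delta^+g(t_0)$ together with the telescoping of the product, and compares against the Stieltjes jump condition for \eqref{pbldcte}, namely $x(t_0^+)=(1-d\,\Delta^+g(t_0))\,x(t_0)+h\,\Delta^+g(t_0)$; matching the two determines the precise factor that the atomic terms of the integral must carry. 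The relation $x_g'(t)=h-d\,x(t)$ at points $t\notin D_g$ then follows by differentiating the $\exp(d\,g)$ and integral factors where $g$ is continuous, and the value at $t=0$ gives $x(0)=x_0$.

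The semigroup identity \eqref{eqpis2} is the easiest part. Setting $h=0$ and forming the ratio $x(t+r)/x(t)$, the two products telescope to $\prod_{s\in[t,t+r)\cap D_g}\exp(d\,\Delta^+g(s))(1-d\,\Delta^+g(s))$, while $g(t+r)-g(t)=\mu_g([t,t+r))$ splits as $\mu_g([t,t+r)\setminus D_g)+\sum_{s\in[t,t+r)\cap D_g}\Delta^+g(s)$. The factors $\exp(-d\,\Delta^+g(s))$ produced by the jumps inside $\exp(-d(g(t+r)-g(t)))$ cancel the matching $\exp(d\,\Delta^+g(s))$ in the telescoped product, leaving exactly $\exp(-d\,\mu_g([t,t+r)\setminus D_g))\prod_{s\in[t,t+r)\cap D_g}(1-d\,\Delta^+g(s))$, as claimed.
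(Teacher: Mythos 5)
Your route coincides with the paper's: the paper offers no separate proof of this theorem at all --- it is presented as a ``direct result'' of the simplification of $\widehat e_{\widetilde d}$ carried out immediately before the statement, which is exactly your specialisation of \eqref{eq:sollingen}. Your reduction of $\widehat e_{\widetilde d}$ (no sign changes since $1-d\Delta^+g>0$), the telescoping of the products, the $x_0$-term, and the verification of the semigroup identity \eqref{eqpis2} are all correct and complete.

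However, the one step you explicitly deferred --- ``matching the two determines the precise factor that the atomic terms of the integral must carry'' --- is not a detail that can be left open, because carrying it out shows that formula \eqref{eqpis} \emph{as printed} does not survive your own check. The telescoped quotient $\widehat e_{\widetilde d}^{-1}(t)\,\widehat e_{\widetilde d}(s)$ is a product over $u\in[s,t)\cap D_g$, so at an atom $s\in D_g$ the factor $(1-d\Delta^+g(s))$ cancels against the denominator of $\widetilde h(s)=h/(1-d\Delta^+g(s))$, but the factor $\exp(d\Delta^+g(s))$ \emph{survives}. The substitution into \eqref{eq:sollingen} therefore yields the kernel
\begin{equation}
\exp\bigl(d(g(s^+)-g(t))\bigr)\prod_{u\in(s,t)\cap D_g}\exp(d\Delta^+g(u))\,(1-d\Delta^+g(u)),
\end{equation}
which agrees with the printed kernel $\mu_g$-a.e.\ off $D_g$ (where $g(s^+)=g(s)$) but differs by $\exp(d\Delta^+g(s))$ at each atom. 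Your jump-relation test detects this: the equation forces $x(t_0^+)=(1-d\Delta^+g(t_0))\,x(t_0)+h\,\Delta^+g(t_0)$ at $t_0\in D_g$, whereas \eqref{eqpis} as printed returns $h\,\Delta^+g(t_0)\,e^{-d\Delta^+g(t_0)}$ for the inhomogeneous contribution. A one-atom example makes it concrete: for $g=\chi_{(t_1,\infty)}$ and $t>t_1$, \eqref{eqpis} gives $(1-d)x_0+h\,e^{-d}$, while the actual solution is $(1-d)x_0+h$. So you should finish the deferred computation and state the corrected kernel with $g(s^+)$ in place of $g(s)$; note that the discrepancy vanishes when $h=0$, which is why the semigroup property \eqref{eqpis2} and the homogeneous corollary that follows are unaffected. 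Two further small points: your appeal to \cite[Theorem 7.3]{POUSO2017} for uniqueness tacitly uses continuity of $g$ at $0$ (assumed elsewhere in the paper but not restated in this theorem), and uniqueness here can instead be quoted directly from the linear theory under \eqref{eql1}--\eqref{eql2}.
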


If we assume that the set of discontinuities of function $g$ is finite and we 
consider a time discretization $\{t_k\}_{k=0}^{N+1}\subset [0,T]$ satisfying hypothesis  (H4)
with $d\Delta^+g(t)< 1$, $\forall t\in[a,b)\cap D_g$, then the condition 
$\sum_{t\in[a,b)\cap D_g}\left|\ln\left(1-d\Delta^+g(t)\right)\right|<\infty$ is trivially satisfied. So, 
we have the following corollary for the homogenous case.

\begin{cor}
	Let $g:[0,T]\to{\mathbb R}$ be increasing and left continuous, such that $g(0)=0$ with a set of discontinuity 
	points that we can assume equal to the discretization points, that is, $D_g= \{t_1,\dots,t_N\}\subset(0,T)$, 
	where $t_k<t_{k+1}$ for $k=1,\dots, N-1$. Then, 
	the solution of the problem
	\begin{equation}\label{eqlgb}
	\begin{aligned}
	x'_g(t)+dx(t)= & 0,\; t\in[0,T),\\
	x(0)= & x_0,
	\end{aligned}
	\end{equation}
	where $x_0,d\in{\mathbb R}$ and $d\Delta^+g(t)< 1$, $\forall t\in[a,b)\cap D_g$, is given by
	\begin{equation}\label{sollin}
	x(t)=x_0\exp(-dg(t))\begin{dcases}1, & t\in[0,t_1],\\
	\prod_{k=1}^n(1-d\Delta^+g(t_k))\exp(d\Delta^+g(t_k)), & t\in (t_n,t_{n+1}],\\ & 
	n=1,\dots,N-1,\\
	\prod_{k=1}^N(1-d\Delta^+g(t_k))\exp(d\Delta^+g(t_k)), & t\in (t_N,T].
	\end{dcases}
	\end{equation}
\end{cor}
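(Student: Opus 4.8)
The plan is to derive this corollary as a direct specialization of the preceding theorem to the homogeneous case $h=0$, under the extra assumption that $D_g$ is finite and coincides with the discretization points. First I would check that the theorem's hypotheses hold: since $d\Delta^+ g(t) < 1$ for every $t \in D_g$ and $D_g = \{t_1,\dots,t_N\}$ is finite, the series $\sum_{t\in[a,b)\cap D_g}|\ln(1-d\Delta^+ g(t))|$ is a finite sum of finite terms and hence trivially convergent, so condition~\eqref{eql2} (in its constant-coefficient form) is automatic. The theorem therefore applies, and setting $h=0$ in~\eqref{eqpis} collapses the solution to the single term
\begin{equation*}
x(t) = x_0 \exp(-dg(t)) \prod_{s \in [0,t) \cap D_g} \exp(d\Delta^+ g(s))\,(1 - d\Delta^+ g(s)).
\end{equation*}

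The remaining work is purely combinatorial: for each $t$ I would identify exactly which discontinuity points $t_k$ lie in the half-open interval $[0,t)$, since only these contribute to the product. If $t \in [0,t_1]$, no $t_k$ satisfies $t_k < t$ (the smallest discontinuity $t_1$ is excluded because $t \le t_1$ forces $t_1 \notin [0,t)$), so the product is empty and equals $1$. If $t \in (t_n,t_{n+1}]$ for some $n \in \{1,\dots,N-1\}$, then precisely $t_1,\dots,t_n$ lie in $[0,t)$: the inequality $t > t_n$ places $t_n$ inside, while $t \le t_{n+1}$ keeps $t_{n+1}$ out, giving a product over $k=1,\dots,n$. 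Finally, if $t \in (t_N,T]$, all $N$ discontinuities lie in $[0,t)$ and the product runs over $k=1,\dots,N$. Substituting these three cases into the reduced formula reproduces the piecewise expression~\eqref{sollin} verbatim.

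The only delicate point — and the place where a hasty argument could slip — is the endpoint bookkeeping tied to the half-open interval $[0,t)$ and the left-continuity of $g$. Because the product is indexed by $s \in [0,t)$, a jump located exactly at $t$ contributes nothing; this is precisely why the case boundaries fall as $(t_n,t_{n+1}]$ rather than $[t_n,t_{n+1})$, and it is consistent with $g$ being left-continuous so that $g(t_k)$ carries none of the jump $\Delta^+ g(t_k)$. I expect this interval-endpoint accounting to be the main (though modest) obstacle; once it is handled correctly, the corollary follows immediately from the theorem with no further computation.
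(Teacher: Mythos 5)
Your proof is correct, but it takes a different route from the paper's. You specialize the explicit formula~\eqref{eqpis} to $h=0$ in one stroke and then do the combinatorial bookkeeping of which jumps $t_k$ fall in $[0,t)$ for each of the three cases; the paper instead argues by induction on the index $n$ of the interval $(t_n,t_{n+1}]$, using~\eqref{eqpis} only for the base case $t\in[0,t_1]$ and then invoking the semigroup property~\eqref{eqpis2} to pass from $x(t_n)$ to $x(t)$, computing $\mu_g([t_n,t)\setminus D_g)=g(t)-g(t_n)-\Delta^+g(t_n)$ at each step. Your direct approach is more economical — it needs neither induction nor the semigroup identity, only the observation (which you make explicitly and correctly) that for $t\in(t_n,t_{n+1}]$ exactly $t_1,\dots,t_n$ lie in $[0,t)$, with the half-open interval and left-continuity of $g$ governing the endpoint cases. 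What the paper's route buys is that the jump bookkeeping is localized to a single discontinuity per inductive step, and it exercises the semigroup property~\eqref{eqpis2} that the theorem had just recorded; what yours buys is brevity and the fact that the piecewise structure of~\eqref{sollin} is exposed as nothing more than an indexing of the single global product. You also correctly verify, as the paper does in the paragraph preceding the corollary, that finiteness of $D_g$ makes the summability condition on $\left|\ln\left(1-d\Delta^+g(t)\right)\right|$ automatic, so the theorem genuinely applies. No gaps.
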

\begin{proof} Observe that the first case of equation~\eqref{sollin} is just the second case for $n=0$, so we proceed by induction to prove the first and second cases. For $n=0$, taking $x$ as in~\eqref{eqpis}, for $t\in[0,t_1]$ we have that
	\begin{equation}
	x(t)=x_0\exp(-d g(t)).
	\end{equation}
	Assume the result is true for $t\in[0,t_{n}]$ with $n\in\{1,\dots,N-1\}$. For $t\in(t_{n},t_{n+1}]$ 
	using the semigroup property~\eqref{eqpis2},
	\begin{equation}\begin{aligned}
	x(t)= & x(t_n+t-t_n)=x(t_n)\exp(-d\mu_g([t_n,t)\setminus D_g))
	\prod_{s\in[t_n,t)\cap D_g}\left(1-d\Delta^+ g(s)\right) \\
	= & x(t_n)\exp(-d(g(t)-g(t_n))) \exp(d \Delta^+ g(t_n)) \left(1-d\Delta^+ g(t_n)\right)
	\\= &x_0\exp(-dg(t))\prod_{k=1}^n\left(1-d\Delta^+g(t_k)\right)\exp(d\Delta^+g(t_k)).
	\end{aligned}
	\end{equation}
	The third case of~\eqref{sollin} is straightforward from the previous one.
\end{proof}

Finally, it is remarkable that in previous corollary we can change the hypothesis $d\Delta^+g(t)< 1$, $\forall t\in[a,b)\cap D_g$ 
by $d\Delta^+g(t) \neq 1$, $\forall t\in[a,b)\cap D_g$, and obtain a similar expression for the solution taking 
into account the general formula (\ref{eq:sollingen}). The last hypothesis is more general that the previous one but, 
in order to present the results in a clear way, we will assume that the first hypothesis is fulfilled. 

\section{Numerical simulations}\label{NS}

In this section we will present some numerical results that we have reached using 
the scheme (\ref{eq:ec11}) for approximating the solution of the homogeneous 
linear equation (\ref{eqlgb}) with constant coefficients. We will also compare 
the numerical solution with the explicit solution (\ref{eqlgb}) that we have obtained 
in the previous section. Finally, to test the robustness of the method, we will use the 
numerical scheme to approximate the solution of a silkworm population model based 
on the example presented in \cite{POUSO2018}.

\subsection{Approximation of the general linear equation}

In order to validate the scheme (\ref{eq:ec11}) for different number of 
discontinuities in the derivator $g$ (the main difficulty of the problem), we will 
consider an increasing regular continuous part $g^C$ and we will obtain several 
$g$ test functions summing to the previous one the jump part $g^B$ 
associated to several choices of jumps. We consider the following function:
\begin{equation}
\varphi:x \in \mathbb{R} \rightarrow \varphi(x)=
\left\{\begin{array}{ll}
0, & x\leq 0, \\
\displaystyle \left[1+\exp\left(
-2\alpha \tan\left(\frac{\pi}{2}(2x-1) \right)
\right)
\right]^{-1}, & x \in (0,1), \\
1, & x \geq 1,
\end{array}\right.
\end{equation}
where $\alpha>0$. We have that $\varphi$ is a increasing $\mathcal{C}^{\infty}(\mathbb{R})$ function 
and we can use it to construct a more sophisticated increasing function $g^C$ that will be constant in 
some intervals. For instance, for $\alpha=4$, we can consider the following function $g^C$ 
in the time interval $[0,10]$ and from it build the derivator $g$ by adding the jump function $g^B$:
\begin{figure}[H]
\begin{subfigure}{.5\textwidth}
\centering
\includegraphics[width=1\linewidth]{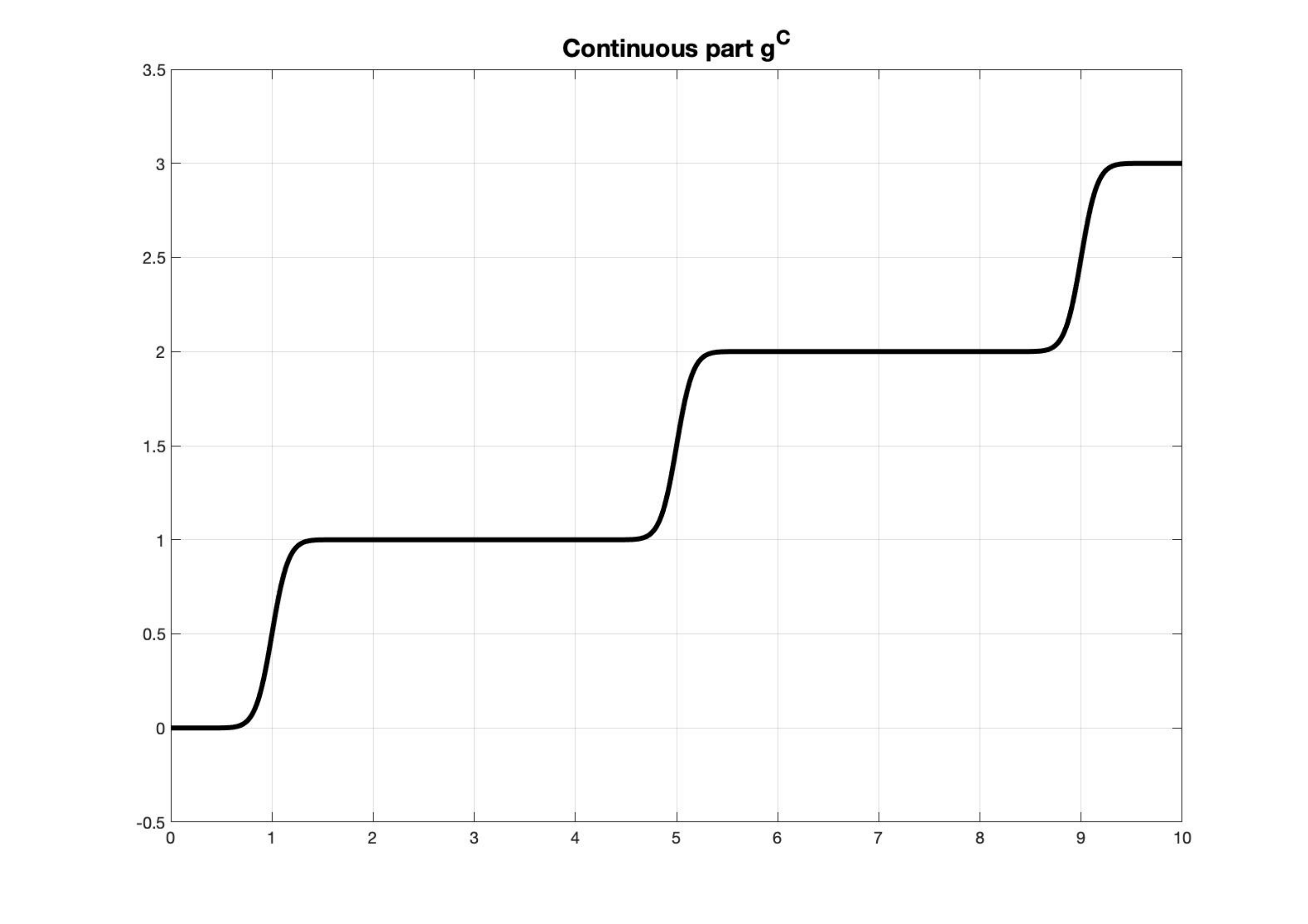}
\caption{Continuous part $g^C$.}
\label{fig:figure1}
\end{subfigure}
\begin{subfigure}{.5\textwidth}
\centering
\includegraphics[width=1\linewidth]{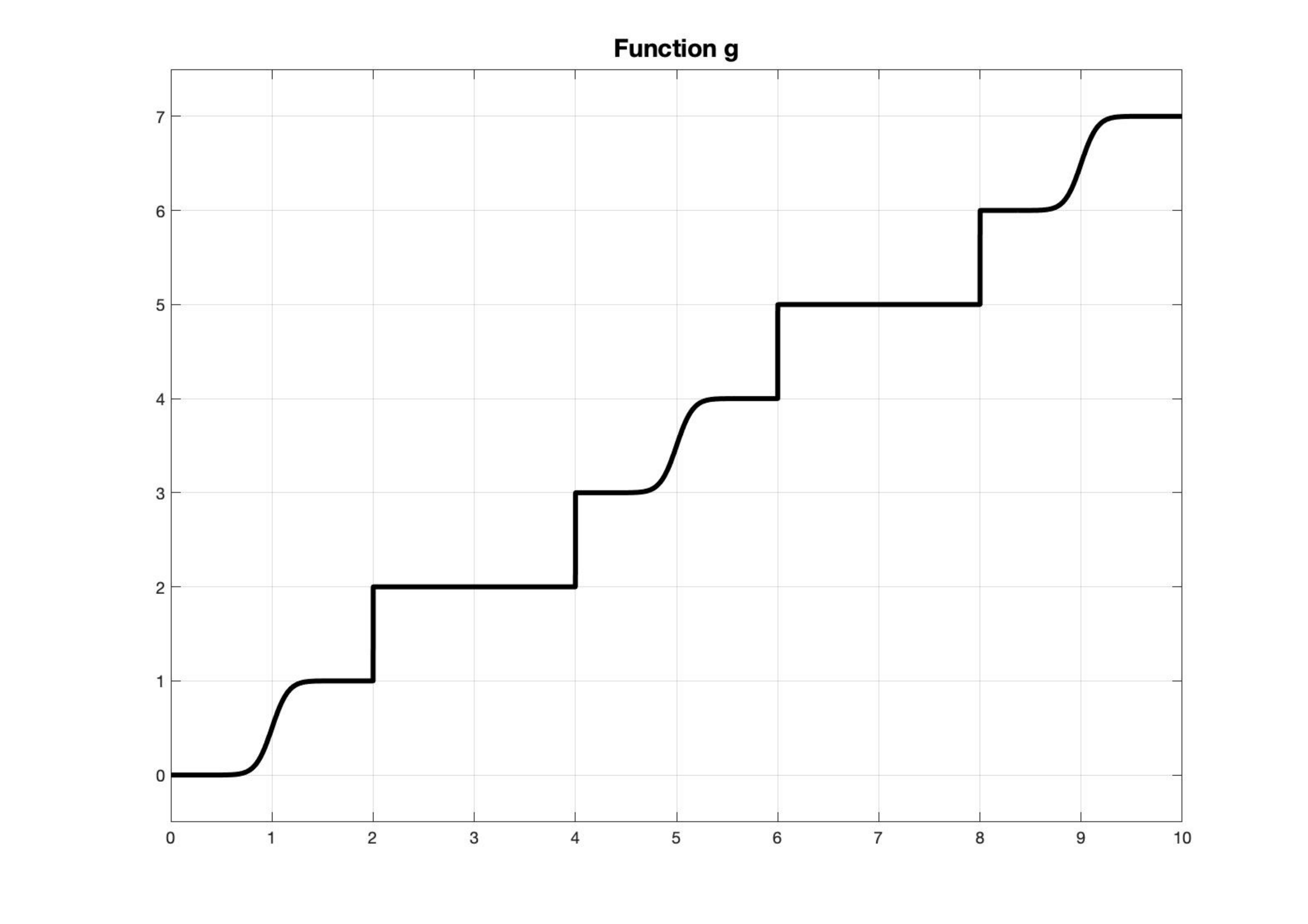}
\caption{Derivator $g$.}
\label{fig:figure2}
\end{subfigure}
\caption{Example of continuous part $g^C$ and derivator $g$.}
\end{figure}
In Figure~\ref{fig:figure1} we observe that we have concatenated three times the function $\varphi$ and, in order 
to obtain the derivator function $g$, we have added four jumps at the times $\tilde{t}_k=2k$, $k=1,2,3$, 
with $\Delta^+ g(\tilde{t}_k)=1$. In Figure~\ref{fig:figure5} we plot the solution for $d=-0.5$ and 
in Figure~\ref{fig:figure6} the solution for $d=0.5$. As we can see in both figures, we have inactivity 
periods where the function $g$ is constant and impulses in the times where the function $g$ 
presents discontinuities.
\begin{figure}[H]
\begin{subfigure}{.5\textwidth}
\centering
\includegraphics[width=1\linewidth]{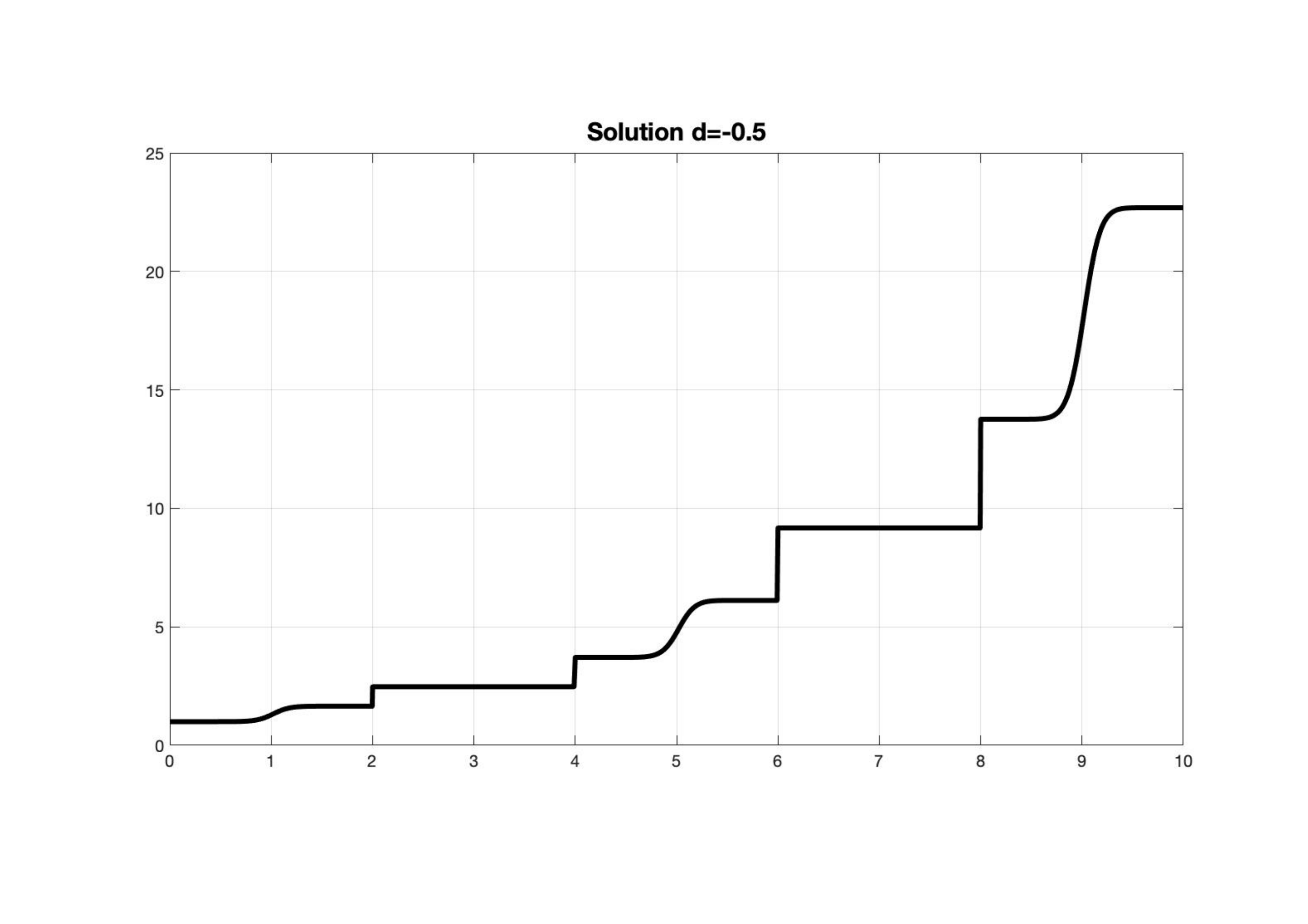}
\caption{Solution for $d=-0.5$.}
\label{fig:figure5}
\end{subfigure}
\begin{subfigure}{.5\textwidth}
\centering
\includegraphics[width=1\linewidth]{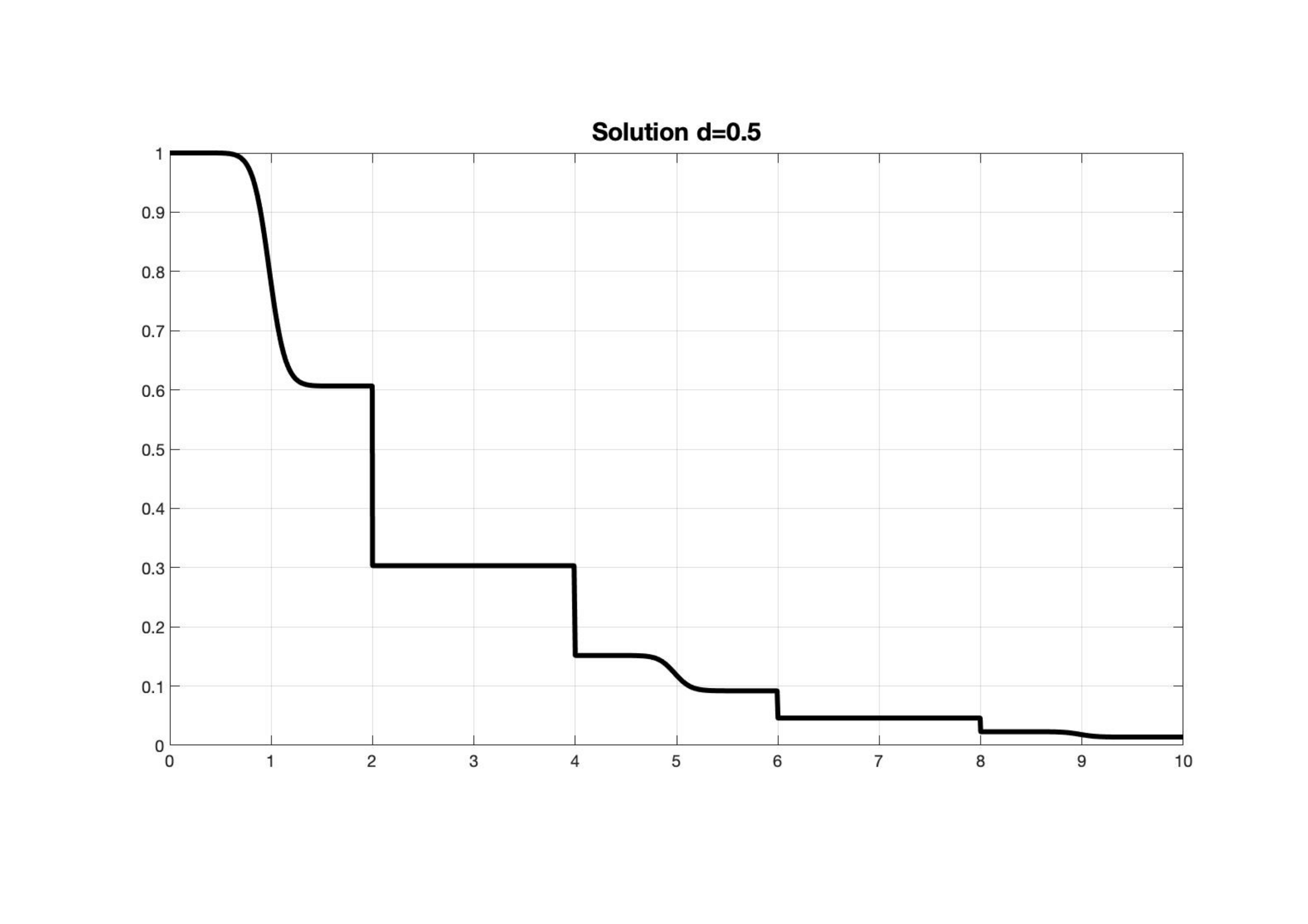}
\caption{Solution for $d=0.5$.}
\label{fig:figure6}
\end{subfigure}
\caption{Explicit solution for different values of $d$ ($x_0=1$).}
\end{figure}

We summarize the results 
obtained for different values of time step $h$ taking $x_0=1$, $d=-0.5$, 
and for different values of $\#D_g$, with $\Delta^+ g(s)=1$, 
$\forall s \in D_g$:
\begin{table}[H]
\centering
\noindent\resizebox{\textwidth}{!}{\begin{tabular}{|r|c|c|c|c|c|}
\hline
$\#D_g=2$ & $h=1.e-01$ & $h=1.e-02$ & $h=1.e-03$ & $h=1.e-04$ & $h=1.e-05$ \\
\hline 
$\max\{|e_n^*|\}$ 
& $1.1704e-01$
& $1.2136e-03$
& $1.2100e-05$
& $1.2095e-07$
& $1.2108e-09$\\
\hline
$\max\{|e_n|\}$ 
& $3.1399e-02$
& $3.3911e-04$ 
& $3.4002e-06$
& $3.4010e-08$
& $3.4173e-10$\\
\hline
$\max\{|e_n^+|\}$ 
& $1.2573e-02$
& $1.3697e-04$
& $1.3747e-06$
& $1.3752e-08$
& $1.3830e-10$\\
\hline
\hline
$\#D_g=4$ & $h=1.e-01$ & $h=1.e-02$ & $h=1.e-03$ & $h=1.e-04$ & $h=1.e-05$ \\
\hline 
$\max\{|e_n^*|\}$ 
& $2.6454e-01$
& $2.7321e-03$
& $2.7226e-05$
& $2.7213e-07$
& $2.7202e-09$\\
\hline
$\max\{|e_n|\}$ 
& $7.2094e-02$
& $7.6469e-04$
& $7.6522e-06$
& $7.6523e-08$
& $7.6426e-10$\\
\hline
$\max\{|e_n^+|\}$ 
& $2.9167e-02$
& $3.0921e-04$
& $3.0942e-06$
& $3.0942e-08$
& $3.0848e-10$\\
\hline
\hline
$\#D_g=6$ & $h=1.e-01$ & $h=1.e-02$ & $h=1.e-03$ & $h=1.e-04$ & $h=1.e-05$ \\
\hline 
$\max\{|e_n^*|\}$ 
& $6.1870e-01$
& $6.3567e-03$
& $6.3285e-05$
& $6.3250e-07$
& $6.3250e-09$ \\
\hline
$\max\{|e_n|\}$ 
& $1.9034e-01$
& $1.9793e-03$
& $1.9749e-05$
& $1.9744e-07$
& $1.9764e-09$\\
\hline
$\max\{|e_n^+|\}$ 
& $8.2704e-02$
& $8.5267e-04$
& $8.4975e-06$
& $8.4941e-08$
& $8.4977e-10$\\
\hline
\hline
$\#D_g=8$ & $h=1.e-01$ & $h=1.e-02$ & $h=1.e-03$ & $h=1.e-04$ & $h=1.e-05$ \\
\hline 
$\max\{|e_n^*|\}$ 
& $1.3225e+00$
& $1.3563e-02$
& $1.3510e-04$
& $1.3503e-06$
& $1.3500e-08$\\
\hline
$\max\{|e_n|\}$ 
& $3.4486e-01$
& $3.5400e-03$
& $3.5324e-05$
& $3.5315e-07$
& $3.5304e-09$\\
\hline
$\max\{|e_n^+|\}$ 
& $1.3545e-01$
& $1.3644e-03$
& $1.3593e-05$
& $1.3587e-07$
& $1.3521e-09$\\
\hline
\hline
$\#D_g=10$ & $h=1.e-01$ & $h=1.e-02$ & $h=1.e-03$ & $h=1.e-04$ & $h=1.e-05$ \\
\hline 
$\max\{|e_n^*|\}$ 
& $2.9049e+00$
& $2.9828e-02$
& $2.9723e-04$
& $2.9708e-06$
& $2.9703e-08$\\
\hline
$\max\{|e_n|\}$ 
& $6.9124e-01$
& $7.1152e-03$
& $7.1039e-05$
& $7.1025e-07$
& $7.1110e-09$\\
\hline
$\max\{|e_n^+|\}$ 
& $2.5333e-01$
& $2.5546e-03$
& $2.5465e-05$
& $2.5456e-07$
& $2.5464e-09$\\
\hline
\end{tabular}}
\caption{Numerical results (linear equation)}
\label{tab:table1}
\end{table} 

From the table above we can observe that numerical errors grow as the number of discontinuities in 
the derivator increases. This behavior is consistent with the error bounds obtained in Theorem~\ref{thmcon}
in which the term $[1+G_2]^{\#D_g}$ appears multiplying the error expressions. In Figure~\ref{fig:figure3} 
we can observe the error evolution for the predictor and, in Figure~\ref{fig:figure4}, the error 
evolution for the corrector. We realize that the global behavior in terms of $h$ for the predictor is 
$O(h)$ and $O(h^2)$ for the corrector. This improvement in the order of convergence 
with respect to the one predicted in theory is a consequence of the fact that, 
thanks to the regularity of the solution, the trapezoidal formula is more accurate.
\begin{figure}[H]
\begin{subfigure}{.5\textwidth}
\centering
\includegraphics[width=1\linewidth]{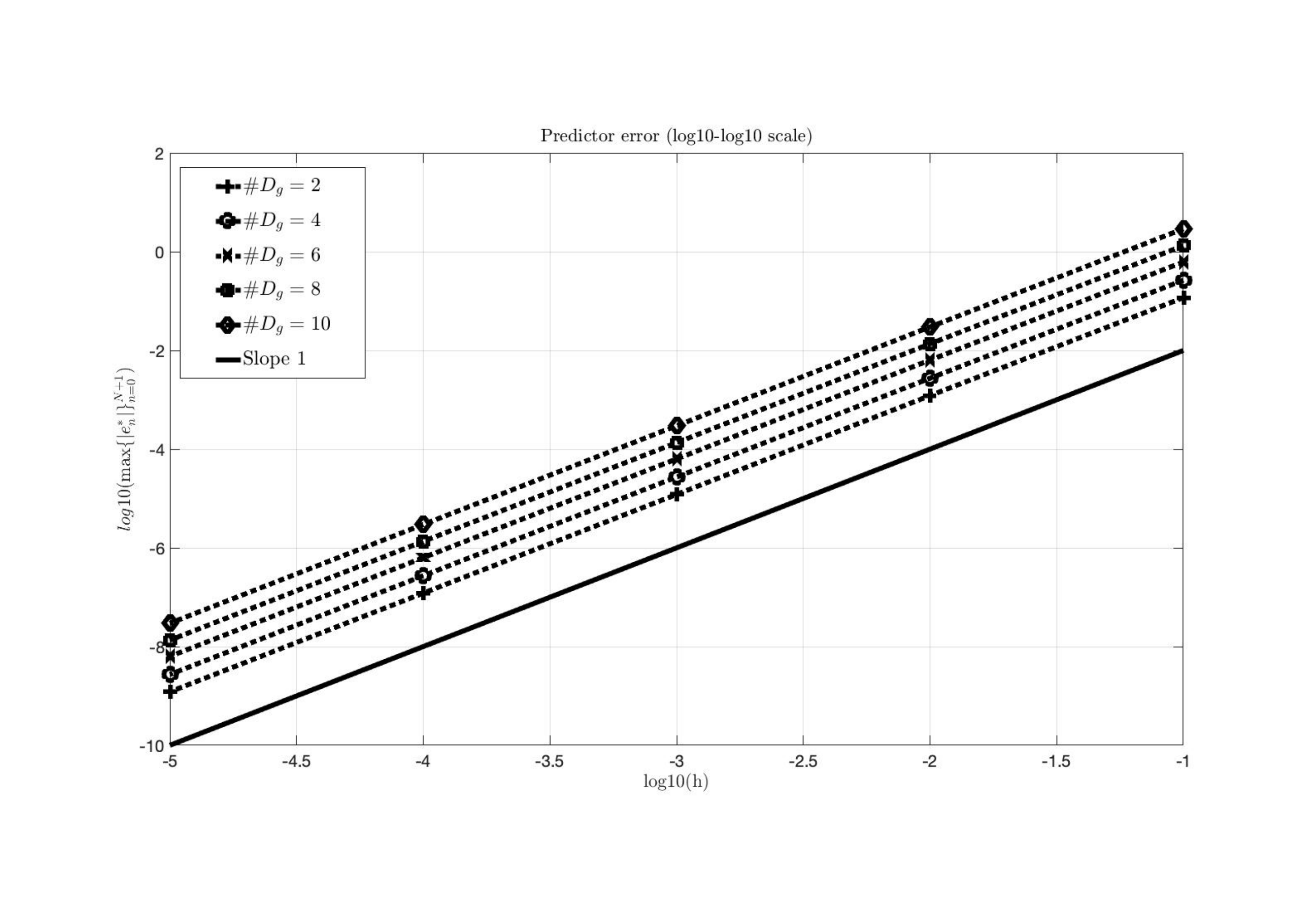}
\caption{Predictor error in $log10-log10$ scale}
\label{fig:figure3}
\end{subfigure}
\begin{subfigure}{.5\textwidth}
\centering
\includegraphics[width=1\linewidth]{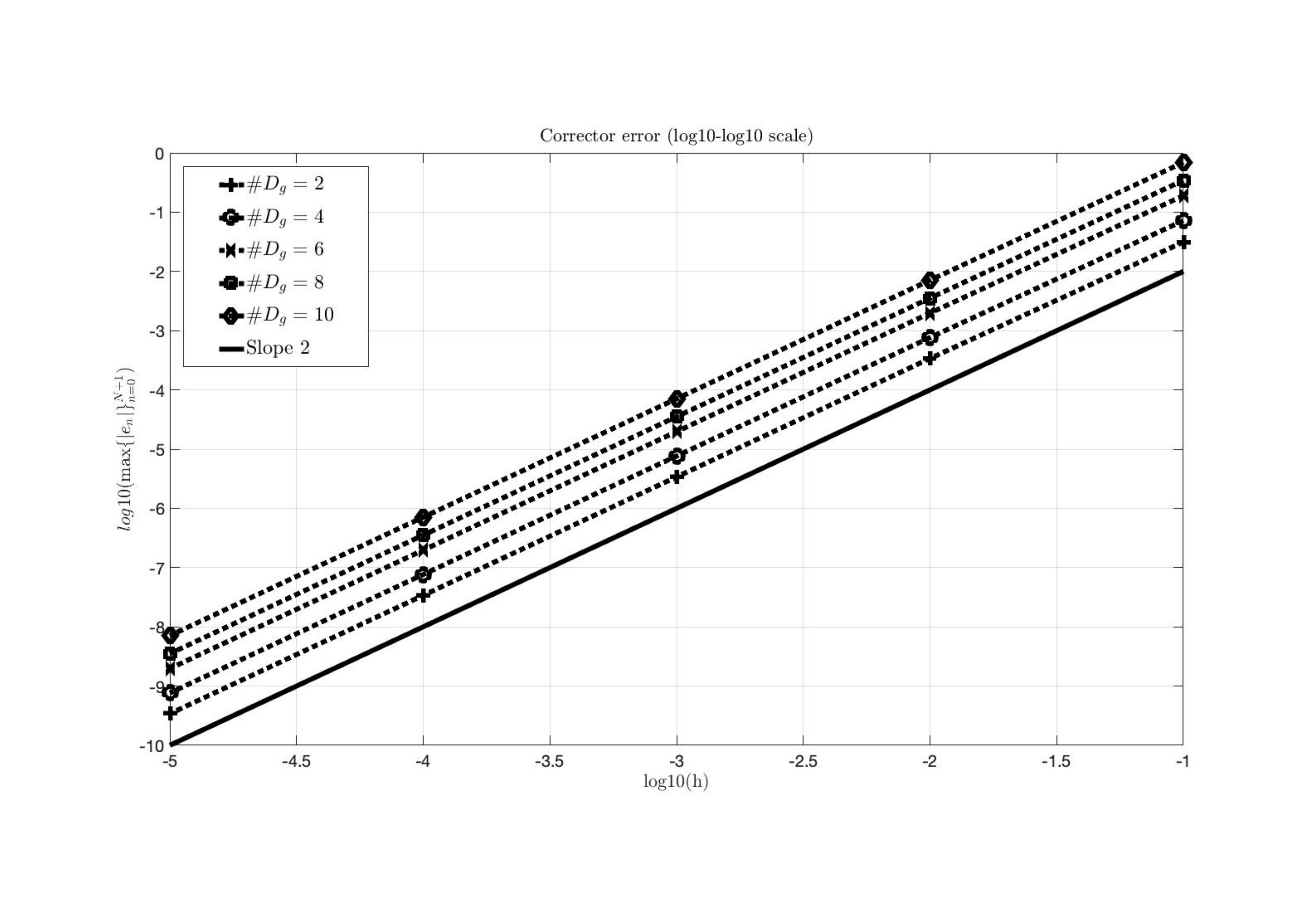}
\caption{Corrector error in $log10-log10$ scale}
\label{fig:figure4}
\end{subfigure}
\caption{Evolution of the global error}
\end{figure}

\subsection{Approximation of a silkworm population model}

We present in this section the numerical approximation of a realistic 
case which corresponds to a silkworm population model based 
on the example presented in \cite{POUSO2018}, that we will briefly summarize 
for the convenience of the reader. In this example the authors 
consider that the life cycle of silkworms has three stages: worm, cocoon and moth. 
Moths lay eggs and die soon after, then eggs hatch and produce a completely new 
colony of silkworms.
\begin{equation}
\begin{array}{|l|l|}\hline \text { Stage } & {\text { Time Intervals }} \\ \hline \text { Worms } & {(5 k, 5 k+2], k=0,1,2, \ldots} \\ \hline \text { Cocoons } & {(5 k+2,5 k+3], k=0,1,2, \ldots} \\ \hline \text { Moths } & {(5 k+3,5 k+4], k=0,1,2, \ldots} \\ \hline \text { Eggs } & {(5 k+4,5 k+5], k=0,1,2, \ldots} \\ \hline\end{array}
\end{equation}
In order to take into account the previous behavior, they consider the 
following derivator $g:[0,\infty) \rightarrow \mathbb{R}$:
\begin{equation}
g(t)=\begin{dcases}{\frac{1}{2} \sqrt{4 t-t^{2}},} & {\text { if } 0 \leqslant t \leqslant 2}, \\ {1,} & {\text { if } 2<t \leqslant 3}, \\ {2-\sqrt{6 t-t^{2}-8},} & {\text { if } 3<t \leqslant 4}, \\ {3,} & {\text { if } 4<t \leqslant 5}, \\
4+g(t-5), & {\text { if } 5 > t,}
\end{dcases}
\end{equation}
and they solve the following Stieltjes equation:
\begin{equation} \label{eq:stielt1}
\left\{\begin{array}{l}
\displaystyle 
x_g'(t)=f(t,x(t),x),\; t\in (0,\infty)\setminus D_g,\\
x(0)=x_0.
\end{array}\right.
\end{equation}
Where $f:[0,T] \setminus C_g \times \mathbb{R} \times L^1_{\operatorname{loc}}(\mathbb{R}) \rightarrow 
\mathbb{R}$ is such that
\begin{equation} \label{eq:stielt2}
f(t, x, \varphi)=\begin{dcases}-c x, & {\text { if } t \in(5 k, 5 k+4), k=0,1,2, \ldots} \\ 
{-x,} & {\text { if } t=5 k+4, k=0,1,2, \ldots} \\ {\lambda \int_{t-5}^{t-1} \varphi(s) \operatorname{d} s,} & {\text { if } t=5(k+1), k=0,1,2, \ldots}\end{dcases}
\end{equation}
with $c>0$, $\lambda>0$. In  \cite[Proposition 5.1]{POUSO2018} the authors obtain the 
explicit solution of the previous model:
\begin{equation}
x(t)=\left\{\begin{array}{ll}
\displaystyle x_0 \exp(-c g(t)),& 0\leq t\leq 4, \\
\displaystyle \lambda \exp\left(-c(g(t)-g(5k^+))\right)
\int_{5(k-1)}^{5k-1} x(s)\, ds
, & 5k<t\leq 5k+4,\; k\in \mathbb{N}, \\
0, & \text{otherwise}.
\end{array}\right.
\end{equation}

Now, we approximate the solution of the Stieltjes differential equation~\eqref{eq:stielt1} using 
the scheme (\ref{eq:ec11}) in the time interval $[0,10]$, for $\lambda=1.1$, $c=1.2$ and $x_0=8$. 
We realize that in order to evaluate the function \eqref{eq:stielt2} we have to approximate the 
integral value using a classical quadrature formulae, so the convergence order of the full scheme 
will be penalized by this approximation. In our case we have considered a composite 
trapezoidal rule. In the following table we summarize the numerical results that we have 
obtained in this case (we omit the errors for the predictor and the limits from the right):
\begin{table}[H]
\centering
\noindent\resizebox{\textwidth}{!}{\begin{tabular}{|r|c|c|c|c|c|}
\hline
& $h=1.e-01$ & $h=1.e-02$ & $h=1.e-03$ & $h=1.e-04$ & $h=1.e-05$ \\
\hline
$\max\{|e_n|\}$ 
& $2.3724e-01$ 
& $1.7138e-02$
& $4.8860e-03$
& $1.5287e-03$
& $4.8291e-04$
\\ \hline
\end{tabular}}
\caption{Numerical results (silkworm population model).}
\label{tab:table2}
\end{table} 
Finally, in Figure~\ref{fig:figure7} we can see the exact solution and the 
predictor using as time step $h=1.e-01$ and, in Figure~\ref{fig:figure8}, 
 $h=1.e-05$.
\begin{figure}[H]
\begin{subfigure}{.5\textwidth}
\centering
\includegraphics[width=1\linewidth]{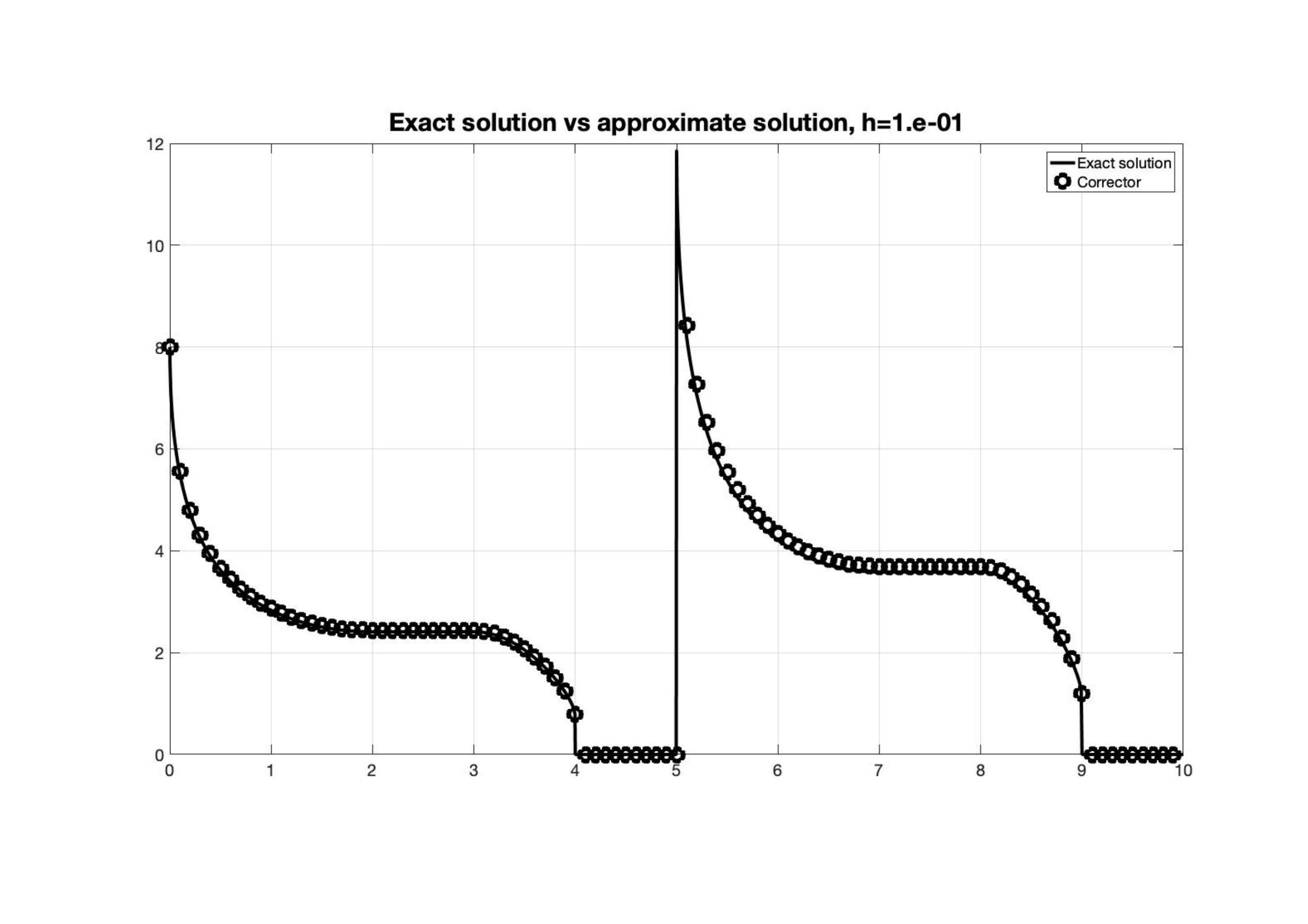}
\caption{$h=1.e-01$.}
\label{fig:figure7}
\end{subfigure}
\begin{subfigure}{.5\textwidth}
\centering
\includegraphics[width=1\linewidth]{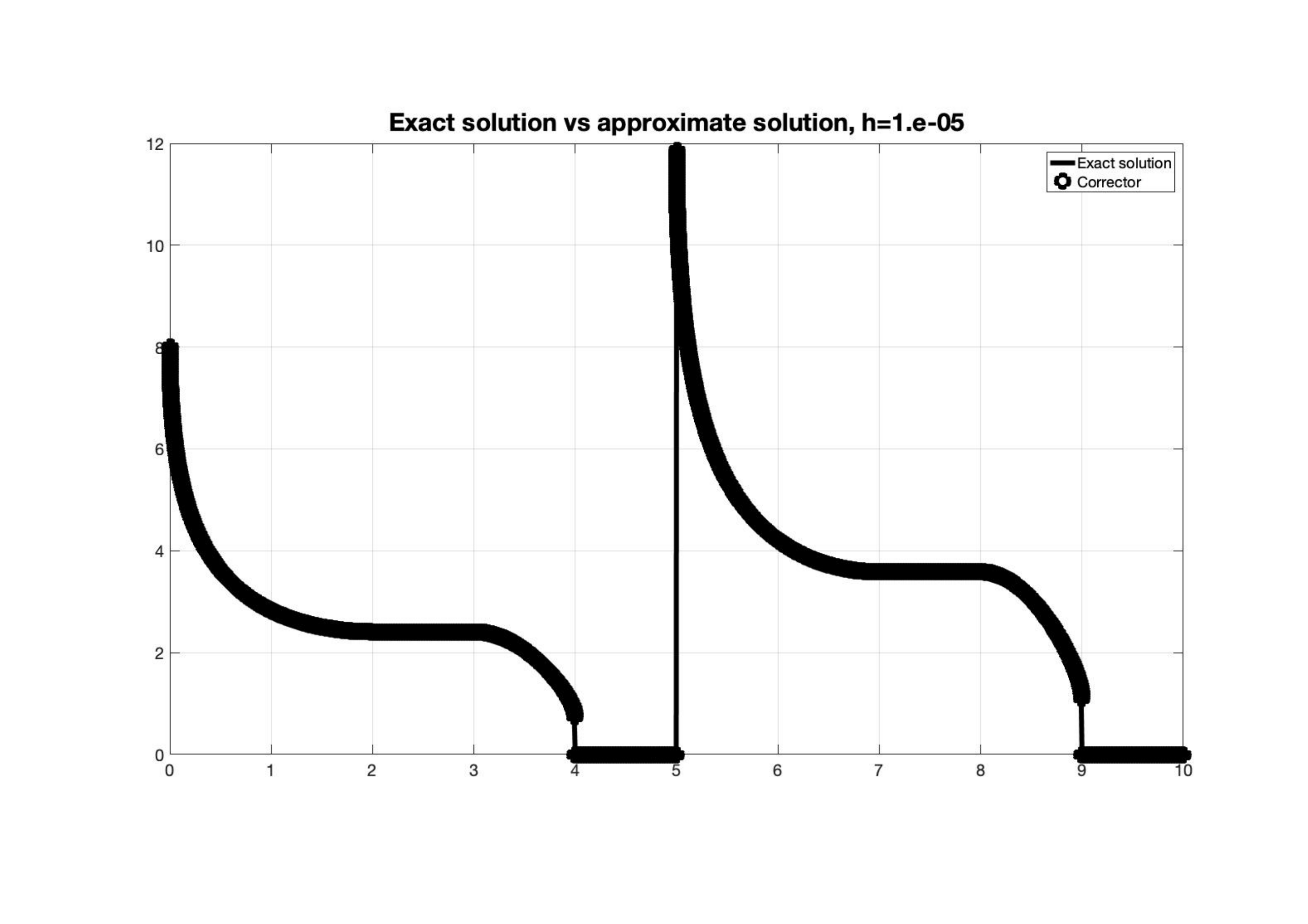}
\caption{$h=1.e-05$.}
\label{fig:figure8}
\end{subfigure}
\caption{Exact solution vs approximate solution.}
\end{figure}

\section*{Acknowledgements}
	
F. Adrián F. Tojo would like to acknowledge his gratitude towards Prof. Stefano Bianchini, whose comments regarding Lipschitz functions allowed to improve Corollary~\ref{corld}.

\bibliography{Bibliografia}

\begin{thebibliography}{1}
\expandafter\ifx\csname url\endcsname\relax
  \def\url#1{\texttt{#1}}\fi
\expandafter\ifx\csname urlprefix\endcsname\relax\def\urlprefix{URL }\fi
\expandafter\ifx\csname href\endcsname\relax
  \def\href#1#2{#2} \def\path#1{#1}\fi

\bibitem{POUSO2015}
R.~L. Pouso, A.~Rodr\'{i}guez, A new unification of continuous, discrete, and
  impulsive calculus through {S}tieltjes derivatives, Real Anal. Exchange 40
  (2014/15) 319--353.

\bibitem{POUSO2017}
M.~Frigon, R.~L. Pouso, Theory and applications of first-order systems of
  {S}tieltjes differential equations, Adv. Nonlinear Anal. 6~(1) (2017) 13--36.

\bibitem{POUSO2018}
R.~L. Pouso, I.~M. Alb\'{e}s, General existence principles for {S}tieltjes
  differential equations with applications to mathematical biology, J.
  Differential Equations 264~(8) (2018) 5388--5407.

\bibitem{FRIGON2019}
M.~Frigon, F.~Tojo, Stieltjes differential systems with non monotonic
  derivators, (submitted) (2019).

\bibitem{MaTo}
I.~{M\'{a}rquez Alb\'{e}s}, F.~A.~F. Tojo,
  \href{https://arxiv.org/abs/2001.00467}{Displacements} (2019).
\newblock \href {http://arxiv.org/abs/2001.00467} {\path{arXiv:2001.00467}}.
\newline\urlprefix\url{https://arxiv.org/abs/2001.00467}

\bibitem{ANTUNES2019}
G.~Monteiro, A.~Slav\'{\i}k, M.~Tvrd\'{y}, Kurzweil-{S}tieltjes integral,
  Vol.~15, World Scientific Publishing Co. Pte. Ltd., Hackensack, NJ, 2019.

\bibitem{DRAGOMIR2011}
S.~S. Dragomir, Approximating the {R}iemann-{S}tieltjes integral by a
  trapezoidal quadrature rule with applications, Math. Comput. Modelling
  54~(1-2) (2011) 243--260.

\bibitem{ISAACSON1996}
E.~Isaacson, H.~B. Keller, Analysis of numerical methods, John Wiley \& Sons,
  Inc., New York-London-Sydney, 1966.

\bibitem{KINCAID1996}
D.~Kincaid, W.~Cheney, Numerical analysis, Brooks/Cole Publishing Co., Pacific
  Grove, CA, 1996.

\end{thebibliography}

\end{document}